\documentclass[reqno]{amsart} \usepackage[centertags]{amsmath}
\usepackage{graphicx} \usepackage{amsfonts} \usepackage{amssymb}
\usepackage{amsthm} \usepackage[top=1in, bottom=1in, left=1in,
right=1in]{geometry}

\usepackage[pdfborder={0 0 0}, pdftitle={SUBDIVISION RULES FOR SPECIAL CUBULATED
GROUPS}, pdfauthor={BRIAN RUSHTON}, pdftex, bookmarks=true,
bookmarksnumbered = true]{hyperref}

\theoremstyle{plain} \newtheorem{thm}{Theorem} \theoremstyle{plain}
\newtheorem{lemma}[thm]{Lemma} \theoremstyle{definition}
\newtheorem*{defi}{Definition} \theoremstyle{remark}
 \theoremstyle{plain}
\newtheorem{cor}[thm]{Corollary}

\newcommand{\parlengths}{\setlength{\parindent}{0pt}}
\setlength{\parskip}{10pt}

\setcounter{tocdepth}{2}

\begin{document} \date{\today}

\pdfbookmark[1]{SUBDIVISION RULES FOR SPECIAL CUBULATED GROUPS}{user-title-page}

\title{Subdivision rules for special cubulated groups}

\author{Brian Rushton} \address{Department of Mathematics, Temple
University, Philadelphia, PA 19122, USA}
\email{brian.rushton@temple.edu}

\begin{abstract} We find explicit subdivision rules for all special cubulated groups. A subdivision
rule for a group produces a sequence of tilings on a sphere which encode all quasi-isometric information for a group. We show how these tilings detect properties such as growth, ends, divergence, etc. We include figures of several worked out examples. \end{abstract}

\maketitle\parlengths \section{Introduction}\label{Introduction}

A subdivision rule is a rule for creating a sequence of tilings of a manifold (usually a sphere), where each tiling is a subdivision of the previous tiling. In this paper, we will find subdivision rules for groups, where the sequence of tilings produced by the subdivision rule will correspond to spheres in the Cayley graph. These subdivision rules for groups are a geometric way of studying the combinatorial
behavior of the groups near infinity. In fact, subdivision rules for groups encode
all the quasi-isometry invariants of the group. In this paper, we obtain
subdivision rules for all right-angled Artin groups and all special cubulated groups.
The sequence of tilings we use will be the tilings that spheres in the universal cover inherit from the walls in wall-structure of the universal cover.

We will define subdivision rules more formally in Section
\ref{FormalSection}, but we introduce the basic notation here. A \textbf{subdivision rule }$R$ acts on
some complex $X$ to produce a sequence of complexes $\{R^n(X)\}_{n=1}^{\infty}$, where
each complex $R^n(X)$ is a \emph{subdivision} of $R^{n-1}(X)$ (meaning that
every cell in $R^{n-1}(X)$ is a union of cells in $R^{n}(X)$). A
\textbf{finite subdivision rule} is one where the subdivision is locally
determined by finitely many \textbf{tile types}, meaning that each cell
of $R^n(X)$ is labelled by a tile type, and that if two cells have the
same tile type, their subdivisions are cellularly isomorphic by a map
that preserves labels. Barycentric subdivision is the classic example.

Our main theorems are the following:

\begin{thm} \label{BiggerTheorem} Every right-angled Artin group has a finite
subdivision rule. \end{thm}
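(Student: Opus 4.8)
The plan is to produce the required tilings geometrically, as the traces of the walls on combinatorial spheres in the universal cover of the Salvetti complex, and then to prove that the refinement carrying one sphere to the next is governed by only finitely much local data.

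First I would fix the defining graph $\Gamma$ and form the Salvetti complex $S_\Gamma$, whose cubes correspond to the cliques of $\Gamma$, i.e.\ to sets of pairwise-commuting generators. Its universal cover $\tilde S_\Gamma$ is a CAT(0) cube complex on which $A_\Gamma$ acts freely and cocompactly, and it carries the wall structure given by its hyperplanes. I would work in the combinatorial metric, in which the distance between two vertices is the number of hyperplanes separating them, so that the ball $B_n$ and the sphere $S_n = \partial B_n$ are purely combinatorial objects. The first concrete task is to describe precisely the tiling that $S_n$ inherits from the walls, taking as tiles the cells cut out on $S_n$ by the hyperplanes together with the cube structure. (For the single-edge graph $\Gamma$, where $A_\Gamma = \mathbb{Z}^2$, this should recover the familiar picture of the $\ell^1$-sphere as a polygon cut out by the coordinate hyperplanes.)

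Second I would define the tile types. The central claim is that a neighborhood of any tile in $S_n$ is determined, up to label-preserving cellular isomorphism, by bounded local data: the clique of $\Gamma$ indexing the cube that contains the tile, together with an orientation recording, for each relevant hyperplane, which side points outward toward $S_{n+1}$. Since $\Gamma$ is finite there are only finitely many cliques and only finitely many such orientation patterns, so this yields a finite list of tile types. I would then verify the defining property of a finite subdivision rule: as the radius increases from $n$ to $n+1$ the sphere is pushed outward across the cubes just beyond it, the newly crossed hyperplanes subdivide each old tile, and this subdivision must depend only on the tile's type, not on its location or on the history of the sphere.

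The step I expect to be the main obstacle is exactly this local determination. A priori the way a tile subdivides could depend on long-range features of $\tilde S_\Gamma$---for instance on a hyperplane that bends back and re-enters the region near the tile---so one must rule such phenomena out. I would control this using the convexity built into a CAT(0) cube complex: hyperplanes are themselves convex and two-sided, each geodesic crosses a given hyperplane at most once, and the halfspace structure makes the notion of ``outward'' globally consistent. This should confine the data relevant to a tile's subdivision to a bounded neighborhood, pinning the behavior down to the finitely many tile types. Finally I would assemble these local subdivisions into a single subdivision rule $R$ for which $R^n$ applied to the initial tiling reproduces $S_n$, and conclude that $A_\Gamma$ admits a finite subdivision rule.
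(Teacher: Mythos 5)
Your starting point---tile the spheres in the universal cover by the traces of the walls, and show the refinement is locally determined---is the same guiding idea as the paper's, but the space you chose to work in creates a gap that the paper goes to considerable lengths to avoid. You propose to tile the combinatorial spheres $S_n$ in the universal cover $\tilde S_\Gamma$ of the Salvetti complex itself. Unless $\Gamma$ is a complete graph, these spheres are not topological spheres and are not even mutually homeomorphic: for a free group $\tilde S_\Gamma$ is a tree and $S_n$ is a growing discrete set, and for $\mathbb{Z}*\mathbb{Z}^2$ the spheres change topology with $n$. Since a subdivision rule requires each $R^{n+1}(X)$ to be a cell structure refining $R^n(X)$ on the \emph{same} underlying space, ``subdivision'' is not even defined in your setting. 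The paper's resolution is to replace the Salvetti complex by a regular neighborhood $T_G\subseteq\mathbb{T}^d$ (the complement of the ridges $H_{ij}$ for non-commuting pairs, then truncated), which is a right-angled manifold with corners: its universal cover has genuine $(d-1)$-sphere boundaries, and the boundary facets become \emph{ideal} tiles that persist without subdividing, absorbing exactly the directions in which your spheres would fall apart. Theorem \ref{BiggerTheorem} then follows by feeding $T_G$ into the general machine of Theorem \ref{BigTheorem}.

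The second, related gap is your claim that ``the newly crossed hyperplanes subdivide each old tile.'' Even in the manifold setting this is false for the naive cell structure: the paper notes explicitly in the 3-torus example that all edges of $S(i,j)$ are eventually covered up by new fundamental domains, so there is no homeomorphism making the full cell structure of one sphere a subdivision of the previous one. The fix is to record only the \emph{flat} ridges (those lying on walls that extend outward), and the bulk of the paper (the gluing, collapse, and coherence lemmas, and the seven-part induction in Theorem \ref{BigTheorem}) is devoted to showing that this restricted structure is well defined, topology-preserving, and genuinely a subdivision with finitely many tile types---the tile types being indexed by facets of the inflation of the fundamental polytope, which for a RAAG correspond to cliques of $\Gamma$, close in spirit to your ``clique plus orientation'' data. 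Your appeal to CAT(0) convexity of hyperplanes is a reasonable instinct for the local-determination step, but it does not by itself supply either the fixed underlying sphere or the correct choice of which ridges to keep.
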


\begin{thm} \label{BiggestTheorem} The fundamental group of a compact special cube complex $X$
has a subdivision rule. If there is a local isometry of $X$ into a Salvetti complex of a RAAG $A$, then the subdivision rule for $A$ contains a copy of the subdivision rule for $X$.  \end{thm}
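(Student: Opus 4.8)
The plan is to reduce everything to the right-angled Artin group case already settled in Theorem \ref{BiggerTheorem}, exploiting the defining feature of special cube complexes: by the work of Haglund and Wise, a compact special cube complex $X$ admits a local isometry $\phi \colon X \to S(A)$ into the Salvetti complex of some RAAG $A$. So I would begin by invoking such a local isometry (for the first sentence one takes the canonical $A$ produced by the hyperplane labeling of $X$; for the second sentence the map is given as a hypothesis). The entire argument then rests on transporting the wall structure, and with it the sphere tilings, from $\widetilde{S(A)}$ down to $\widetilde{X}$.

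The first key step is to lift $\phi$ to universal covers. A local isometry between nonpositively curved cube complexes is $\pi_1$-injective and lifts to an isometric embedding $\widetilde{\phi} \colon \widetilde{X} \to \widetilde{S(A)}$ whose image is a convex subcomplex; I would cite this standard fact, since it is exactly why ``local isometry'' is the right hypothesis. Convexity is the crucial property: it forces combinatorial geodesics of $\widetilde{X}$ to remain geodesics of $\widetilde{S(A)}$, so that the combinatorial distance on $\widetilde{X}$ is the restriction of the one on $\widetilde{S(A)}$. Choosing basepoints that correspond under $\widetilde{\phi}$, the combinatorial sphere $S_n$ of $\widetilde{X}$ is then exactly the intersection $S_n\big(\widetilde{S(A)}\big) \cap \widetilde{X}$.

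Next I would match up the walls. Because $\widetilde{X}$ is convex, each hyperplane of $\widetilde{S(A)}$ meets it in a connected (possibly empty) convex set, so the hyperplanes of $\widetilde{X}$ are precisely the nonempty restrictions of hyperplanes of $\widetilde{S(A)}$, and the resulting correspondence is injective. Therefore the tiling that the walls cut on the sphere $S_n$ of $\widetilde{X}$ is exactly the restriction to $\widetilde{X}$ of the tiling the walls of $\widetilde{S(A)}$ cut on $S_n\big(\widetilde{S(A)}\big)$, and passing from level $n$ to level $n+1$, the subdivision of each tile of $X$ is the restriction of the subdivision of the ambient tile of $A$ from which it is cut. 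This simultaneously produces a subdivision rule on $\widetilde{X}$ (establishing the first sentence) and realizes it as a sub-collection of tile types of the RAAG rule that is closed under the RAAG's subdivision map (establishing the second). Finiteness for $X$ is inherited from Theorem \ref{BiggerTheorem}, with compactness of $X$ guaranteeing that only finitely many local pictures occur.

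The main obstacle I anticipate lies in this last step: one must verify that the collection of tile types actually appearing along spheres in $\widetilde{X}$ is genuinely invariant under the subdivision operation inherited from $A$, so that it forms an honest sub-subdivision-rule rather than merely a set of tiles observed at a single level. Making the tile-type correspondence precise, matching each tile of $X$ with the ambient tile of $A$ it is cut from and checking that this matching commutes with subdivision, is where the real content of the ``contains a copy'' claim resides; the geometric inputs (the local isometry lifts to a convex embedding, and convexity both preserves distances and restricts walls) are standard, but assembling them into a labeled, subdivision-compatible inclusion of finite subdivision rules is what requires care.
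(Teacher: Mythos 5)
Your geometric setup (the Haglund--Wise local isometry, the lift to a convex embedding $\widetilde X \hookrightarrow \widetilde{S(A)}$, and the restriction of combinatorial spheres and walls) matches the paper's starting point, but you leave unproved exactly the step you yourself flag as ``the main obstacle,'' and that step is the real content of the theorem. The paper does not restrict the wall tiling to $\widetilde X$; it keeps the full subdivision rule $R$ for $A$ acting on the same sphere and declares ideal every tile whose corresponding basepoint lift fails to lie in $\widetilde X$. For this to define a subdivision rule one must show that ideal tiles never subdivide into non-ideal ones, i.e., that the induced subgraph $H$ of the history graph on the vertices corresponding to $\pi_1(X)\subseteq A$ is star convex: if $h\in\pi_1(X)$, then the predecessor of $h$ in the history graph also has its basepoint lift in $\widetilde X$. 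The paper proves this by writing $h$ in its normal form $t_1t_2\cdots t_n$ as a product of diagonal generators of maximal spherical subgroups, observing that the history-graph predecessor is precisely $\hat h=t_2\cdots t_n$, that $\hat h\tilde b_0$ lies on the geodesic from $\tilde b_0$ to $h\tilde b_0$, and that convexity of $\widetilde X$ then forces $\hat h\tilde b_0\in\widetilde X$. Convexity alone --- the only tool you invoke --- tells you that spheres and walls restrict, but it does not identify \emph{which} level-$n$ tile a given level-$(n+1)$ tile descends from, nor show that this specific ancestor stays in $\widetilde X$; the algebraic normal form is what supplies that.

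The second gap is finiteness. You assert that finiteness is ``inherited from Theorem \ref{BiggerTheorem}, with compactness of $X$ guaranteeing that only finitely many local pictures occur,'' but this does not follow: even though the ambient rule for $A$ has finitely many tile types, the tile types of the sub-rule are governed by the cone types of the subgraph $H$, and a priori there could be infinitely many of these. The paper closes this by proving $\mathrm{cone}_H(h)=\mathrm{cone}_K(h)\cap H$ (using star convexity again, where $K$ is the ambient history graph) and then combining the finitely many cone types of $K$ with the finitely many $\pi_1(X)$-orbits of basepoint lifts, which is where compactness actually enters, via cocompactness of the action on $\widetilde X$. Both arguments need to be supplied before your outline becomes a proof.
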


These theorems are proved in Sections \ref{RAAGSection} and \ref{CubulatedSection}, respectively. They have two main consequences: 

First, the subdivision rules defined in this paper can be used to study quasi-isometry properties
of special cubulated groups, as described in the next section.

Second, they provide a rich family of explicit examples of subdivision rules for hyperbolic groups. Such subdivision rules have been studied extensively \cite{conformal, Rich, subdivision, hyperbolic, Combinatorial}, but there have been very few explicit examples \cite{PolySubs}.

\subsection{Subdivision rules and quasi-isometry invariants} Cannon was
the first to study the quasi-isometry properties of groups using
subdivision rules. He showed that a $\delta$-hyperbolic group with a
2-sphere at infinity is quasi-isometric to hyperbolic 3-space if and
only if its subdivision rules are `conformal', meaning that tiles do not
become distorted under subdivision \cite{conformal,hyperbolic}.

The subdivision rules created in this paper can also be used to study
quasi-isometry invariants of groups. For every subdivision rule on a manifold $X$, we can
construct a graph called the \textbf{history graph} which will be quasi-isometric to the Cayley graph of the group it
is associated to. The history graph is the disjoint union of the dual
graphs of $R^n(X)$ for $n=0,1,2,...$, together with a collection of
edges which connect each vertex of the dual graph of $R^n(X)$ (where
each vertex corresponds to a cell of highest dimension) to each vertex
corresponding to cells contained in its subdivision in $R^{n+1}(X)$.

 To provide more variety in possible history graphs, we may consider some of the tile types to be \textbf{ideal
cells}, meaning that they are not assigned vertices in the history graph. Ideal cells only subdivide into other ideal cells, and frequently will never subdivide at all.

One example is the `middle thirds' subdivision rule commonly used to
create a Cantor set. There are two tile types, $A$ and $B$, and we consider $B$ as `ideal' (see Figure \ref{MiddleThirds}). Its
history graph is a trivalent tree except for a single vertex of valence 2 (see
Figure \ref{CantorHistoryGraph}).

 \begin{figure}
\scalebox{.35}{\includegraphics{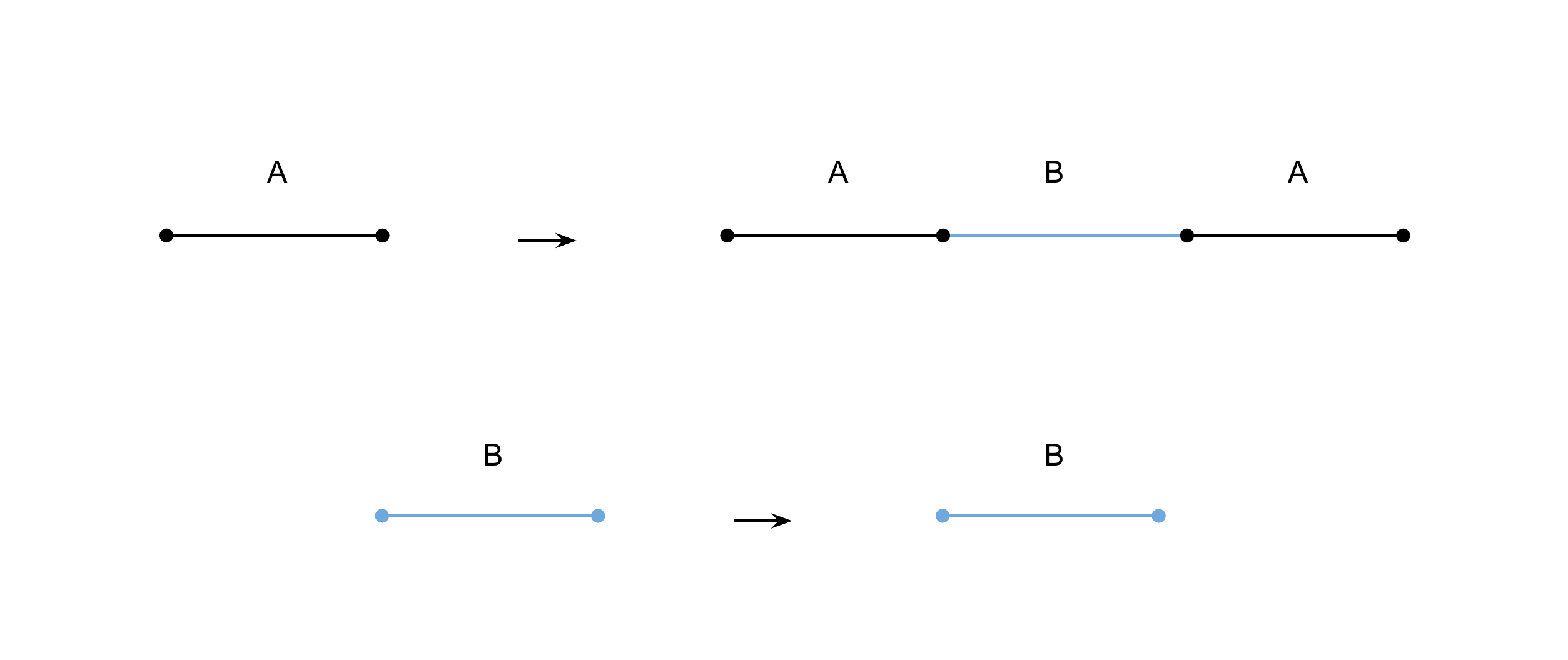}}
 \caption{The middle thirds subdivision rule
for the Cantor set. Type B is ideal.}
\label{MiddleThirds} \end{figure}
 \begin{figure}
 \scalebox{.35}{\includegraphics{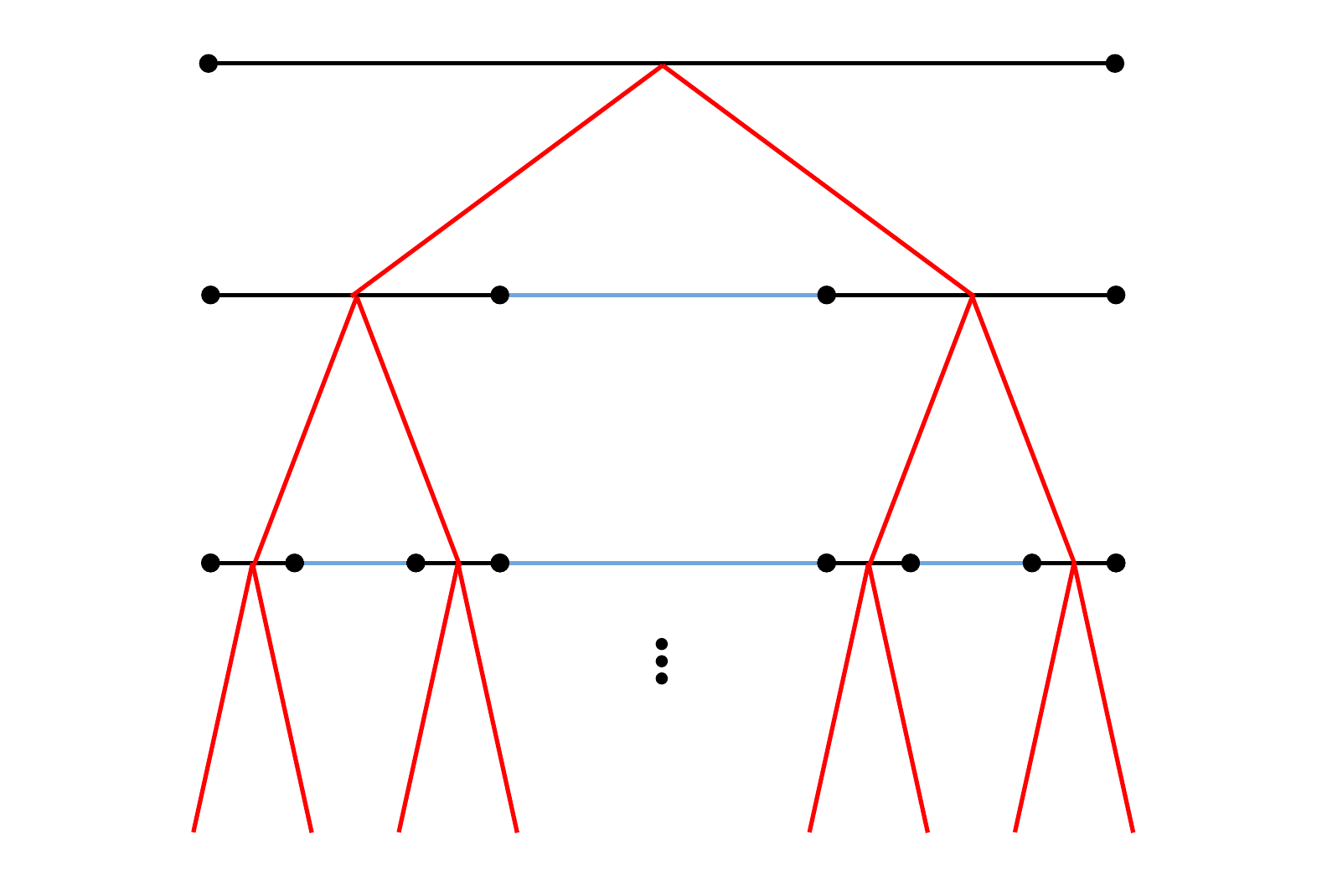}}
 \caption{The history graph of the middle
thirds Cantor set is an infinite trivalent tree (except for one vertex
at the top).}\label{CantorHistoryGraph}
\end{figure}

It is a consequence of Theorem \ref{BigTheorem}, stated in Section \ref{OutlineSection}, that the history graph of the subdivision rules for right-angled Artin groups (and cubulated groups) obtained in this paper are quasi-isometric to the Cayley graphs of those groups. For such groups, we have the following:

\begin{thm}\label{Quasi1Thm}
The growth function of $G$ is the number of non-ideal tiles in $R^n(X)$.
\end{thm}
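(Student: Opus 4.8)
The plan is to leverage the relationship between the history graph and the Cayley graph. By Theorem \ref{BigTheorem} (stated later but available to us), the history graph of the subdivision rule is quasi-isometric to the Cayley graph of $G$, and more strongly, the vertices of the dual graph of $R^n(X)$ correspond precisely to elements of $G$ at a fixed combinatorial distance $n$ from the identity (or within distance $n$, depending on the exact construction). The key observation is that non-ideal tiles are exactly those assigned vertices in the history graph, while ideal cells are not. So the claim reduces to establishing a bijection between non-ideal tiles in $R^n(X)$ and group elements counted by the growth function at level $n$.

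\emph{First} I would recall the precise definition of the growth function: $\gamma(n)$ counts the number of group elements $g \in G$ with word length exactly $n$ (or at most $n$) with respect to a fixed generating set. \emph{Second}, I would invoke the construction of the subdivision rule from the wall-structure of the universal cover: the tiles of $R^n(X)$ are the cells of a tiling of the sphere of radius $n$ in the Cayley graph (or its associated CAT(0) cube complex), inherited from the wall-pattern. \emph{Third}, I would argue that the non-ideal top-dimensional cells of $R^n(X)$ are in natural bijection with the group elements at the appropriate distance, since the history graph identifies each non-ideal tile with a vertex (a group element) and this identification is exactly the data that makes the history graph quasi-isometric — indeed isomorphic on the vertex level — to the Cayley graph's sphere structure.

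The main technical content, and the step I expect to be the central obstacle, is verifying that the \textbf{counting} is exact rather than merely coarse. Quasi-isometry only guarantees a bounded-distortion correspondence, which is far too weak to pin down the growth function on the nose. I would therefore need to extract from the construction the sharper fact that the vertex set of the dual graph of $R^n(X)$ (equivalently, the set of non-ideal tiles) is in \emph{bijection} with the sphere (or ball) of radius $n$ in the Cayley graph. This hinges on the feature that the history graph is not just quasi-isometric but built level-by-level to mirror the Cayley graph: each non-ideal tile at level $n$ corresponds to one group element, and the ideal cells are precisely the combinatorial artifacts (such as the wall-crossings that do not add new group elements) that must be discarded. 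Care is needed because ideal cells can subdivide and persist across levels, so I would confirm that they never contribute a spurious non-ideal tile and never absorb a genuine group element.

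Once this exact level-wise bijection is in hand, the proof concludes immediately: counting non-ideal tiles in $R^n(X)$ counts exactly the group elements that the growth function enumerates at level $n$, so the two quantities coincide. I would close by remarking that whether one obtains the spherical growth function or the cumulative (ball) growth function depends on the convention for which cells at earlier levels are retained versus subdivided, and I would state explicitly which convention the construction of Sections \ref{RAAGSection} and \ref{CubulatedSection} adopts so that the identity in the theorem is unambiguous.
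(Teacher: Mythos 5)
Your proposal takes essentially the same route as the paper: the paper's entire proof is the one-line observation that, by construction, the non-ideal cells of $R^n(X)$ are in bijection with the vertices of the sphere of radius $n$ in the history graph, whose vertices are exactly the group elements. Your additional caution that quasi-isometry alone is too coarse and that an exact level-wise bijection is what does the work is correct and is precisely what the paper's ``by construction'' is implicitly invoking.
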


\begin{thm}\label{Quasi2Thm}
If the subdivision rule $R$ has mesh approaching 0 (meaning that each path crossing non-ideal tiles eventually gets subdivided), then the group $G$ is $\delta$-hyperbolic.
\end{thm}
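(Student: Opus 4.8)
The plan is to prove Theorem~\ref{Quasi2Thm} by showing that the history graph (which by Theorem~\ref{BigTheorem} is quasi-isometric to the Cayley graph of $G$) satisfies the thin-triangles condition, or equivalently that it is a Gromov-hyperbolic space. The key geometric intuition is that the history graph is organized into ``levels'' indexed by $n$, where level $n$ consists of the dual graph of $R^n(X)$, and the vertical edges connecting consecutive levels make the whole graph look like a cone or tree over the sequence of tilings. The mesh-approaching-$0$ hypothesis is exactly what forces this cone structure to be \emph{exponentially contracting}, which is the hallmark of negative curvature.

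First I would set up coordinates on the history graph: every vertex lies at some level $n$, and I would define a ``vertical'' distance (difference in levels) and a ``horizontal'' distance (roughly, the combinatorial distance within a single tiling $R^n(X)$). The central lemma to establish is that the mesh condition gives uniform control: if a path in $R^n(X)$ crosses some bounded number of non-ideal tiles, then after a bounded number $k$ of subdivisions, the image of that path crosses \emph{strictly more} tiles, i.e.\ horizontal distances grow as we descend in $n$ and shrink as we ascend. Concretely, I would show there is a constant $\lambda > 1$ and an integer $k$ so that two cells which are horizontally close at level $n$ have preimages (ancestors) that become horizontally closer, at a geometric rate, as we move up toward level $n-k$. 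This is the ``mesh approaching $0$'' hypothesis translated into a contraction estimate, and it is the technical heart of the argument.

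Given such a contraction estimate, I would then invoke a standard hyperbolicity criterion for graphs equipped with this kind of layered/horofunction structure. The natural tool is the characterization of hyperbolicity via exponential divergence of geodesics, or alternatively to compare the history graph directly to a hyperbolic model space: the contraction estimate lets me build, for any two vertices, a path that goes ``up'' to a common ancestor level and back ``down,'' and the geometric shrinking of horizontal distance upon ascent bounds the length of this path logarithmically in the horizontal separation. One then checks that geodesics are uniformly close to these up-and-over paths and that triangles formed by them are uniformly thin; this is the same mechanism by which a tree, or the combinatorial model of $\mathbb{H}^n$ coming from a conformal subdivision rule in Cannon's work \cite{conformal, hyperbolic}, is hyperbolic.

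The main obstacle I anticipate is making the contraction estimate uniform and quantitative: the hypothesis as stated (``each path crossing non-ideal tiles eventually gets subdivided'') is an asymptotic, qualitative condition, whereas hyperbolicity requires \emph{uniform} exponential control with constants independent of the starting level and location. Promoting ``eventually subdivided'' to ``subdivided within a bounded number of steps with a definite multiplicative gain'' will require using the finiteness of the subdivision rule, i.e.\ that there are only finitely many tile types and hence finitely many local subdivision patterns. I would exploit this finiteness, perhaps via a compactness or pigeonhole argument on the finite set of tile types, to extract the needed uniform constants $\lambda$ and $k$, and care must be taken to handle ideal tiles correctly since paths through them are not counted and must not be allowed to ``shortcut'' the contraction.
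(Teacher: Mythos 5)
Your proposal and the paper take genuinely different routes, though for an unusual reason: the paper does not prove Theorem~\ref{Quasi2Thm} at all, but simply cites it as Theorem~6 of \cite{myself2}. What you have sketched is, in effect, a reconstruction of the argument that such a citation must contain: identify the history graph (quasi-isometric to $G$) as a layered graph over the tilings $R^n(X)$, promote the qualitative hypothesis ``every path crossing non-ideal tiles eventually gets subdivided'' to a uniform statement ``there exist $k\in\mathbb{N}$ and $\lambda>1$ such that horizontal distances grow by a factor of $\lambda$ every $k$ levels'' via the finiteness of the set of tile types, and then conclude hyperbolicity from the resulting exponential divergence of levels, with geodesics tracked by up-and-over paths as in the hyperbolic cone constructions of Cannon and of Bonk--Schramm type. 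This is the standard mechanism, you have correctly located the crux (the pigeonhole/compactness step that makes the contraction uniform), and you have correctly flagged the one genuinely delicate point, namely that ideal tiles carry no vertices in the history graph and so must not create shortcuts in the horizontal metric. What the paper's approach buys is brevity and a guarantee that the details have been checked elsewhere; what yours buys is a self-contained argument whose hypotheses are visibly matched to the definitions in this paper (in particular to the history-graph construction of Section~\ref{Introduction}).

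That said, as written your text is a plan rather than a proof: the two load-bearing steps --- the uniform contraction lemma and the verification that geodesic triangles are uniformly thin (equivalently, that arbitrary geodesics fellow-travel the up-and-over paths) --- are asserted as things you ``would show.'' Neither step is likely to fail, but the second in particular requires a real argument (one must rule out geodesics that linger at a fixed level for a long horizontal stretch, which is exactly where the exponential growth of horizontal distance is used). Until those two lemmas are written out, the proposal should be regarded as a correct outline of the intended proof rather than a complete alternative to the paper's citation.
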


\begin{thm}\label{Quasi3Thm}
The number of ends of $G$ is the same as the number of components of $X\setminus \mathop{\bigcup}\limits_n R_I^n(X)$, where $R^n_I$ is the union of the ideal tiles of $R^n(X)$.
\end{thm}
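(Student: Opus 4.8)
The plan is to combine the quasi-isometry supplied by Theorem \ref{BigTheorem} with a direct computation of the ends of the history graph. Since the number of ends is a quasi-isometry invariant and Theorem \ref{BigTheorem} gives that the history graph $H$ of $R$ is quasi-isometric to the Cayley graph of $G$, it suffices to count the ends of $H$. First I would record that each complex $R^n(X)$ is finite, so that the finite subgraphs $H_{\le N}$ formed by the dual graphs of $R^0(X),\dots,R^N(X)$ together with the connecting edges up to level $N$ exhaust $H$. Consequently the ends of $H$ are computed as $\varprojlim_N \pi_0\big(H \setminus H_{\le N}\big)$, that is, by the unbounded components of the part of $H$ lying above level $N$, taken in the inverse limit.

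The heart of the argument is to identify these unbounded components with the components of $Y := X \setminus \bigcup_n R_I^n(X)$. The dictionary is that a vertex of $H$ at level $n$ is by definition a non-ideal tile of $R^n(X)$, that two such vertices are adjacent in the dual graph exactly when the tiles share a codimension-one face, and that the connecting edges record the subdivision nesting $t_{n+1}\subseteq t_n$. Thus connectivity in the level-$n$ dual graph of non-ideal tiles reflects path-connectivity of the union of the non-ideal tiles of $R^n(X)$, which is $X \setminus R_I^n(X)$. I would make this precise by proving, for each $N$, a bijection between the unbounded components of $H \setminus H_{\le N}$ and the components of $X\setminus R_I^N(X)$ that are not eventually absorbed into the ideal region, and by checking that these bijections commute with the restriction maps on both sides. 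Passing to the limit and using $\bigcap_n \big(X \setminus R_I^n(X)\big) = Y$, the inverse limit of the component sets is exactly $\pi_0(Y)$.

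Concretely I would build the correspondence in both directions. Given a component $C$ of $Y$, each point of $C$ lies for every $n$ in a unique non-ideal tile of $R^n(X)$, and the set of all non-ideal tiles meeting $C$ spans a connected subgraph of $H$ reaching arbitrarily high levels, hence determines an end $e(C)$. Conversely, from an unbounded component I would extract a nested sequence of non-ideal tiles $t_0 \supseteq t_1 \supseteq \cdots$ and, using finiteness of each level and the finite-intersection property, produce a nonempty limit set $\bigcap_n t_n$; because every $t_n$ is non-ideal, this limit set avoids $\bigcup_n R_I^n(X)$ and hence lies in a single component of $Y$, giving the inverse map. It then remains to verify that the two assignments are mutually inverse.

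The step I expect to be the main obstacle is the boundary bookkeeping relating combinatorial adjacency in $H$ to topological connectivity in $Y$: two non-ideal tiles may meet only along a cell of codimension greater than one, so that they are not adjacent in the dual graph even though their union is connected, and points on the frontier between the non-ideal and ideal regions force a choice of convention about membership in $Y$. I would resolve this by taking the $R_I^n(X)$ to be closed, so that $Y$ is open, and then using compactness: a path joining two points of a single component $C$ lies in the open set $Y$ and therefore stays a positive distance from the ideal region, so at a sufficiently fine level it is covered by a chain of non-ideal tiles whose consecutive members overlap in their interiors and hence share a codimension-one face, yielding a genuine horizontal path in $H$. Dually, a non-ideal thread that is eventually swallowed by the ideal region corresponds to a finite, hence bounded, component that contributes no end, matching a point of $\bigcup_n R_I^n(X)$ excluded from $Y$. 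The Cantor set example of the introduction is the model case: there $Y$ is essentially the Cantor set, its components are its points, and these are in bijection with the ends of the trivalent history tree.
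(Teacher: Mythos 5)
Your overall strategy is the same as the paper's: both arguments identify components of $Y=X\setminus\bigcup_n R_I^n(X)$ with ends of the history graph, using the dictionary between non-ideal tiles of $R^n(X)$ and vertices of the sphere of radius $n$, the path-connectedness of components of $X\setminus R_I^n(X)$ to produce horizontal connecting paths, and the separation of distinct components at a finite stage (together with projection of avoidant paths onto the sphere of radius $n$) to distinguish ends. Your forward direction --- a component of $Y$ determines an end, and distinct components determine distinct ends --- is essentially the paper's proof, and your attention to the codimension-two adjacency issue is a legitimate refinement of a point the paper glosses over.

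The gap is in your inverse map. Under your chosen convention that the $R_I^n(X)$ are closed, the claim that ``because every $t_n$ is non-ideal, this limit set avoids $\bigcup_n R_I^n(X)$'' is false: a point can lie in a non-ideal tile at every level and simultaneously on the frontier of a closed ideal tile. In your own model case (Figure \ref{MiddleThirds}) the nested sequence of non-ideal tiles $[0,1/3]\supset[2/9,1/3]\supset[8/27,1/3]\supset\cdots$ intersects to $\{1/3\}$, which lies in the closed ideal tile $[1/3,2/3]$ and hence outside $Y$; so the end determined by this sequence is assigned no component, and the asserted bijection between ends of the trivalent tree and components of $Y$ fails on the countably many ends converging to endpoints of removed intervals. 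Two ways out: (i) take $R_I^n(X)$ to be the union of the \emph{open} ideal cells, so that frontier points remain in $Y$ and your intersection argument goes through (you must then recheck the chain-of-tiles argument with $Y$ closed, which still works since the union of closed non-ideal tiles is locally path connected); or (ii) keep your convention and argue separately that only countably many ends are unaccounted for, which does not change the answer because the number of ends of a group is $0$, $1$, $2$, or $2^{\aleph_0}$ and deleting countably many points from the relevant perfect set does not change its number of components --- but that is an extra argument you have not supplied. For what it is worth, the paper's own proof only establishes that the map from components to ends is well defined and injective and is silent on surjectivity, so the direction in which your gap occurs is one the paper does not address at all.
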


\begin{thm}\label{Quasi4Thm}
If $R$ is a subdivision rule acting on a space $X$ associated to a group $G$, then the diameter $diam_X(n)$ of $R^n(X)$ is an upper bound on the divergence of $G$, i.e. $div_G(n)\preceq diam_X(n)$. Conversely, if there are 2 geodesics $\alpha,\beta$ in the history graph of $\{R^n(X)\}$ realizing $diam_X(n)$ (i.e. with $d_{R^n(X)}=diam_X(n)$) then $diam_X(n)\preceq div_G(n)$.
\end{thm}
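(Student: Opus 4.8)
The plan is to transport everything to the history graph $\Gamma$ of $\{R^n(X)\}$, which is quasi-isometric to the Cayley graph of $G$ by Theorem \ref{BigTheorem}, and to exploit the fact that the $n$-th level of $\Gamma$—the dual graph of $R^n(X)$, with intrinsic metric $d_{R^n(X)}$—corresponds to the sphere of radius $n$ about the identity $e$. Under this correspondence the ball $B(e,n)$ matches the union of levels $0,\dots,n$, so a path in $G$ that avoids the open ball $B(e,n)$ corresponds to a path in $\Gamma$ that never drops below level $n$. By its quasi-isometry invariance, $div_G(n)$ is, up to $\asymp$-equivalence, the supremum over admissible pairs $x,y$ at distance $n$ from $e$ of the length of the shortest such detour from $x$ to $y$.

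For the upper bound $div_G(n)\preceq diam_X(n)$, I would take an arbitrary admissible pair $x,y$ at distance $n$ from $e$, regarded as tiles of $R^n(X)$, and join them by a geodesic in the dual graph of $R^n(X)$. Such a geodesic has length at most $diam_X(n)$ and stays at level exactly $n$, hence is a legitimate detour avoiding the open ball $B(e,n)$. Taking the supremum over $x,y$ and absorbing the quasi-isometry constants yields $div_G(n)\preceq diam_X(n)$.

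For the converse I would push a divergence detour back down onto the sphere. Because $R^k(X)$ is a subdivision of $R^n(X)$ for every $k\ge n$, each tile of $R^k(X)$ is contained in a unique tile of $R^n(X)$; this containment defines a projection $\pi$ from the union of all levels $\ge n$ of $\Gamma$ onto the dual graph of $R^n(X)$. The key point is that $\pi$ is coarsely Lipschitz: if two tiles of $R^k(X)$ share a codimension-one face, then the containing tiles of $R^n(X)$ are either equal (the shared face lies in the interior of a level-$n$ tile) or adjacent (it lies on a level-$n$ wall), so adjacent vertices map to vertices at distance at most $1$, while vertical edges collapse. Now let $\alpha,\beta$ be the history-graph geodesics of the hypothesis, with endpoints $x=\alpha(n)$, $y=\beta(n)$ the diametric pair, $d_{R^n(X)}(x,y)=diam_X(n)$. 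Since $\alpha,\beta$ are genuine geodesics from the basepoint, $x,y$ form an admissible pair for $div_G(n)$; applying $\pi$ to a shortest detour $P$ from $x$ to $y$ produces a path in $R^n(X)$ from $x$ to $y$ of $d_{R^n(X)}$-length at most the length of $P$, while that length is at least $d_{R^n(X)}(x,y)=diam_X(n)$. Hence $diam_X(n)\le \mathrm{length}(P)$, and since $\mathrm{length}(P)$ is the divergence of an admissible pair it is bounded by $div_G(n)$, giving $diam_X(n)\preceq div_G(n)$.

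The main obstacle will be the converse, in two respects. First, one must ensure the shortest detour $P$ genuinely stays at levels $\ge n$ so that $\pi$ is defined along all of it; this forces one to fix the divergence convention (the ball radius, Gersten's $B(e,\delta n)$ versus $B(e,n)$) compatibly with the level structure and to track the additive and multiplicative distortion of the quasi-isometry from $\Gamma$ to the Cayley graph of $G$, so that ``level $\ge n$'' and ``outside $B(e,n)$'' agree up to bounded error. Second, the hypothesis that the diametric pair is realized by genuine geodesics $\alpha,\beta$ is exactly what blocks the pathology that would otherwise defeat the converse: without it, two tiles that are $diam_X(n)$ apart on the sphere could be joined cheaply by a path dipping inward through $B(e,n)$ to a coarser level and back out, leaving $div_G(n)$ small while $diam_X(n)$ is large. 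Verifying that $\pi$ is coarsely Lipschitz uniformly across all tile types of the cubulated subdivision rule is the remaining technical core.
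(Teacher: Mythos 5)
Your proposal is correct and follows essentially the same route as the paper: both reduce to the history graph $\Gamma$, identify the sphere of radius $n$ with the dual graph of $R^n(X)$, and use the ancestor/containment projection onto that level (which collapses vertical edges and is non-increasing on horizontal ones) to show avoidant paths may be taken to live in $\Gamma_n$, yielding the two inequalities. The only cosmetic difference is organizational—you invoke the projection only for the converse and use the trivial ``some avoidant path lies on the sphere'' bound for the forward direction, whereas the paper proves the projection claim up front and uses it for both.
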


The exact definitions of the terms used in these theorems as well as their proofs can be found in Section \ref{QuasiProofs}.
These four properties (growth, hyperbolicity, ends, and divergence) are among the most commonly used quasi-isometry invariants of groups:
\begin{enumerate}

\item Growth can be used to study the algebraic structure of groups; in fact, Gromov's theorem on groups of polynomial growth shows that such groups must be virtually nilpotent \cite{Gromov1981groups}. 

\item Although the only hyperbolic RAAG's are free groups (as all others contain a copy of $\mathbb{Z}^2$), hyperbolic subgroups of RAAG's are plentiful \cite{Agol}. One can begin looking for hyperbolic subgroups of RAAG's by searching for portions of the subdivision rule where the combinatorial mesh goes to 0. However, the converse of Theorem \ref{Quasi2Thm} is not true; hyperbolic groups can have subdivision rules with combinatorial mesh not approaching 0 (as in Section 2.6 of \cite{myself2})

\item The number of ends of a group will distinguish free groups and elementary groups (i.e. 2-ended groups) from other groups. Stallings' theorem \cite{stallings1971group} shows that groups with infinitely many ends are either free products with amalgamation over finite groups, or HNN extensions over finite subgroups.

\item Divergence is a somewhat newer invariant \cite{Gersten}, which is frequently useful in distinguishing groups when the simpler invariants fail. It essentially measures the growth in circumference of spheres of radius $n$ in the space.
\end{enumerate}

In all of these theorems, a quasi-isometry invariant of $G$ can be detected by counting tiles in the subdivisions of $X$. In general, subdivision rules will give a combinatorial version of every quasi-isometry invariant, but it may take a more complicated form than those given here.

These theorems do not allow us to distinguish between all quasi-isometry types of right-angled Artin groups or cubulated groups. The quasi-isometric classification of right-angled Artin groups is not yet known, although much progress has been made in recent years \cite{behrstock2012divergence, behrMann, behrMannJan, AsympGeom}. But Theorems \ref{Quasi3Thm} and \ref{Quasi4Thm} allow us to distinguish RAAG's with infinitely many ends or with linear divergence, which correspond to RAAG's that are free products or direct products, respectively\cite{behrstock2012divergence}. It is unknown what more complicated quasi-isometry invariants have simple analogues in subdivision rules.

As an sample application of these theorems, we show that special cubulated groups have a \textbf{growth dichotomy}:
\begin{thm}
If $G$ is a special cubulated group, then its growth is either exponential or polynomial.
\end{thm}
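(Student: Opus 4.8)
The plan is to reduce the dichotomy to the asymptotics of the powers of a single nonnegative integer matrix. By Theorem~\ref{Quasi1Thm}, the growth function of $G$ equals the number of non-ideal tiles of $R^n(X)$, so it suffices to control that count as $n\to\infty$. Since the subdivision rule produced by Theorem~\ref{BiggestTheorem} is finite, there are finitely many tile types; let $t_1,\dots,t_m$ be the non-ideal ones. I would encode the rule in the \textbf{subdivision matrix} $N=(N_{ij})_{1\le i,j\le m}$, where $N_{ij}$ is the number of non-ideal tiles of type $t_j$ appearing in the subdivision of a tile of type $t_i$. Because an ideal tile subdivides only into ideal tiles, the non-ideal tile count at stage $n$ is unaffected by the ideal types, so $N$ records everything we need. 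Letting $v_n\in\mathbb{Z}^m_{\ge 0}$ count the non-ideal tiles of each type in $R^n(X)$, we have $v_n=(N^{\mathsf T})^{n}v_0$ for the initial vector $v_0$, and the growth function is $\mathbf 1^{\mathsf T}v_n=\mathbf 1^{\mathsf T}(N^{\mathsf T})^{n}v_0$.

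The second step is to read the growth of $\mathbf 1^{\mathsf T}(N^{\mathsf T})^{n}v_0$ off the spectrum of $N$. Passing to Jordan form, each entry of $N^n$ is a finite sum of terms $p(n)\mu^n$ with $\mu$ an eigenvalue of $N$ and $\deg p$ less than the size of the associated Jordan block; hence the tile count grows like $n^{d}\lambda^{n}$, where $\lambda=\rho(N)$ is the spectral radius and $d+1$ is the largest size of a Jordan block attached to an eigenvalue of modulus $\lambda$. By Perron--Frobenius, $\lambda$ is itself an eigenvalue of $N$ with a nonnegative eigenvector, and since $v_0$ and $\mathbf 1$ are nonnegative and the relevant strongly connected component of $N$ is reachable from the initial cells, this leading term is genuinely present; we therefore obtain matching upper and lower bounds, not merely an upper bound.

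Finally I would invoke the arithmetic fact that forces the dichotomy: the spectral radius of a nonnegative integer matrix lies in $\{0\}\cup[1,\infty)$. Indeed, if $N$ is not nilpotent then some power $N^p$ has a positive diagonal entry, so $\mathrm{tr}(N^{pq})\ge 1$ for every $q$; as this trace equals $\sum_i\mu_i^{pq}$ and is bounded in absolute value by $m\lambda^{pq}$, a spectral radius $\lambda<1$ would force the positive integer $\mathrm{tr}(N^{pq})$ to tend to $0$, a contradiction, whence $\lambda\ge 1$. Three cases then result. If $\lambda=0$ then $N$ is nilpotent, the non-ideal tiles eventually disappear, and $G$ is finite (constant, hence polynomial, growth). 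If $\lambda=1$ the count is $\asymp n^{d}$, so the growth is polynomial. If $\lambda>1$ the count is squeezed between $c\,\lambda^{n}$ and $C\,n^{d}\lambda^{n}$, so the growth is exponential. The main obstacle is the sharpness underlying the dichotomy: one must establish the spectral gap $\rho(N)\notin(0,1)$ and, via Perron--Frobenius together with the nonnegativity and reachability of $v_0$, confirm that the dominant eigenvalue really contributes a lower bound and not just an upper bound. The remaining bookkeeping with ideal tiles and the finitely many initial cells is routine.
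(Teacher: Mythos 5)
Your proposal takes essentially the same route as the paper: both reduce the growth function to the non-ideal tile count of $R^n(X)$ via Theorems \ref{BiggestTheorem} and \ref{Quasi1Thm}, observe that these counts satisfy a linear recurrence governed by the nonnegative integer subdivision matrix, and conclude by linear algebra. The paper delegates the last step to ``standard linear algebra techniques'' in a citation, whereas you correctly supply the needed ingredients explicitly --- the trace argument showing the spectral radius of a non-nilpotent nonnegative integer matrix is at least $1$, and the Perron--Frobenius/reachability point guaranteeing the dominant term actually appears --- so this is a valid filling-in of the same argument rather than a different one.
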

\begin{proof}
By Theorems \ref{BiggestTheorem} and \ref{Quasi1Thm}, the growth of $G$ is given by the number of tiles in $R^n(X)$ for some subdivision rule $R$ acting on a complex $X$. The number of tiles in each stage is given by a linear recurrence relation, i.e. if $N(i,j)$ is the number of tiles of type $i$ at stage $j$, then $N(i,j+1)=C_{i1}N(1,j)+C_{i2}N(2,j)+\cdots+C_{ik}N(k,j)$, where the $C_{ij}$ are independent of $j$. Such a linear recurrence relation can always be solved by standard linear algebra techniques (such as those in Section 5.1 of \cite{myself}) to give a function of either polynomial growth or exponential growth.
\end{proof}
This result was first proved by Hsu and Wise \cite{hsu1999linear} when they showed that all special cubulated groups are linear. Linear groups were previously known to have a growth dichotomy by work of Tits \cite{tits1972free}.

It is not known if the divergence of special cubulated groups has a similar dichotomy, although it is known that they can have exponential divergence or polynomial divergence of any given degree \cite{behrstock2012cubulated}.

\subsection{Future work}
These theorems can be pushed farther than we have done in this work. The author and David Futer have been able to show that there is a continuous group action on the spherical subdivision complex which generalizes the action of a hyperbolic group on its boundary, with analogs of the domain of discontinuity and limit set. This action has several other properties similar to the action of hyperbolic groups.

\subsection{Acknowledgements}

The author thanks David Futer for his careful reading of several drafts and for his numerous helpful suggestions.

\subsection{Outline}\label{OutlineSection}

We now give an outline of the remainder of the paper. The core theorem (whose terms will be defined in later sections) is the following:

\newtheorem*{BigTheorem}{Theorem \ref{BigTheorem}}
\begin{BigTheorem} Let $M$ be a right-angled manifold with a fundamental domain consisting of polytopes $P_1,...,P_n$. Then $M$ has a
finite subdivision rule. The tile types are in 1-1
correspondence with the facets of the inflations $I(\partial P_1)$,...,$I(\partial P_n)$. Each tile corresponding to a facet $I(K)\subseteq I(\partial P_i)$ is subdivided into a complex isomorphic to $\overline{I(\partial P_i)\setminus IS(K)}$, i.e. the complement of the inflated star. \end{BigTheorem}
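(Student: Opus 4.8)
The plan is to build the sequence $\{R^n(X)\}$ concretely as the combinatorial spheres in the universal cover $\tilde{M}$ and then verify that the passage from one sphere to the next is governed by the local inflation rule stated above. Since $M$ is a right-angled manifold, I would first lift its fundamental domain to tile $\tilde{M}$ by copies of $P_1,\dots,P_n$ meeting along walls (codimension-one mirror facets) at right angles. After fixing a base chamber, I would define the combinatorial distance of a chamber as the minimal number of walls separating it from the base, let $B_n$ be the union of all closed chambers at distance at most $n$, and set $S_n=\partial B_n$. Because the universal cover of a right-angled manifold is a contractible manifold in which these balls are cells, each $S_n$ is a topological $(d-1)$-sphere, and these are the complexes produced by $R$, starting from $X=S_0=\partial P_{i_0}$.

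The core of the argument is to describe the refinement $S_n \to S_{n+1}$ locally. Each facet of $S_n$ lies on the outer boundary of a chamber $wP_i$ and is identified with some face $K$ of $\partial P_i$; this face is the data that labels the corresponding tile, and inflation is exactly the bookkeeping device that records how faces of codimension at least two are split as the sphere is pushed outward across right-angled corners, so that the labelled facets of $I(\partial P_i)$ give the finitely many tile types. When we pass to $B_{n+1}$, the chamber glued across the wall $K$ has its inflated star $IS(K)$ absorbed into the interior of $B_{n+1}$, and the portion of $S_{n+1}$ that replaces the tile $I(K)$ is precisely the remaining outer boundary it contributes, namely $\overline{I(\partial P_i)\setminus IS(K)}$. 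I would verify this identification face by face, using the right-angle hypothesis to control the local picture at each corner, and conclude that the subdivision of each tile type is determined up to label-preserving cellular isomorphism. Since the faces $K$ range over the finitely many faces of the finitely many polytopes $P_1,\dots,P_n$, this exhibits $R$ as a finite subdivision rule with the stated tile types and subdivisions.

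The main obstacle I anticipate is proving that this refinement is genuinely local, i.e. that the combinatorial type of the subdivision of a tile depends only on its label $K$ and not on the global position of the chamber in $\tilde{M}$. This is where the right-angle condition must do all the work: at each codimension-two face two facets meet in a square corner, and I must show that crossing the adjacent walls produces a pattern depending only on the combinatorial data of the corner, so that the inflated star cleanly separates the faces that become interior from those that remain on the sphere. Establishing that the inflation construction captures exactly this splitting, and that the contributions of adjacent newly added chambers fit together into a sphere without overlap or gap, is the delicate step; once it is in place, finiteness and the explicit subdivision formula follow directly from the finiteness of the face posets of the $P_i$.
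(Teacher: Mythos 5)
Your outline matches the paper's broad strategy (spheres in the universal cover, inflation as the bookkeeping for tile types, finiteness from the finiteness of the face posets), but it is missing the two ideas that carry the actual proof, and one of your starting choices is not the one analyzed. First, the paper does not take $B_n$ to be the set of chambers separated from the base by at most $n$ walls; it builds each ball in $d$ substages $B(i,0),\dots,B(i,d)=B(i+1,0)$, gluing new chambers onto \emph{concave $k$-stars} for $k=1,\dots,d$ (single facets, then concave pairs, then concave triples, and so on). This staging is what guarantees that each gluing region is a topological ball (Lemma \ref{GluingLemma}) and hence that every $S(i,j)$ remains a sphere: a chamber meeting the previous ball along several facets must be attached along all of them at once, and controlling the shape of that attaching region is exactly the content of Lemma \ref{StarLemma}. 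Your filtration by wall-distance is a different one, and you give no argument that its attaching regions are balls. Second, and more importantly, the natural cell structure on $S_{n+1}$ is \emph{not} a subdivision of the one on $S_n$ --- every ridge is eventually covered up --- so your identification of the tiles with the facets of $S_n$ cannot stand as written. The paper's resolution is to discard the full cell structure and retain only the \emph{flat} ridges (the traces of walls); since walls extend outward, the flat structure of $S(n+1,0)$ genuinely refines that of $S(n,0)$. A tile of $R^n(X)$ is then the union of all visible facets of a single newly glued chamber (or a single ideal facet), and it is these unions, indexed by the convex cells of all dimensions of $S(n,0)$, that the inflation encodes.

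The ``delicate step'' you flag but do not resolve is the collapse phenomenon: when chambers are glued on either side of a flat ridge, the two new facets adjacent to that ridge must be identified with each other, and this identification can fail to be well defined or can change the topology (Figures \ref{BadCollapse} and \ref{GlobalCollapse}) unless the set of flat ridges is \emph{coherent}, i.e.\ closed under reflection across intersecting ridges. Establishing coherence inductively (Lemmas \ref{CoLemma} and \ref{CoherenceInduction}), and showing that flat ridges of concave regions occur only in intersections of concave regions (Lemma \ref{FlatIntersection}), is the bulk of the paper's argument and is precisely what makes the local picture local, which you correctly identify as the crux. Without these ingredients your face-by-face verification has no mechanism to rule out the global identifications that would break the sphere or make the subdivision depend on the chamber's position, so the proposal has a genuine gap rather than merely omitted routine detail.
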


Most of the paper is devoted to the proof of this theorem, from which Theorems \ref{BiggerTheorem} and \ref{BiggestTheorem} follow easily.

In Section \ref{FormalSection}, we give the formal definition of a subdivision rule. This section is technical, and may be omitted on first reading.

In Section \ref{GeneralSection}, we outline the general strategy for creating subdivision rules from right-angled objects. We illustrate the proof strategy by two fundamental examples in Section \ref{ExampleSection}, which will be referred to frequently. Section \ref{GluingCollapseSection} develops the two tools (gluing and collapse) that will be used in the proof of Theorem \ref{BigTheorem}.

Section \ref{MainSection} is dedicated to the proof of Theorem \ref{BigTheorem}.

In Section \ref{RAAGSection}, we create a complex for right-angled Artin groups that satisfies the conditions of Theorem \ref{BigTheorem}, and thus obtain subdivision rules for RAAGs. In Section \ref{CubulatedSection}, we extend these results to all cubulated groups, obtaining Theorem \ref{BiggestTheorem}.

Finally, Section \ref{QuasiProofs} contains the proofs of Theorems \ref{Quasi1Thm}-\ref{Quasi4Thm} on quasi-isometry invariants of subdivision rules.

\section{Formal Definition of a Subdivision Rule}\label{FormalSection}

At this point, it will be helpful to give a concrete definition of
subdivision rule. Cannon, Floyd and Parry gave the first
definition of a finite subdivision rule (for instance, in
\cite{subdivision}); however, their definition only applies to
subdivision rules on the 2-sphere, as this is the main case of interest
in Cannon's Conjecture. In this paper, we find subdivision rules for
RAAG's that subdivide or act on the $n$-sphere. In \cite{CubeSubs}, we
defined a subdivision rule in higher dimensions in a way analogous to
subdivision rules in dimension 2. We repeat that definition here. A
\textbf{finite subdivision rule $R$ of dimension $n$} consists of:
\begin{enumerate} \item A finite $n$-dimensional CW complex $S$,
called the \textbf{subdivision complex}, with a fixed cell structure
such that $S$ is the union of its closed $n$-cells (so that the complex is pure dimension $n$). We assume that for
every closed $n$-cell $\tilde{s}$ of $S$ there is a CW structure $s$
on a closed $n$-disk such that any two subcells that intersect do so in
a single cell of lower dimension, the subcells of $s$ are contained in
$\partial s$, and the characteristic map $\psi_s:s\rightarrow S$ which
maps onto $\tilde{s}$ restricts to a homeomorphism onto each open cell.
\item A finite $n$-dimensional complex $R(S)$ that is a subdivision of $S$.
\item A \textbf{subdivision map} $\phi_R: R(S)\rightarrow S$, which is a
continuous cellular map that restricts to a homeomorphism on each open
cell. \end{enumerate}

Each cell $s$ in the definition above (with its appropriate
characteristic map) is called a \textbf{tile type} of $S$. We will often describe an
$n$-dimensional finite subdivision rule by the subdivision of every tile
type, instead of by constructing an explicit complex.

Given a finite subdivision rule $R$ of dimension $n$, an $R$-\textbf{complex}
consists of an $n$-dimensional CW complex $X$ which is the union of its
closed $n$-cells together with a continuous cellular map $f:X\rightarrow
S$ whose restriction to each open cell is a homeomorphism. All tile
types with their characteristic maps are $R$-complexes.

We now describe how to subdivide an $R$-complex $X$ with map
$f:X\rightarrow S$, as described above. Recall that $R(S)$ is a
subdivision of $S$. We simply pull back the cell structure on $R(S)$
to the cells of $X$ to create $R(X)$, a subdivision of $X$. This gives
an induced map $f:R(X)\rightarrow R(S)$ that restricts to a
homeomorphism on each open cell. This means that $R(X)$ is an
$R$-complex with map $\phi_R \circ f:R(X)\rightarrow S$. We can
iterate this process to define $R^n(X)$ by setting $R^0 (X) =X$ (with
map $f:X\rightarrow S$) and $R^n(X)=R(R^{n-1}(X))$ (with map $\phi^n_R
\circ f:R^n(X)\rightarrow S$) if $n\geq 1$.

We will use the term `subdivision rule' throughout to mean a finite
subdivision rule of dimension $n$ for some $n$. As we said earlier, we
will describe an $n$-dimensional finite subdivision rule by a
description of the subdivision of every tile type, instead of by
constructing an explicit complex.

\section{General strategy}\label{GeneralSection}

In the following sections, we will describe subdivision rules for
various right-angled objects. Our goal is to find subdivision rules for right-angled Artin groups, but the easiest way is to first find subdivision rules for a more general class of right-angled objects, described in the next section. We will use as as examples two manifolds whose fundamental groups are right-angled Artin groups. These examples
provide valuable intuition and motivation, and expand on previous
results.

\subsection{Right-angled objects}

An abstract \textbf{polytope} $P$ of dimension $n$ is a CW-complex whose barycentric subdivision is a simplicial complex (this presupposes that a barycentric subdivision exists, which requires that every characteristic map of a $k$-cell extends to an embedding of the closed $k$-cell). A \textbf{facet} of a polytope is the closure
of a codimension-1 cell. A \textbf{ridge} is the closure of a cell of
codimension 2.

For the purposes of this paper, a \textbf{right-angled manifold of
dimension $d$} is a manifold with a fundamental domain that is an
abstract polytope, where the manifold and the fundamental domain are
locally modeled on the $d$-torus and its fundamental domain of a cube.
Thus, we require the link of each vertex of the polytope to be a
simplex. We require all facets to glue up in pairs, we require each all
ridges (which, we recall, are codimension-2 cells of the polytope) to be
glued together in groups of four (hence the name right-angled). The link
of each vertex in the universal cover (or in the manifold) will be a
$(d-1)$-dimensional orthoplex.

We also consider \textbf{right-angled manifolds with corners}, where we
allow the link of each vertex in the manifold to be either an orthoplex
or half of an orthoplex, or even a half of a half of an orthoplex
(essentially, these manifolds are manifolds with boundary that can be thought of as lying inside a right-angled manifold without boundary).

As a regularity condition, we only consider polytopes without
1- or 2-circuits and without prismatic 3-circuits. A
$k$\textbf{-circuit} is a chain of distinct facets $A_1,...,A_k$ such
that each pair of neighboring facets $A_i,A_{i+1}$ intersect in a ridge
(as well as $A_1$ and $A_k$). A circuit is \textbf{prismatic} if the set
of all such ridges is pairwise disjoint. Prismatic $k$-circuits for $k \leq 3$ correspond to
positive curvature in a sense, and are thus prohibited (for instance, a plane intersecting a prismatic 3-circuit would result in a triangle with three right-angles, a positively curved object).

One of the important properties of right-angled polytopes is that any
two facets that intersect at all share a ridge. This is due to the link
of each vertex being a simplex.

Another of the most important properties of a right-angled polytope is
the following:

\begin{lemma}\label{PolytopeLemma} Let $P$ be a right-angled polytope.
Then every cell of dimension $d-k$ is the unique intersection of the $k$
facets of $P$ containing it. Conversely, for every set $A_1,...,A_{k}$ of
$k$ mutually intersecting facets the intersection $\mathop{\cap} A_i$ is a single
cell of dimension $d-k$. \end{lemma}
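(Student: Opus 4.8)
The plan is to exploit the defining property of right-angled polytopes that the link of each vertex is a simplex, and to work through the link to translate intersection statements about facets into adjacency statements about simplices. Throughout, I would use the fact (stated just before the lemma) that any two facets that intersect at all share a ridge, together with the correspondence between the facets of $P$ meeting a vertex $v$ and the vertices of the simplex $\mathrm{lk}(v)$.

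\smallskip

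First I would prove the forward direction by downward induction on the dimension of the cell, or equivalently by induction on $k$. The base cases $k=0$ (the whole polytope) and $k=1$ (a facet is itself, the intersection of the single facet containing it) are immediate. For a cell $C$ of dimension $d-k$, I would pick a vertex $v\in C$ and pass to $\mathrm{lk}(v)$, which is a $(d-1)$-simplex by the right-angled hypothesis. The facets of $P$ containing $v$ correspond bijectively to the vertices of this simplex, and the cells of $P$ containing $v$ correspond to the faces of the simplex; under this correspondence, a cell of dimension $d-k$ through $v$ maps to a face of dimension $d-1-k$, which is spanned by exactly $d-k$ vertices. I would then show that $C$ is contained in precisely the $k$ facets corresponding to the $k$ vertices of $\mathrm{lk}(v)$ \emph{not} in that face, and that these $k$ facets intersect in exactly $C$: any point in their common intersection lies in a cell whose link-image is contained in the face opposite those $k$ vertices, forcing it into $C$. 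The key mechanism is that intersection of facets corresponds to taking the face of the simplex spanned by the complementary vertices, and a simplex is determined by its vertex set.

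\smallskip

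For the converse, given $k$ mutually intersecting facets $A_1,\dots,A_k$, I would again localize at a vertex. The crucial claim is that $k$ mutually intersecting facets have a common vertex; once this is established, I pass to the link of such a vertex $v$, where the $A_i$ become $k$ distinct vertices of the simplex $\mathrm{lk}(v)$. Since any $k$ vertices of a simplex span a unique face of dimension $k-1$, the corresponding intersection $\bigcap A_i$ is a single cell, and counting dimensions in the link shows it has dimension $d-k$. To prove the common-vertex claim I would argue by induction, using that pairwise intersections are ridges and that the link structure prevents the ``nerve'' of the facets from having empty total intersection — this is essentially a Helly-type property that holds because each link is a simplex and simplices have the property that any collection of pairwise-adjacent vertices spans a face.

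\smallskip

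The main obstacle I anticipate is precisely this common-vertex / Helly step in the converse: passing from the hypothesis that the $A_i$ intersect \emph{pairwise} to the conclusion that they intersect \emph{mutually in a common cell}. Pairwise intersection only directly gives that each pair shares a ridge, and one must rule out configurations where the facets interlock without a shared vertex. I expect this is exactly where the prohibition on $1$-, $2$-circuits and prismatic $3$-circuits is used, since a prismatic circuit is precisely a family of pairwise-intersecting facets with empty common intersection. I would therefore phrase the converse so that the regularity hypotheses feed directly into establishing the simplicial (Helly) property of the nerve of the facets, after which the clean correspondence with faces of a simplex finishes the argument.
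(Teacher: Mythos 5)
Your proposal is correct and follows essentially the same route as the paper's (much terser) proof: localize at a vertex, use the fact that the link is a simplex to identify facets through the vertex with vertices of that simplex, and invoke the prohibition on prismatic $2$- and $3$-circuits to upgrade pairwise intersection to a unique common cell. You correctly isolate the Helly-type step as the only genuinely global issue, which is exactly the point the paper handles by excluding prismatic circuits.
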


\begin{proof} The lemma is
locally true, since the link of each vertex is a simplex. Also, no facet
can intersect the same vertex twice. Near each vertex, there can only be
a single cell of dimension $d-k$ in the intersection; if there were two
such cells that were disjoint, there would be a prismatic 2-circuit,
which is not allowed. \end{proof}

\subsection{Method of obtaining a subdivision rule}\label{MethodSection}

Given a subdivision rule $R$ and a complex $X$, one can obtain a
sequence of complexes $R^n(X)$ which are all homeomorphic to $X$ and
where each $R^{n+1}(X)$ is a subdivision of $R^n(X)$ (after identifying via the
homeomorphisms). Our strategy is to reverse this process; we produce a
sequence of complexes $R^n(X)$ such that for each $n$: \begin{enumerate}
\item $R^n(X)$ is homeomorphic to $X=R^0(X)$, and \item $R^{n+1}(X)$ is
a subdivision of $R^n(X)$ after identifying both with $X$.
\end{enumerate}

From such a sequence of spaces we can extract a finite subdivision rule by
showing that there are only finitely many ways a cell in some $R^k(X)$
will be subdivided in $R^{k+1}(X)$.

The tilings are obtained by looking at larger and larger balls in the
universal cover. Let $B(0,0)$ be a single fundamental domain in the
universal cover. Let $B(0,1)$ be the union of $B(0,0)$ and all
fundamental domains sharing a facet with $B(0,0)$. Let $B(0,2)$ be
$B(0,1)$ together with all fundamental domains intersecting ridges
(codimension-2 cells) of $B(0,0)$. In general, let $B(1,j)$ be the union
of $B(0,j-1)$ with all fundamental domains intersecting codimension-$j$
cells of $B(0,0)$.

Now, let $B(i,0)=B(i-1,d)$, and let $B(i,j)$ be the union of $B(i,j-1)$ with all fundamental domains sharing
codimension-$j$ cells with $B(i,j-1)$.

We let $S(i,j)$ be the boundary of $B(i,j)$. In many situations, this
boundary will be a sphere, and it is this sphere (or sequence of
spheres) that we will subdivide.

\textbf{Note:} We frequently use the word \textbf{wall} in this paper.
There are several interpretations of this word in the literature. We use
the word wall to mean a hyperplane in the dual cube complex of a
complex, or equivalently an extension of a facet in the original
complex. This terminology will be used throughout.

\subsection{Convex, flat, and concave ridges} As we said above,
right-angled manifolds are locally modelled on the $n$-torus (away from
the boundary). When constructing the universal cover, the neighborhood
of each ridge looks locally like Figure \ref{LocalPicture}, where the
ridge corresponds to the vertex in the center. In the universal cover of
the 2-dimensional torus, this picture is exactly the neighborhood of the ridge (a
vertex). In all other spaces, the neighborhood of the ridge looks like the product of
this picture with a cube of dimension $d-2$.
\begin{figure}
\scalebox{.35}{\includegraphics{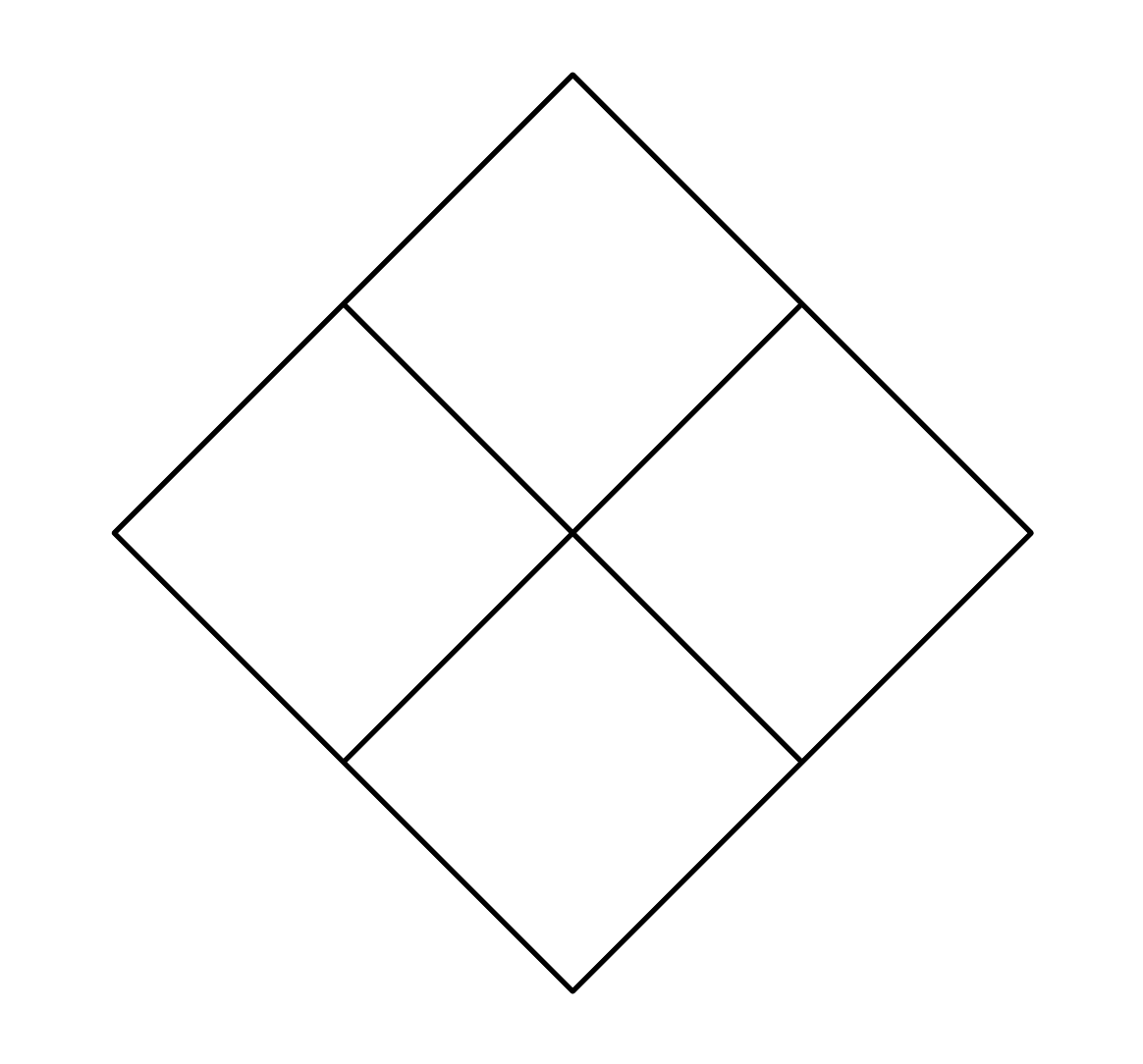}} \caption{The local picture about a ridge
in the universal cover.}
\label{LocalPicture}
\end{figure}

In constructing the balls $B(i,j)$, each ridge goes through one of a few
predictable progressions. The simplest is shown in Figure
\ref{UnloadedProgression}. A ridge begins as the corner of a single
fundamental domain. We then glue fundamental domains onto each of the
neighboring facets, so that there are 3 fundamental domains about the
ridge. Finally, we place one more fundamental domain on to cover up the
ridge. We call the ridges touching one fundamental domain in $B(i,j)$
\textbf{convex}. We call ridges touching three fundamental domains
\textbf{concave}. Finally, those with four fundamental domains no longer
appear in the boundary $S(i,j)$, so we ignore them.

\begin{figure}
\scalebox{.35}{\includegraphics{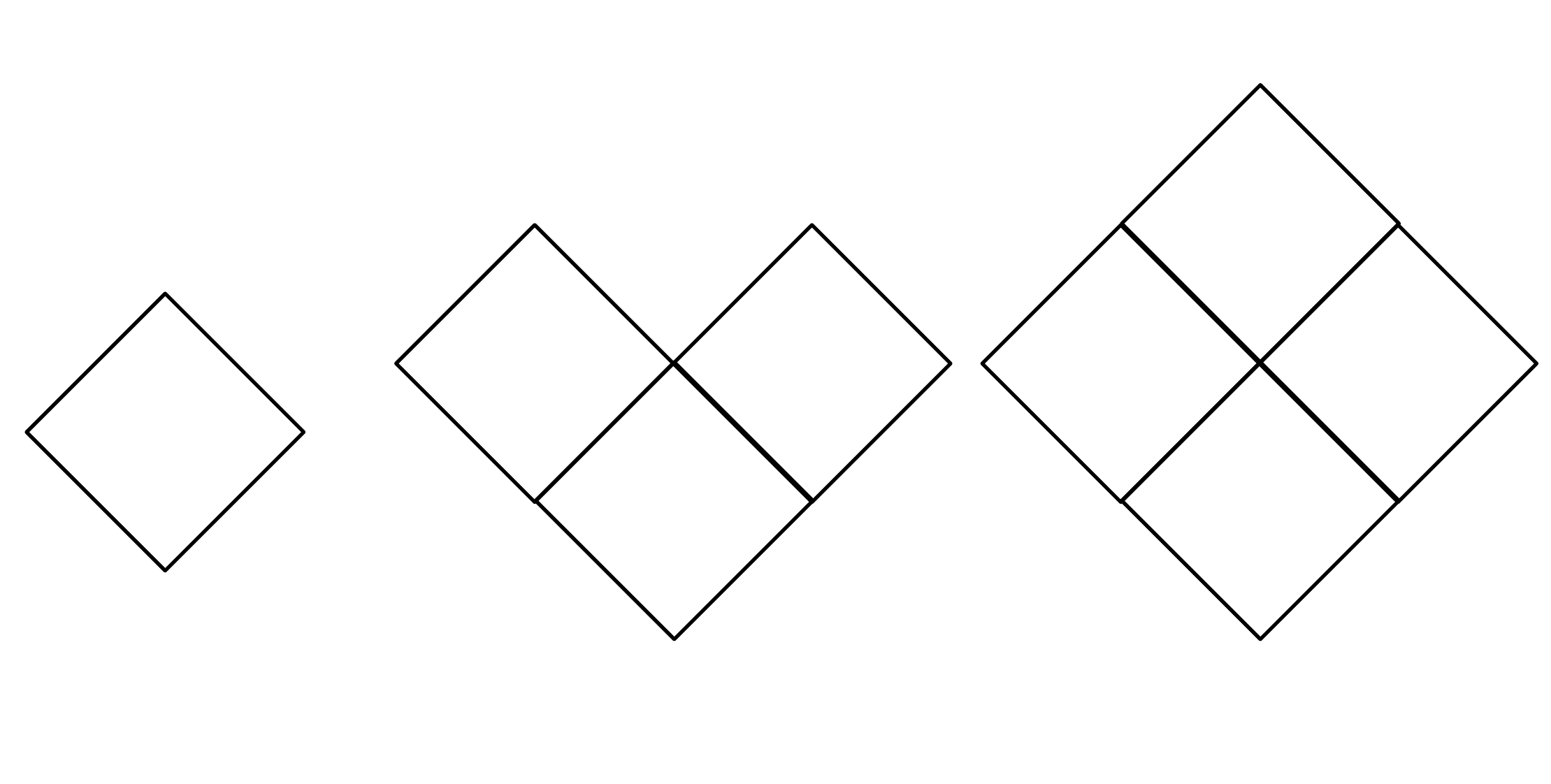}}\caption{The progression about a convex
ridge. It goes from convex, to concave, to covered
up.}\label{UnloadedProgression} \end{figure}

It will happen often that we get a ridge that touches only two
fundamental domains. See Figure \ref{FragilePicture}. Such ridges are
called \textbf{flat}. When we glue a fundamental domain onto both facets
containing a flat ridge, the new facets to either side must be identified
to satisfy the local homeomorphism condition, and we say the facets
\textbf{collapse}.
 \begin{figure}
\scalebox{.35}{\includegraphics{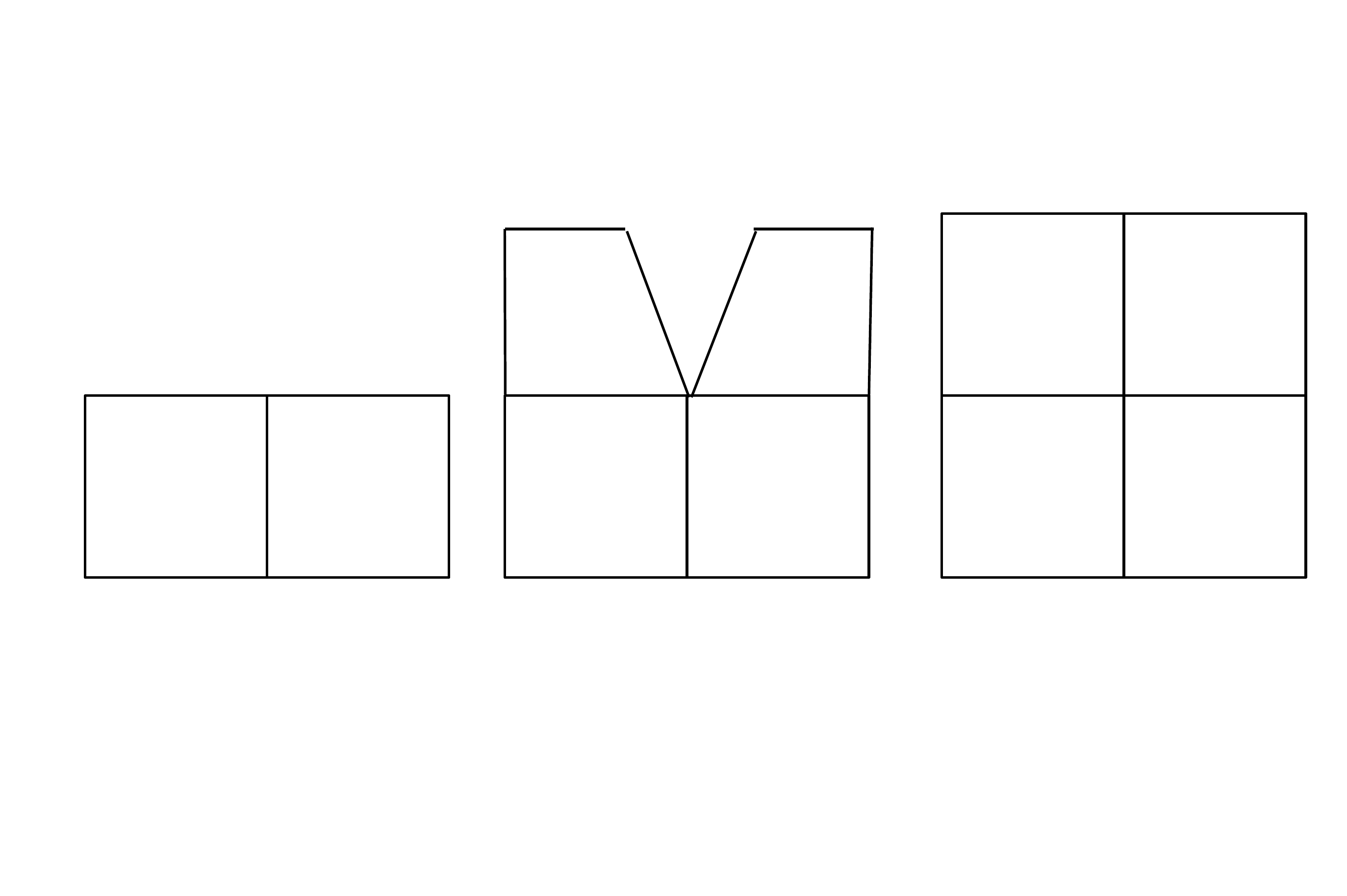}} \caption{The progression
about a flat ridge. It goes from flat, on the left, to covered up, on
the right. The second picture is not an independent step, it just shows
that we should identify the two diagonal edges to get the third
}
\label{FragilePicture}
\end{figure}

As described earlier, we also have boundary facets, which are facets that
never get identified with anything. We call these \textbf{ideal} facets.
The ridges in their intersection with non-ideal facets are called
\textbf{ideal} ridges. They are identified in groups of two, and they
follow the progression in Figure \ref{FragileIdealPicture}. When an
ideal ridge is contained in two ideal facets of $S(i,j)$, we say it is
\textbf{flat ideal}; it doesn't collapse ever, but the name reminds us
of the similarity with flat ridges. We often use the word `flat' to include flat ideal ridges.
\begin{figure}\label{FragileIdealPicture}
\scalebox{.35}{\includegraphics{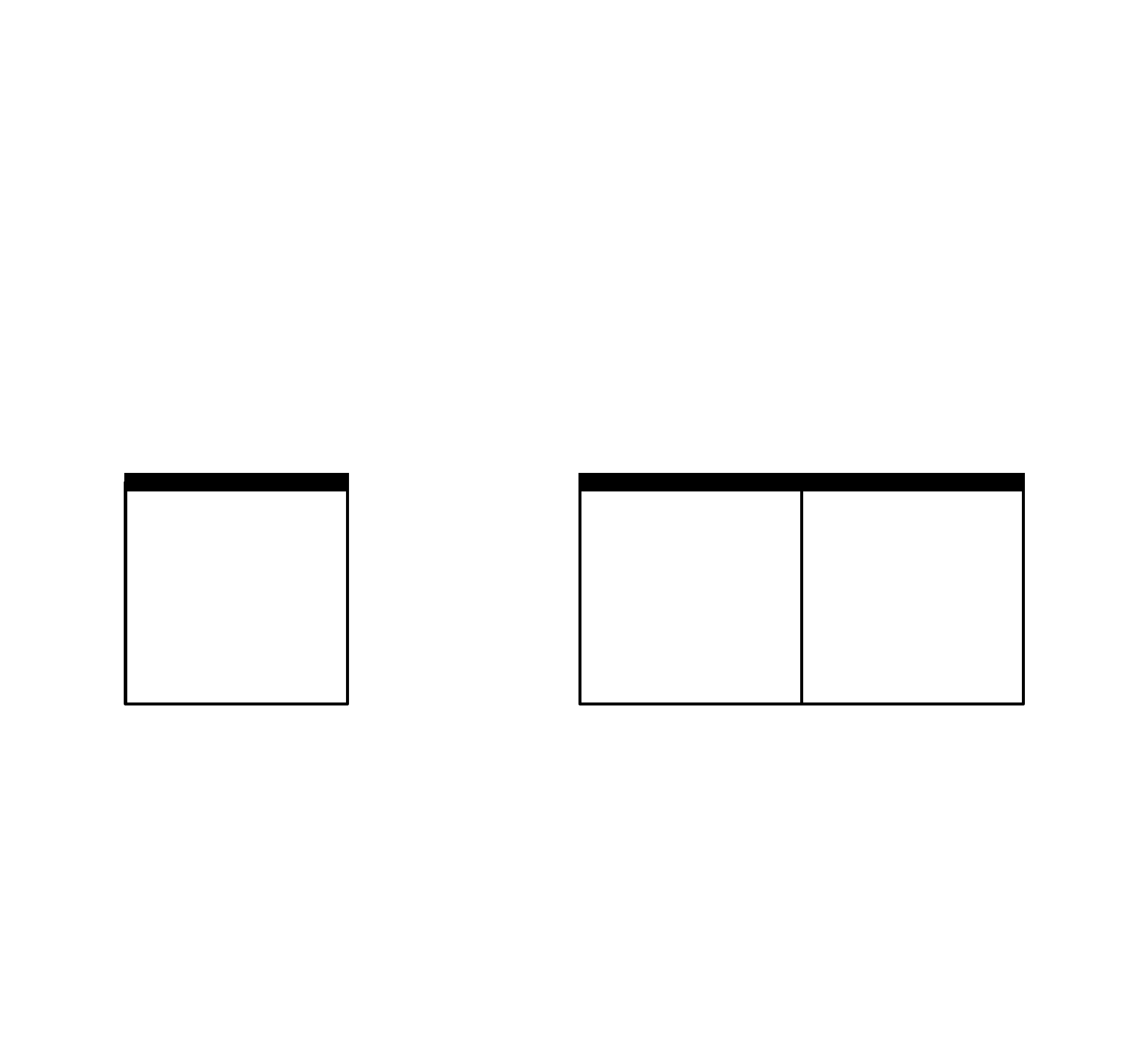}} \caption{The progression about an ideal ridge. It goes
from ideal to flat ideal.}\end{figure}

\section{Examples}\label{ExampleSection}

We now present two fundamental examples, both of which are 3-manifolds with fundamental groups that are right-angled Artin groups. The first is the 3-torus, whose fundamental group is $\mathbb{Z}^3$. The second is the product of a punctured torus with a circle, whose fundamental group is $\mathbb{Z}\times F_2$, the product of the integers with the free group on two generators. These examples should be referred to frequently, and represent almost the entire spectrum of possibilities in creating these subdivision rules.

\subsection{The 3-torus}\label{CubeSection}

 \begin{figure}
\scalebox{.35}{\includegraphics{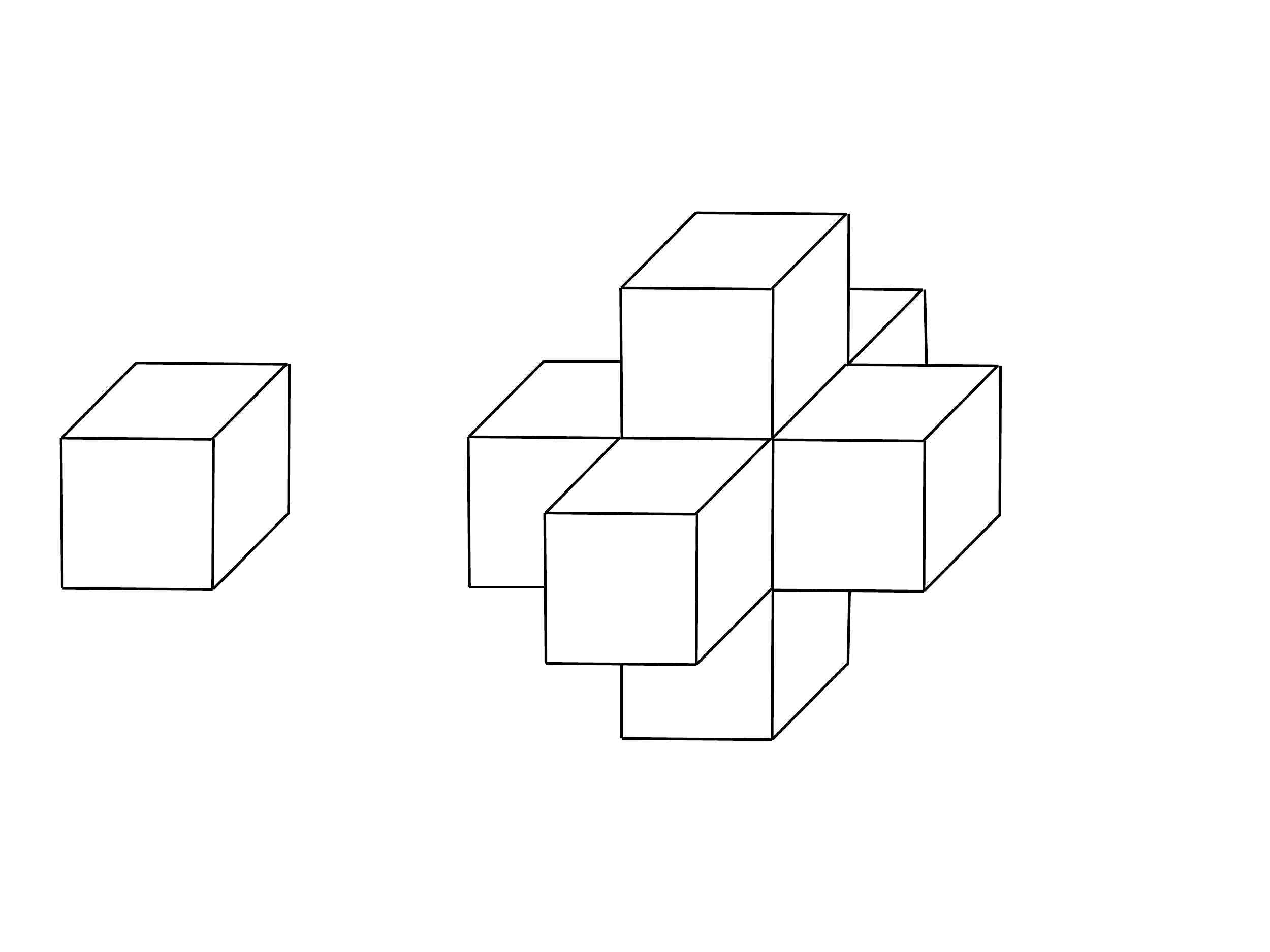}} \caption{The stages $S(0,0)$, $S(0,1)$ for
the cube.}\label{StageS11}\end{figure}

 \begin{figure} \scalebox{.3}{\includegraphics{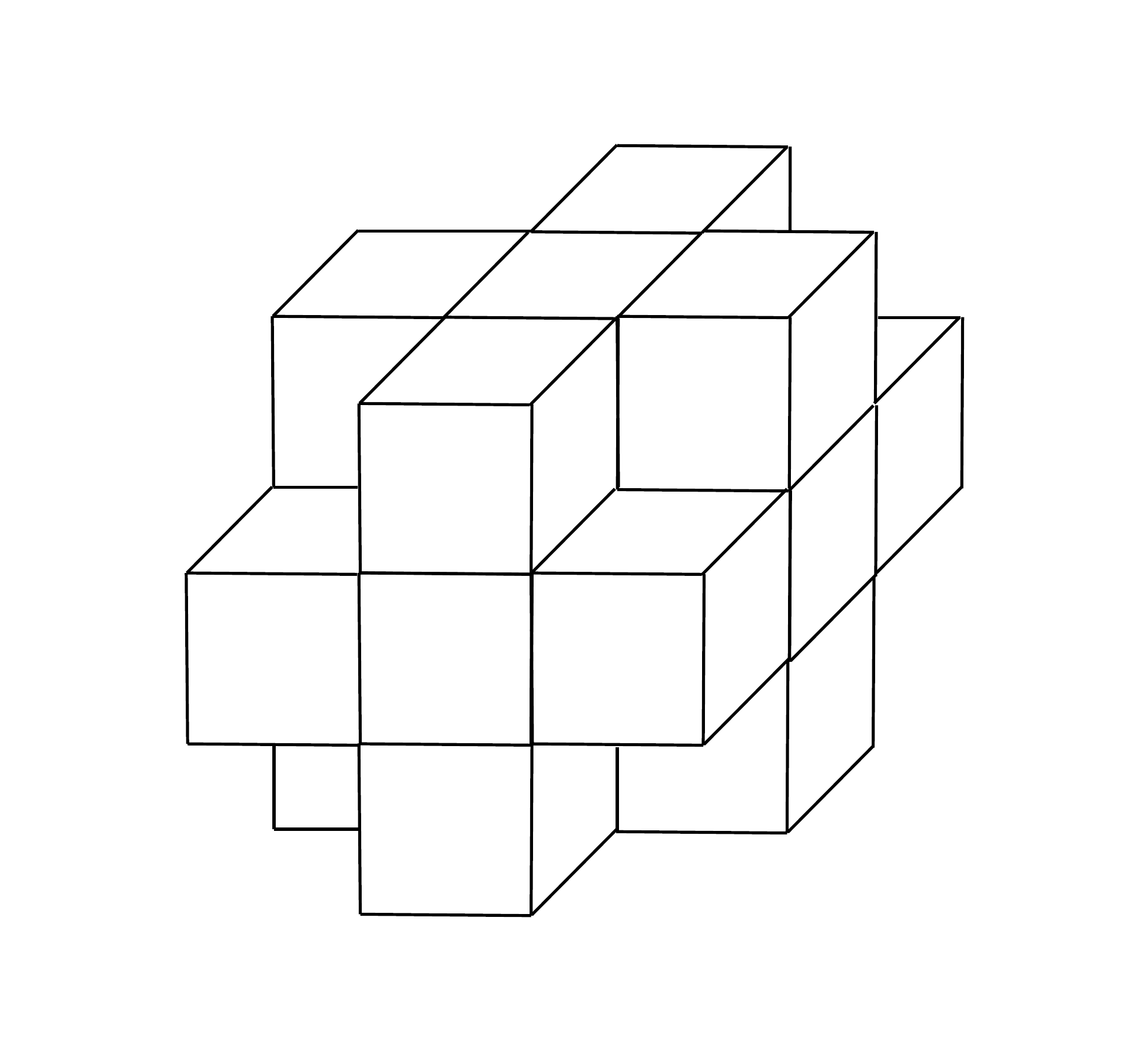}}
\caption{The stage $S(0,2)$ for the
cube.}\label{StageS12}\end{figure}

 \begin{figure}
\scalebox{.3}{\includegraphics{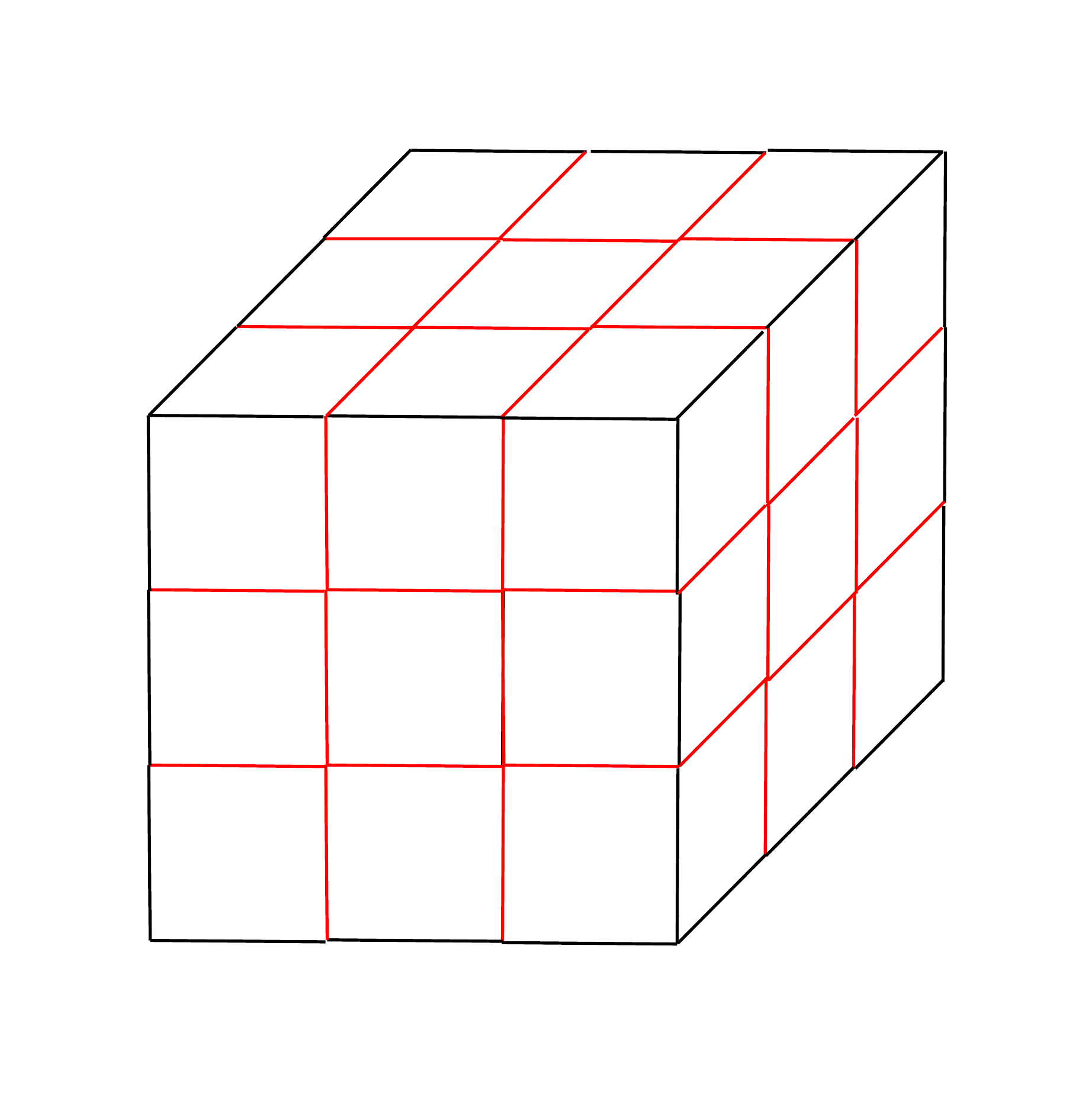}}\caption{The stage $S(0,3)=S(1,0)$ for the
cube, with all flat edges marked.}\label{FlatS0} \end{figure}

 \begin{figure}
\scalebox{.3}{\includegraphics{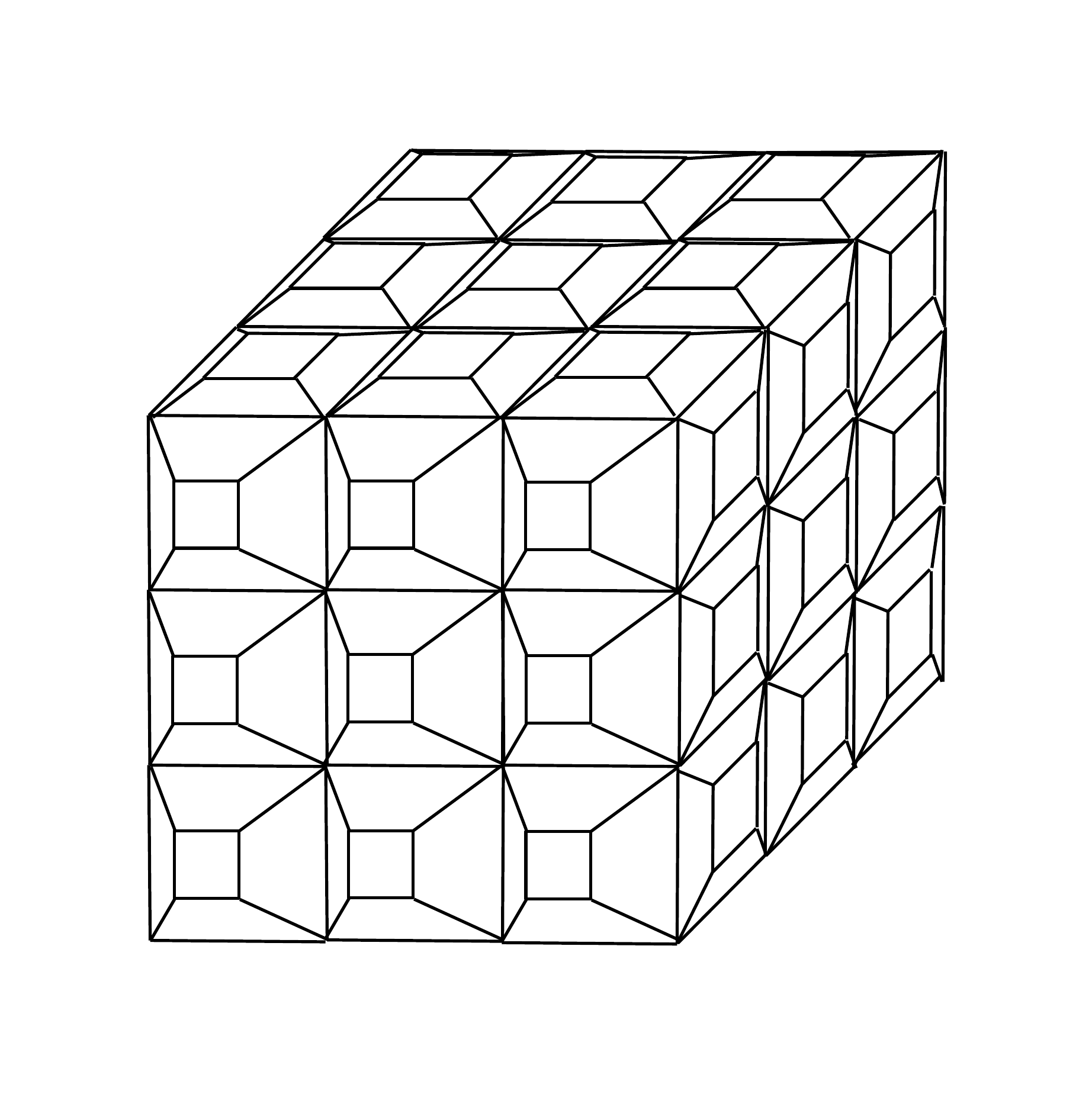}} \caption{The stage $S(1,1)$ for the cube,
before collapsing edges}\label{Stage21Prime}\end{figure}

 \begin{figure}
\scalebox{.25}{\includegraphics{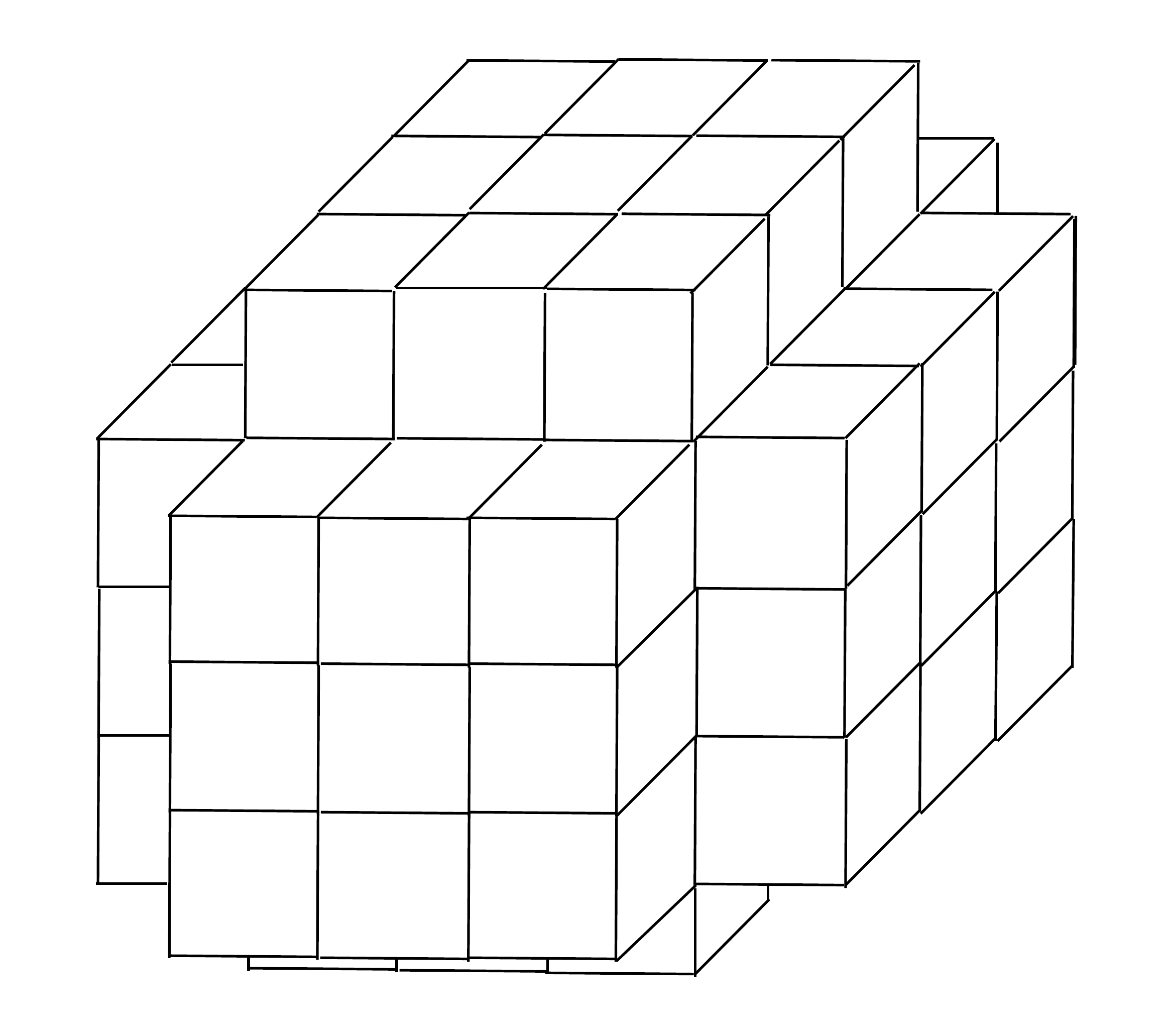}} \caption{The stage $S(1,1)$ for the cube,
after collapsing edges.}\label{Stage21}\end{figure}

 \begin{figure}
\scalebox{.25}{\includegraphics{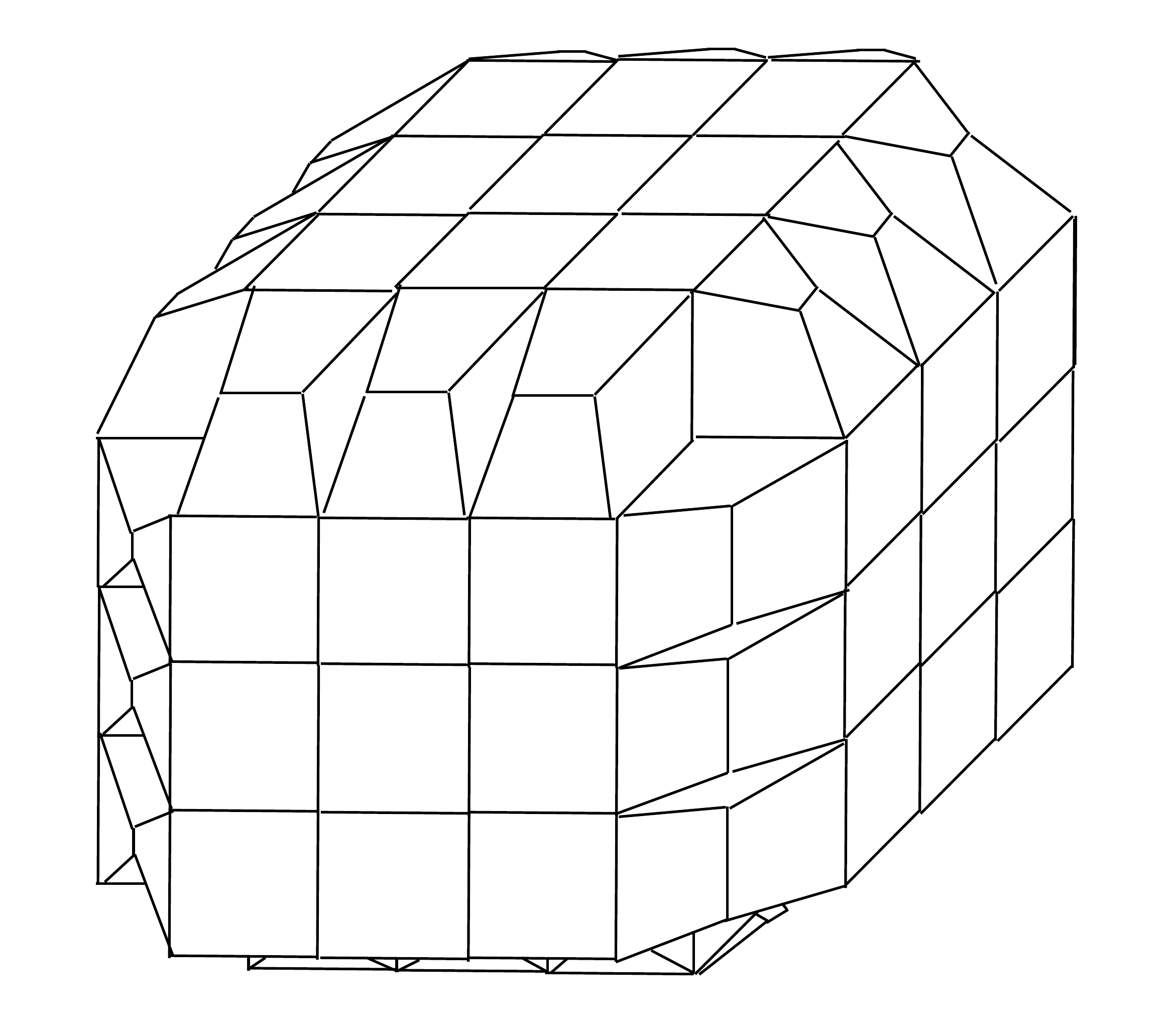}} \caption{The stage $S(1,2)$ for the cube,
before collapsing edges}\label{Stage22Prime}\end{figure}

 \begin{figure}
\scalebox{.25}{\includegraphics{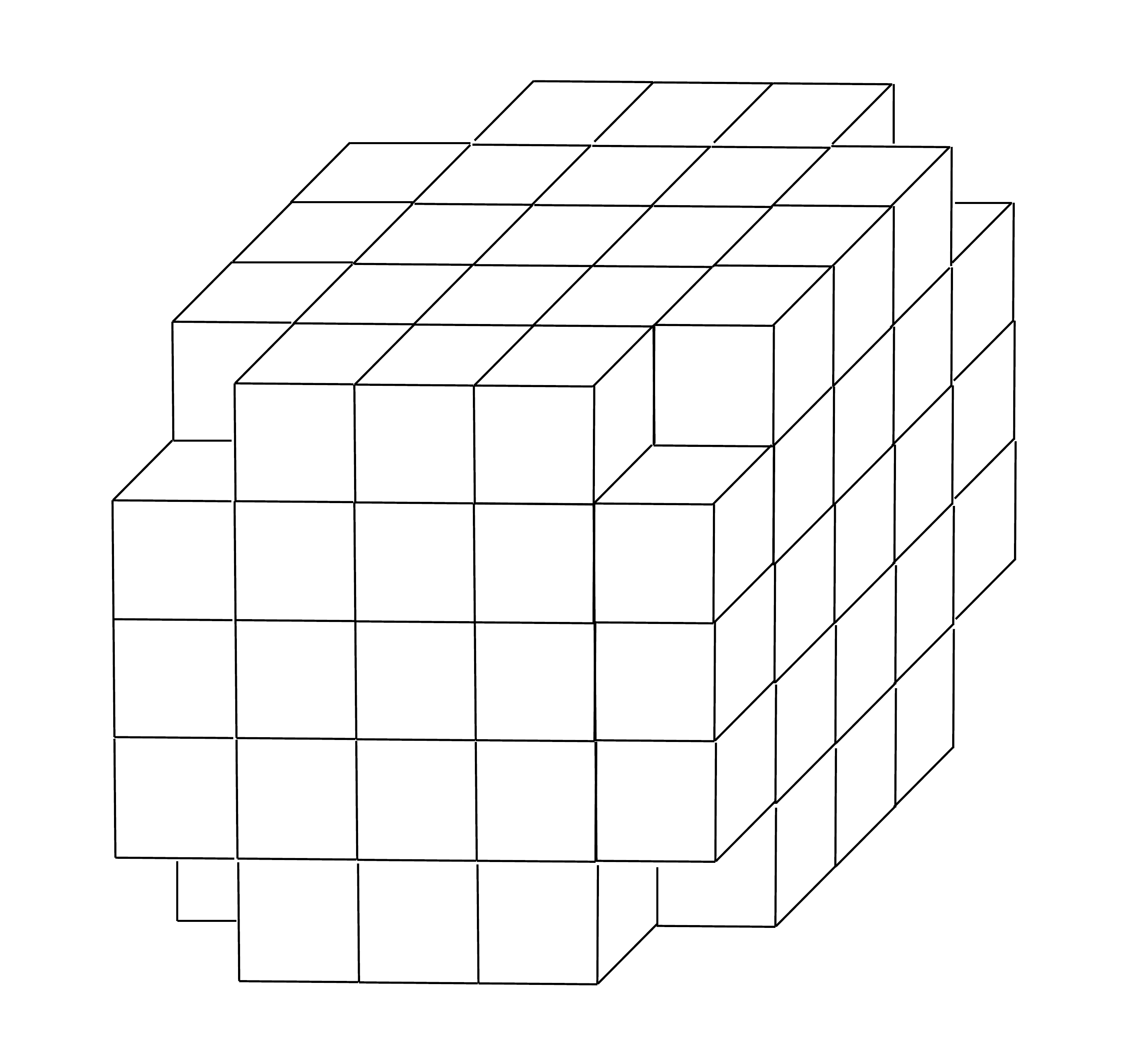}} \caption{The stage $S(1,2)$ for the cube,
after collapsing edges.}\label{StageS22}\end{figure}

 \begin{figure}
\scalebox{.25}{\includegraphics{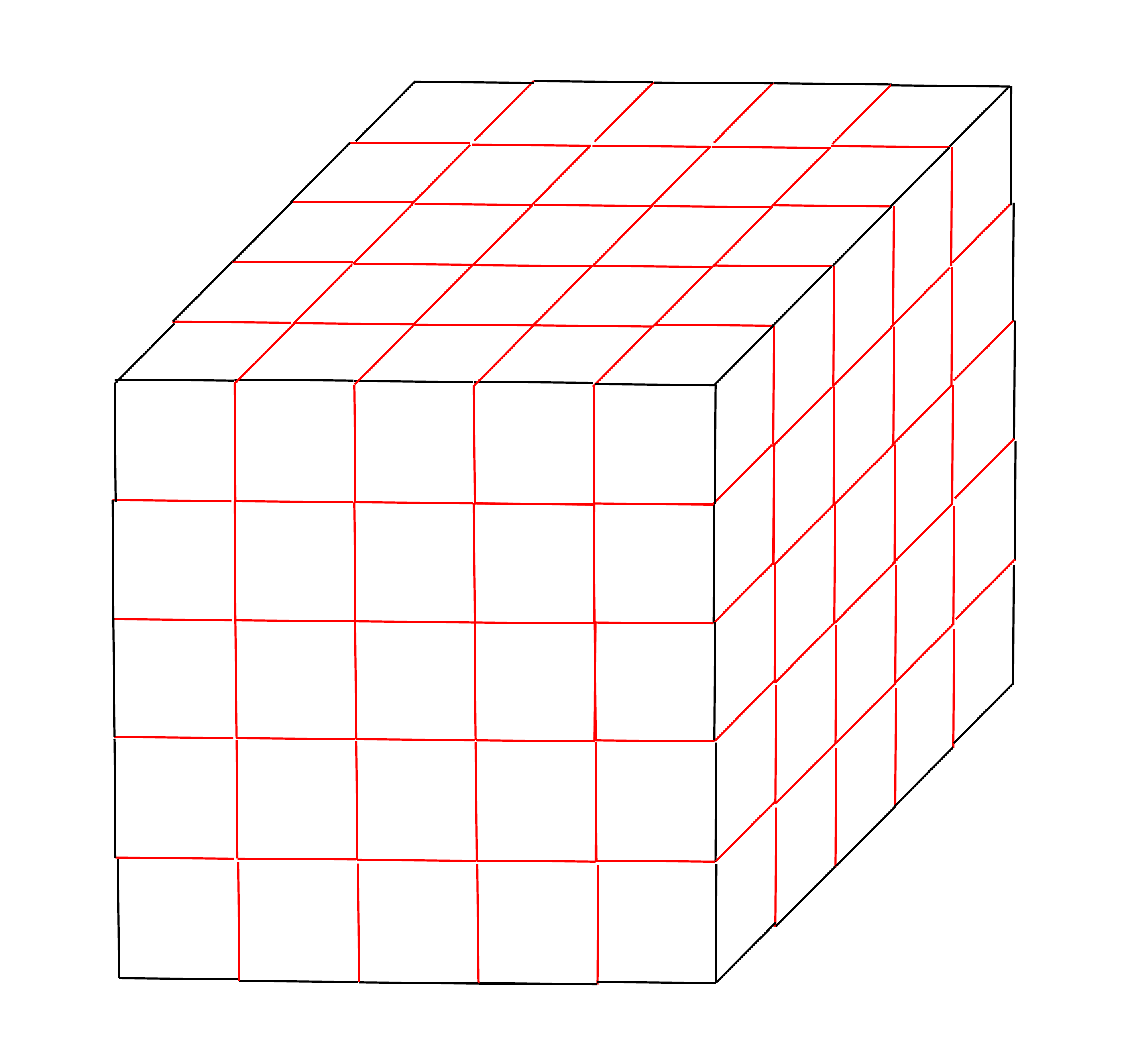}}\caption{The stage $S(1,3)=S(2,0)$ for the
cube, with all flat edges marked.}\label{FlatS1} \end{figure}

As an example of our general strategy, consider the 3-torus, with
fundamental domain a cube. Stages $S(0,0)$ through $S(1,3)$ are shown in
Figures \ref{StageS11} through \ref{FlatS1}.

Stage $S(0,0)$ is a single cube, with all edges convex. We create stage
$S(0,1)$ by gluing a cube onto every exposed face. This makes all edges
from $S(0,0)$ concave and all new edges convex.

Concave edges are associated with pairs of faces; notice that at a
concave edge, only one cube can be placed, and it covers two faces. Each
such pair of faces is called a \textbf{concave pair}. We glue cubes onto them to
create $S(0,2)$ (as in Figure \ref{StageS12}). Now some edges are
flat. Notice that, in this case, the flat edges are exactly the intersections of the
new cubes with the concave pairs. Notice also that we have more
concave edges, which now gather in groups of three; each set of three
concave faces together with the three concave edges is called a
\textbf{concave triple}. The new concave edges which are
part of the concave triples are also boundaries of concave pairs, just
like the flat edges. We glue a single cube onto each concave triple to
create $S(0,3)=S(1,0)$ (shown in Figure \ref{FlatS0}). All edges now are either convex or
flat, and every flat in this stage edge was a boundary of a concave pair or
concave triple at some previous stage.

In creating $S(1,1)$, we again glue a single cube onto every exposed
face, as shown in Figure \ref{Stage21Prime}. There are no concave edges, so every cube is glued onto a single
face. However, because there are several flat edges, many faces
`collapse', meaning that they are identified together in pairs. This
concept is described in more detail in Section \ref{CollapseSection}.
All faces of cubes in $S(1,1)$ that touch a flat edge of $S(1,0)$
collapse and disappear, forming the complex shown in Figure \ref{Stage21}. There are also several concave edges in
$S(1,1)$; these are exactly the convex edges of $S(1,0)$ (all of which
are still `peeking through' in $S(1,1)$) , and the faces containing them
again meet up in concave pairs, some of which have flat edges. To create
$S(1,2)$, we glue a single cube onto each concave pair, and collapse
flat faces, as shown in Figures \ref{Stage22Prime} and \ref{StageS22}; note that in this situation, each face that collapses has 2
flat edges, and shares both flat edges with the same face. Finally, we
have concave triples in $S(1,2)$, none of which have any flat edges. We
glue on a cube to each concave triple, and start over again at
$S(1,3)=S(2,0)$, shown in Figure \ref{FlatS1}.

\begin{figure}
\scalebox{.5}{\includegraphics{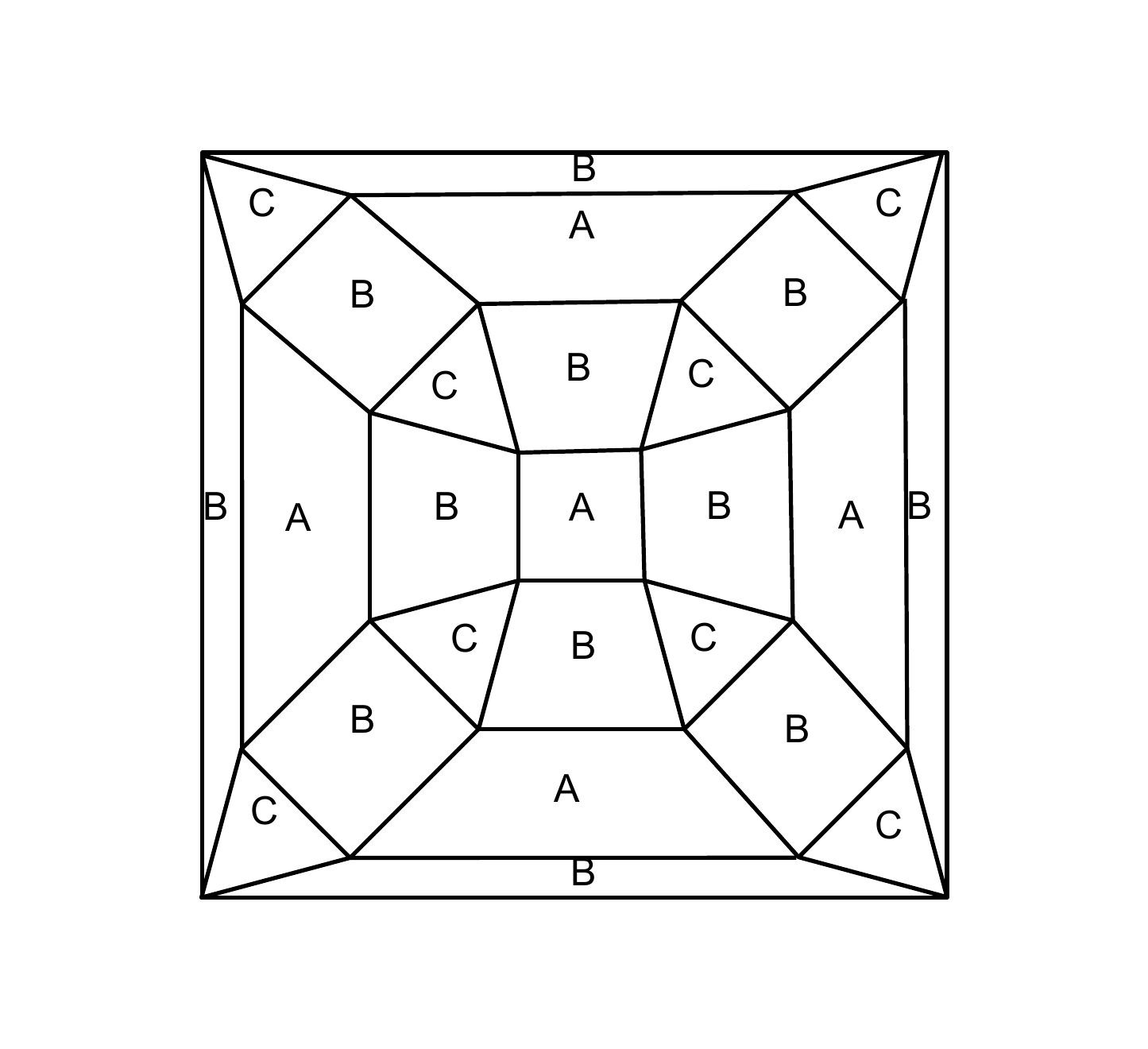}} \caption{The complex $X$ that the
subdivision rule of Figure \ref{TorusSubdivision} acts on. The cell
structure is the same as that given by the flat edges in Figure
\ref{FlatS0}. The `outside face' is a type $A$ tile. }\label{TorusS0}\end{figure}

\begin{figure}
\scalebox{.5}{\includegraphics{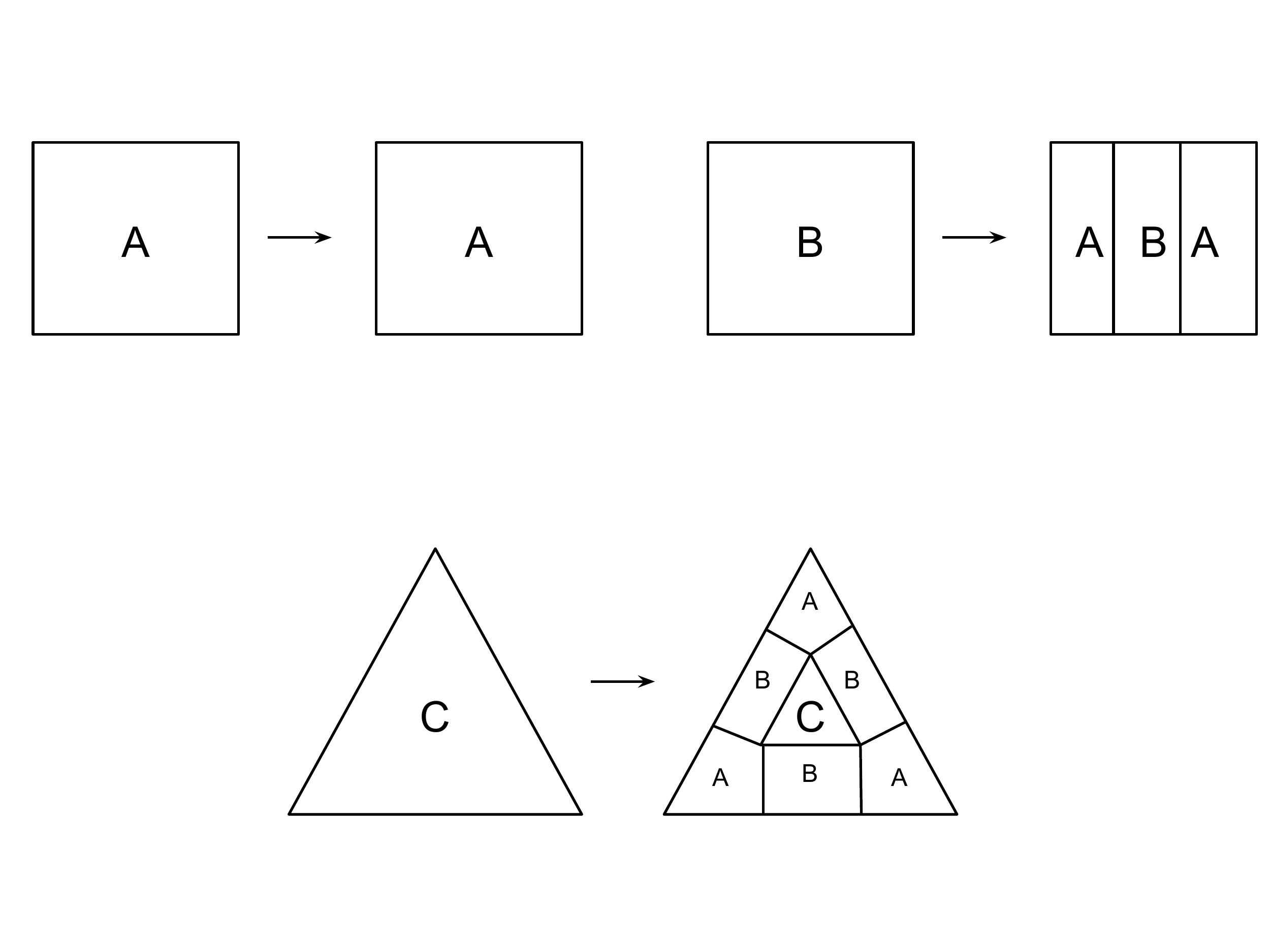}} \caption{The three tile types for the
subdivision rule of the torus.}\label{TorusSubdivision}\end{figure}

\begin{figure}
\scalebox{.5}{\includegraphics{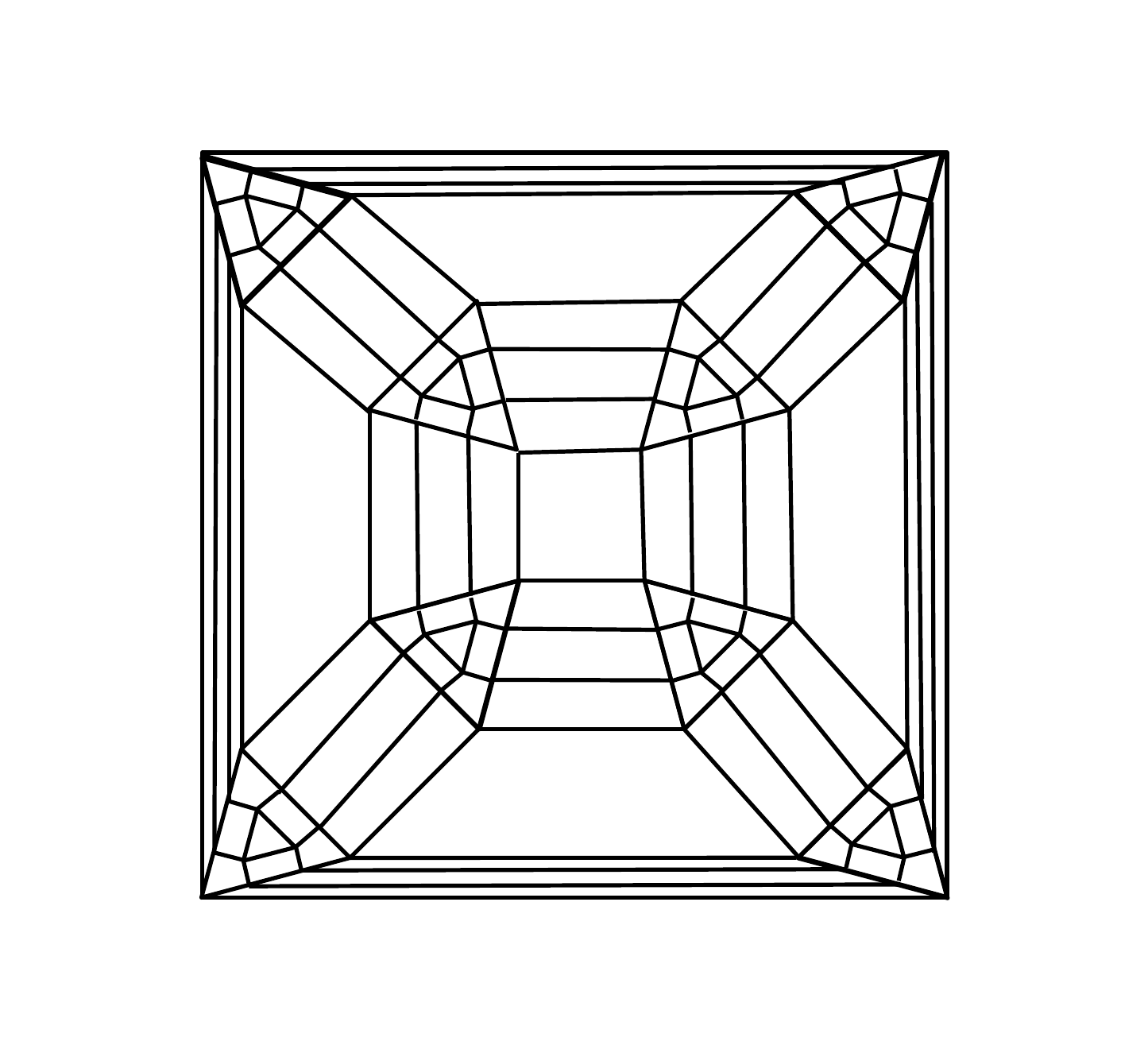}} \caption{The complex $R(X)$ given by
subdividing the complex $X$ in Figure \ref{TorusS0} using the
subdivision rule in \ref{TorusSubdivision}. The cell structure is the
same as that given by the flat edges in Figure
\ref{FlatS1}.}\label{TorusS1}\end{figure}

As described earlier in Section \ref{MethodSection}, to get a
subdivision rule we need to obtain a sequence of homeomorphic complexes
where each is a subdivision of the next. The boundary of each stage $S(i,j)$
is a sphere, so all the complexes are homeomorphic, but it is difficult to make one stage a subdivision of the next. Notice that
all edges are eventually covered up by fundamental domains, and
there is no obvious homeomorphism making one $S(i,j)$ a subdivision of
the next.

On the other hand, notice that in each $S(k,0)$, the set of flat edges
is the intersection of a family of walls (extensions of faces) with the
spherical boundary. If we let $R^n(X)$ be the cell structure on the
sphere given by just the flat edges of $S(n+1,0)$, there is a natural
way to make $R^{n+1}(X)$ a subdivision of $R^n(X)$, because the walls
associated to the flat edges of $S(n+1,0)$ extend to a subset of the
walls associated to flat edges of $S(n+2,0)$. This gives a sequence of
homeomorphisms (well-defined up to cellular isotopy) identifying each
$R^{n+1}(X)$ as a subdivision of $R^n(X)$.

We use $R^n(X)=S(n+1,0)$ in the proceeding paragraph because $S(0,0)$
has no flat edges. The cell structure of $X=S(1,0)$ is
shown in Figure \ref{TorusS0}. Each cell subdivides in one of three
ways, as shown in Figure \ref{TorusSubdivision}. These three ways of subdividing correspond to whether the tile is in the center of a facet on an edge of the cube, or on the corner of a cube. The subdivision complex is shown in \ref{TorusS0}, and its first subdivision is shown in Figure \ref{TorusS1}.

This example is important, because the neighborhood of each
codimension-3 cell of a right-angled object is locally modeled on the
3-torus, and so the process described above is essentially all that ever
happens.

\subsection{A manifold with product geometry}\label{ProductSection}

\begin{figure}
\scalebox{.5}{\includegraphics{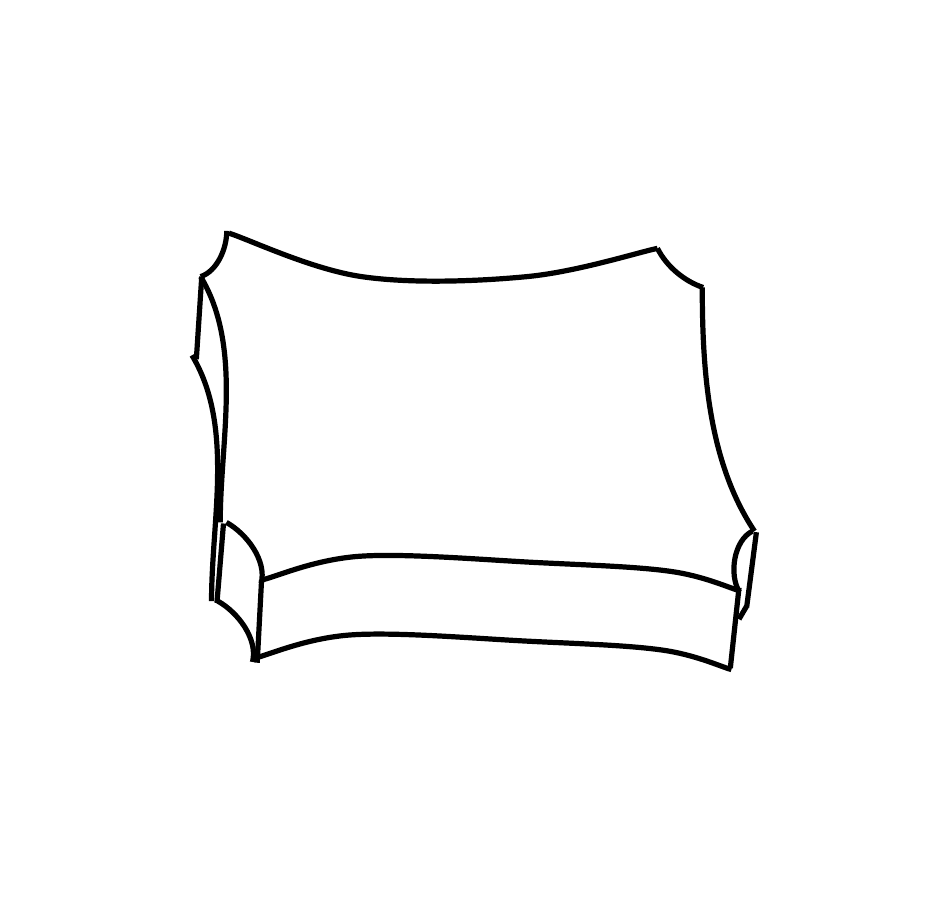}} \caption{The stage $S(0,0)$ for the product
manifold.}\label{ProductS00}\end{figure}

\begin{figure}
\scalebox{.5}{\includegraphics{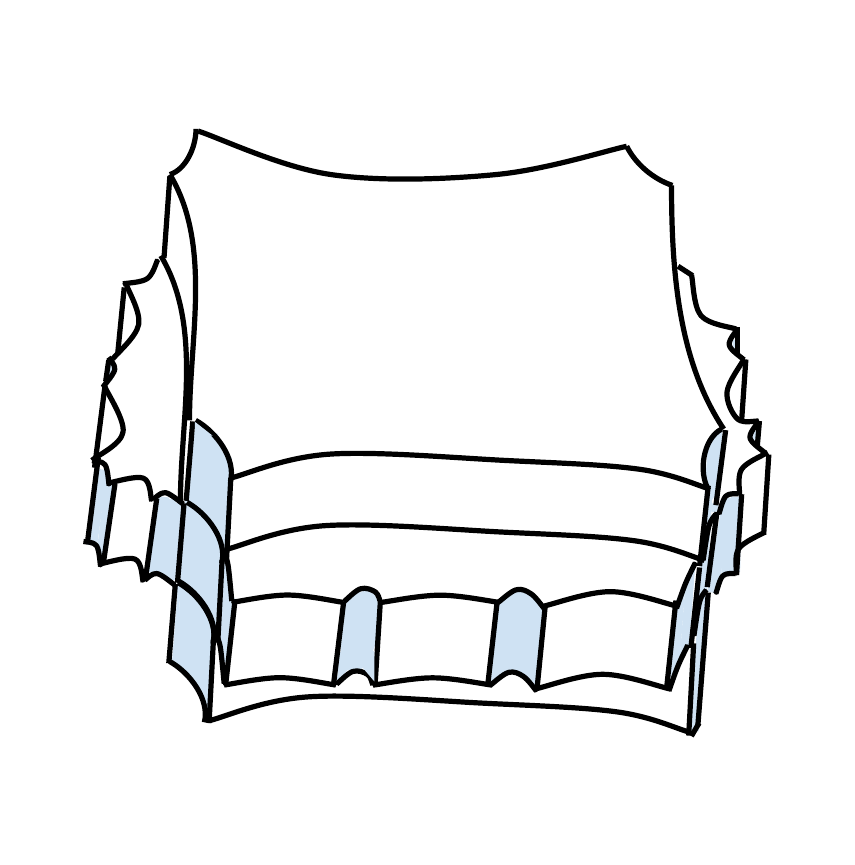}} \caption{The stage $S(0,1)$ for the product
manifold.}\label{ProductS11}\end{figure}

\begin{figure}
\scalebox{.5}{\includegraphics{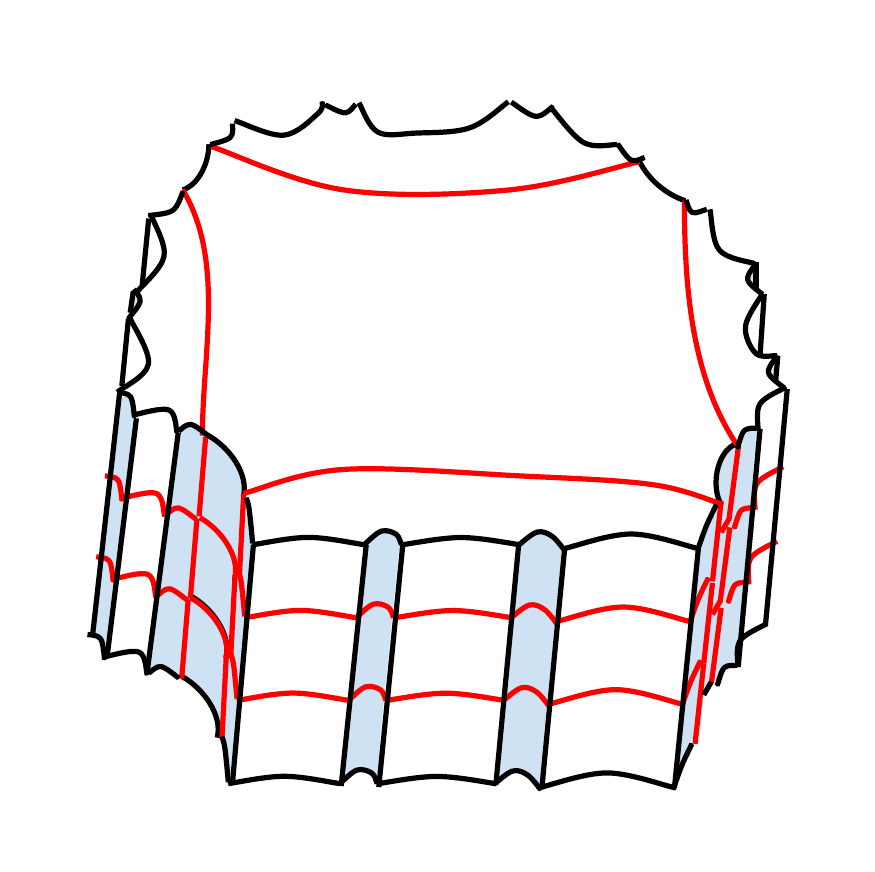}} \caption{The stage $S(0,2)$ for the product
manifold, with all flat edges marked.}\label{FlatProductS0}\end{figure}

Our second example introduces the concept of `ideal' faces, which are
faces that never are glued to anything and never subdivide; they arise
as the boundaries of manifolds. One example is the 3-manifold $M$ which
is the product of a circle with a punctured torus. We enlarge the
puncture by removing an open disk instead of a point, giving $M$ a
boundary which is itself a torus. A fundamental domain for $M$ is shown
in Figure \ref{ProductS00}, where the faces corresponding to the
boundary are slightly shaded in.

Figure \ref{ProductS00} is also the first stage $S(0,0)$ in our sequence
of spheres. After gluing on a domain to each non-ideal face, we obtain
$S(0,1)$, shown in Figure \ref{ProductS11}. As in the cube example,
there are now concave edges, and all faces with concave edges meet up in
pairs. We create $S(0,2)$, shown in Figure \ref{FlatProductS0}, by gluing
on a domain to each concave pair. There are now no concave edges at all,
so we start over, setting $S(0,2)=S(1,0)$.

At this point the figures become too complicated to depict accurately.
But just as in the 3-torus, we create $S(1,1)$ by gluing a
domain onto each non-ideal face, with every flat edge causing the new
faces around it to collapse. There will again be concave edges contained
in concave pairs. We then create $S(1,2)=S(2,0)$ by gluing domains onto
every pair, and continuing onward.

\begin{figure}
\scalebox{.45}{\includegraphics{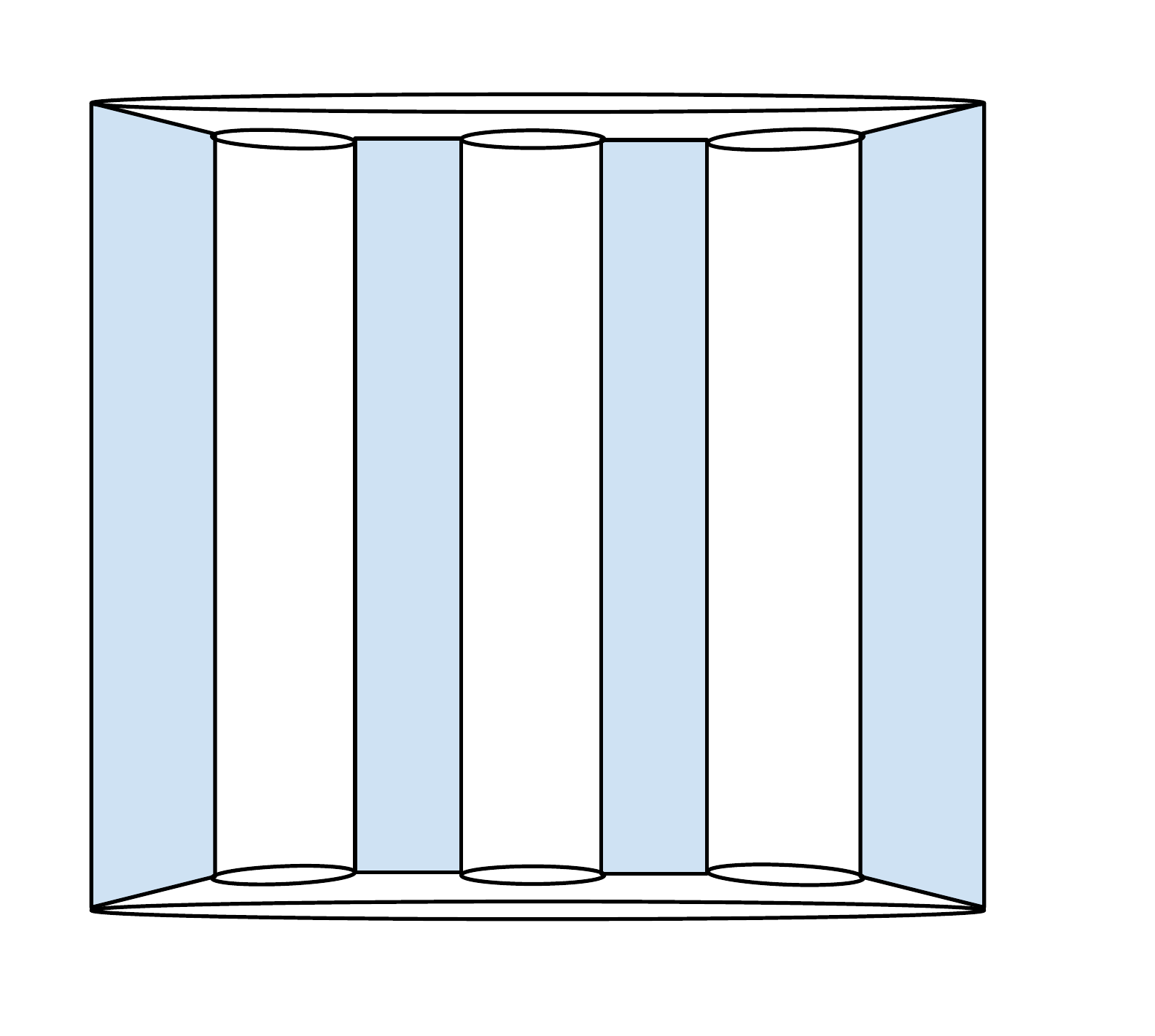}} \caption{The complex $X$ that the
subdivision rule of Figure \ref{ProductSubdivision} acts on. The cell
structure is the same as that given by the flat edges in Figure
\ref{FlatProductS0}. The tile types of the faces are determined by the
number of edges.}\label{ProductS0}\end{figure}

\begin{figure}
\scalebox{.5}{\includegraphics{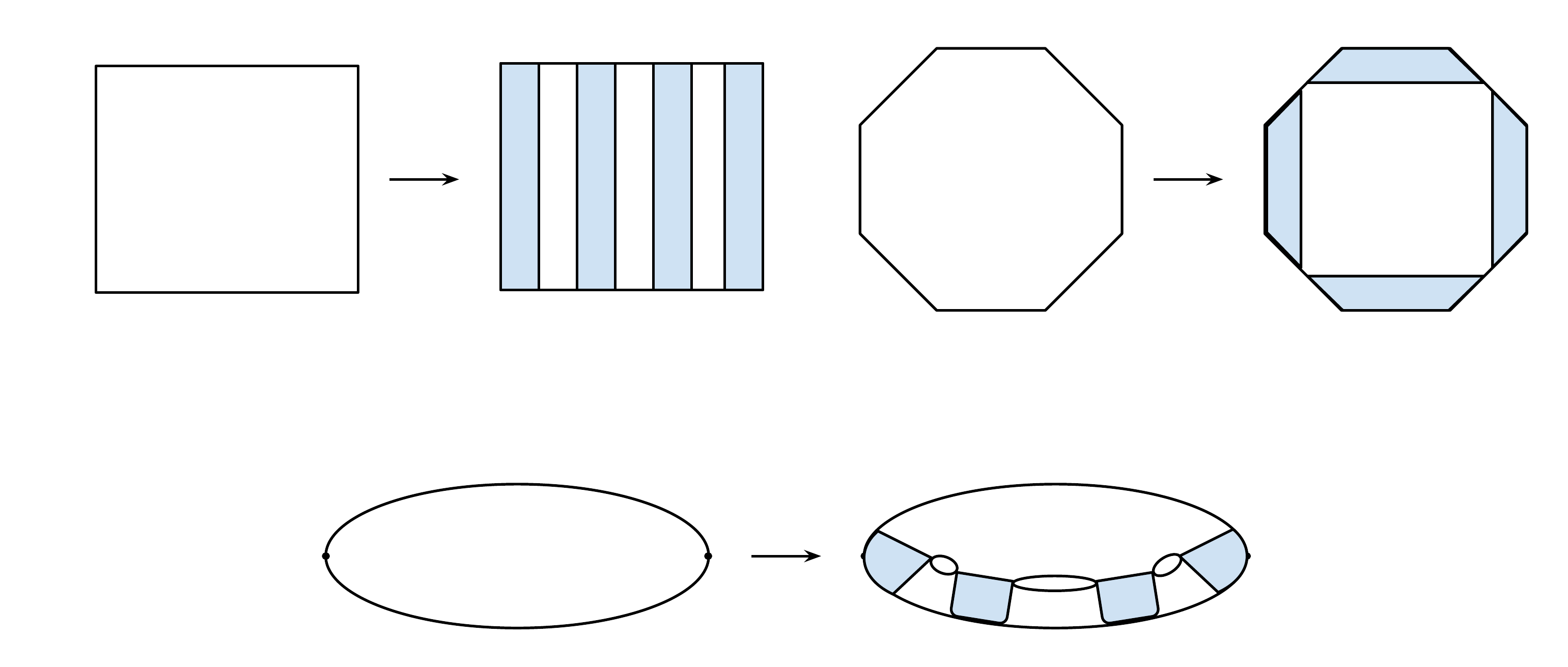}} \caption{The three tile types for the
subdivision rule of $F_2 \times \mathbb{Z}$.}\label{ProductSubdivision}\end{figure}

Taking the cell structure on the sphere determined by the flat edges of $S(n+1,0)$ as our complexes $R^n(X)$, we obtain
the subdivision rule depicted in Figure \ref{ProductSubdivision}, with
initial complex $X$ shown in Figure \ref{ProductS0}. The bigons here can
be seen in Figure \ref{FlatProductS0}, and are a characteristic feature
of the subdivision rules in this paper for manifolds with boundary
(including all non-abelian right-angled Artin groups).

\section{Gluing and collapse}\label{GluingCollapseSection}

In this section, we describe the two main operations we use to construct our sequence of tilings: gluing and collapse. These techniques will eventually be used to prove Theorem \ref{BigTheorem}. While the lemmas in this section hold in some generality, we will set out some standing assumptions that are rather strict.

We assume that the following conditions hold for the complex $X$ and polytopes $P_1,...,P_n$ used in each lemma (all terms will be defined before being used):

\begin{enumerate}
\item the complex $X$ is the boundary $\partial Y$ of a space $Y$, where $Y$ is the union of right-angled polytopes and is homeomorphic to a ball,
\item if they exist, the concave regions of $X$ are all concave $k$-stars for a fixed $k$ depending on $X$,
\item the intersection of any two concave stars contains the intersection of their respective center cells, and that intersection has codimension 1 in each cell, and
\item if there are concave regions their flat ridges are found only in their intersections with other concave regions, and
\item the flat ridges of the concave regions of $X$ form part of a coherent set.
\end{enumerate}

The terms \emph{concave region},\emph{concave star}, and \emph{center cell} will all be defined in Section \ref{ConcaveSection}, while the term \emph{coherent} will be defined in Section \ref{CollapseSection}.

We wish to generalize our two examples to other right-angled objects.
Recall from Section \ref{MethodSection} that there are two essential
requirements that a sequence of spaces $R^n(X)$ needed to satisfy for us
to extract a subdivision rule. For each $n$, we required:

\begin{enumerate} \item $R^n(X)$ is homeomorphic to $X=R^0(X)$, and
\item $R^{n+1}(X)$ is a subdivision of $R^n(X)$ after identifying both
with $X$. \end{enumerate}

Given such a sequence of spaces, we can construct a subdivision rule $S$
that recreates the sequence $R^n(X)$; this subdivision rule will be
finite if there are only finitely many ways that each tile subdivides.

These requirements were satisfied in the cube example. Let's look at
what happens in the general case. In creating the space $B(i,j+1)$ from
$B(i,j)$, we use two operations: \begin{enumerate} \item attaching
fundamental domains to $B(i,j)$ directly (i.e. \textbf{gluing}), and \item
attaching these new domains to each other (i.e. \textbf{collapsing}).
\end{enumerate}

These two processes can be recast as combinatorial operations on the
boundary $S(i,j)$. To obtain a subdivision rule, these operations must
not change the topology of $S(i,j)$ and must allow for a natural way of
embedding each $S(n,0)$ into the next. We describe these two operations
in the following two sections.

\subsection{Gluing and concave regions}\label{ConcaveSection}

When we glue a fundamental domain $A$ onto $B(i,j)$, we take a closed
region $\Omega_A\subseteq \partial A$ which is the union of several facets and attach it to
$B(i,j)$ via some cellular embedding $f:\Omega_A\rightarrow S(i,j)=\partial
B(i,j)$. The size of the attaching region
$f(\Omega_A)\simeq \Omega_A$ is determined by the number of concave ridges. In particular, if two facets $E,F$ of
$S(i,j)$ share a concave ridge $e$, then any fundamental domain $A$ that
is attached to $E$ must also be attached to $F$; otherwise, $e$ would be
contained in at least 5 fundamental domains in the universal cover,
which is not possible.

Thus, we form an equivalence relation on the facets of $S(i,j)$ by
letting $E\sim F$ if $E$ and $F$ share a concave ridge, and then
extending this to the smallest transitive relation satisfying this
condition. The equivalence classes of facets of $B(i,j)$ under this
relation are called \textbf{concave regions}. We call the portion
$\Omega_A$ of a domain $A$ that is attached to a concave region the
\textbf{gluing region}.

The combinatorial effect of gluing a domain $A$ onto a concave region
$f(\Omega_A)$ is to delete the interior of $f(\Omega_A)$ and replace it
with a copy of $A\setminus \Omega_A$.

If we want the topology to stay the same after attaching $A$, then we
require that the gluing region $\Omega_A$ and its closed complement
$\overline{\partial A\setminus \Omega_A}$ are balls.

\begin{lemma}\label{GluingLemma} Let $X$ be a cell complex. If $A$ is a
right-angled polytope with a gluing region $\Omega_A$ isomorphic to a
subcomplex $f(\Omega_A)\subseteq \partial X$ which is a topological
ball, then gluing $A$ to $X$ along $\Omega_A$ does not change the
topology of the boundary $\partial X$.\end{lemma}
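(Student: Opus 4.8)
The plan is to track exactly how gluing $A$ changes the boundary and then argue that the change amounts to swapping one ball for another sharing the same frontier. Write $X' = X \cup_f A$ for the result of the gluing. As described just before the statement, $\partial X'$ is obtained from $\partial X$ by deleting the interior of $f(\Omega_A)$ and inserting a copy of the closed complement $\overline{\partial A \setminus \Omega_A}$. Concretely, set $C = \overline{\partial X \setminus f(\Omega_A)}$ and let $S = \partial(f(\Omega_A))$ be the frontier of the gluing region in $\partial X$. Then
\[
\partial X = C \cup_S f(\Omega_A), \qquad \partial X' = C \cup_S \overline{\partial A \setminus \Omega_A},
\]
where in each case the two pieces are glued along the common sphere $S$, which is simultaneously the frontier of $f(\Omega_A)$ and of $\overline{\partial A \setminus \Omega_A}$. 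Thus $\partial X$ and $\partial X'$ share the piece $C$ and differ only in the ball capping off $S$.

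First I would verify that both capping pieces are balls. The piece $f(\Omega_A) \cong \Omega_A$ is a ball by hypothesis. For the other piece, recall that $A$ is a right-angled polytope, so $A$ is a $d$-ball and $\partial A$ is a $(d-1)$-sphere. Since $\Omega_A$ is a ball embedded as a subcomplex of this sphere, its closed complement $\overline{\partial A \setminus \Omega_A}$ is again a ball, because a sphere with the interior of a tamely embedded ball removed is a ball. Hence both $f(\Omega_A)$ and $\overline{\partial A \setminus \Omega_A}$ are $(d-1)$-balls whose boundary is the common sphere $S \cong S^{d-2}$.

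The final step is to produce the homeomorphism. Because any two balls of the same dimension are homeomorphic and, by the Alexander trick, a homeomorphism of their boundary spheres extends over the balls, there is a homeomorphism $h : f(\Omega_A) \to \overline{\partial A \setminus \Omega_A}$ restricting to the identity on $S$; one obtains $h$ by choosing identifications of both balls with the standard disk and coning off the resulting self-homeomorphism of $S^{d-2}$. Then
\[
\mathrm{id}_C \cup h : \partial X = C \cup_S f(\Omega_A) \longrightarrow C \cup_S \overline{\partial A \setminus \Omega_A} = \partial X'
\]
is well defined, since $h$ is the identity on the shared sphere $S$, and it is a homeomorphism. Therefore $\partial X \cong \partial X'$, as claimed.

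I expect the main obstacle to be the sphere-minus-ball step: asserting that $\overline{\partial A \setminus \Omega_A}$ is a ball is exactly a generalized Schoenflies statement, which in the purely topological category requires the frontier sphere $S$ to be bicollared. In the present setting this is not a genuine difficulty, since $\Omega_A$ is a subcomplex of the polytope $A$ and everything can be arranged in the PL category, where both the Schoenflies fact and the Alexander-trick extension are standard; but a careful writeup should flag the tameness of $S$ so that these two inputs are justified rather than silently assumed.
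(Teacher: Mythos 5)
Your proof is correct and follows essentially the same route as the paper's: the paper's own argument is a one-line version of exactly this reasoning ($\partial A$ is a sphere, $\Omega_A$ is a ball, hence $\overline{\partial A\setminus\Omega_A}$ is a ball, so one ball is swapped for another along the common frontier). Your writeup simply fills in the details the paper leaves implicit --- the explicit decomposition along $S$, the PL Schoenflies/Newman input for the complementary ball, and the Alexander-trick extension of the boundary identification --- all of which are appropriate and correctly flagged.
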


\begin{proof} Since
$A$ is an abstract polytope, its boundary is a CW complex that has the
topology of a sphere. Thus, since $\Omega_A$ is a closed ball, the
complement $\overline{\partial A\setminus \Omega_A}$ is also a closed
ball, so the gluing operation does not change the topology.
\end{proof}

In the proof of Theorem \ref{BigTheorem}, we will use Lemma
\ref{GluingLemma} to show that the topology of the spheres $S(i,j)$ is
the same for all $i$ and all $j$.

In all cases, we will show that the concave regions are actually
\textbf{concave stars}. A concave $k$-star is a concave region
consisting of all facets on a polytope that contain a fixed cell $C$ of
codimension $k$ called the \textbf{center cell} of the concave star. In
right-angled objects, Lemma \ref{PolytopeLemma} shows that there are exactly $k$ facets
containing any fixed cell of codimension $k$ in the polytope. In the absence of
1- or 2-circuits, or prismatic 3-circuits, concave $k$-stars are topologically
balls, because they can be constructed by repeatedly attaching balls along smaller balls.

As a degenerate case, if there are no concave ridges in the complex $X$, we
say that each individual facet is a \textbf{concave 1-star}, and is its
own center cell. Thus, in Figures \ref{StageS11} and
\ref{Stage21Prime}-\ref{Stage21}, we say that a domain has been glued
onto each concave 1-star.

\subsection{Collapse and coherence}\label{CollapseSection}

The second operation in constructing the universal cover is \textbf{collapse}.
When we attach a fundamental domain to $S(i,j)$ as part of creating
$S(i,j+1)$, it may happen that some ridges of $S(i,j)$ were flat. As in
Figures \ref{Stage21Prime}-\ref{StageS22}, this causes the new facets
that intersect the old flat ridge to collapse. When we say that facets
`collapse', we are describing a combinatorial operation in a tiling of the
sphere where we delete the `collapsing' facets and the collapsing ridge
and identify their remaining boundaries. We can identify their boundaries because
the collapsing facets are combinatorially the same facet. This operation makes all
the ridges that are identified flat. See Figure \ref{GoodCollapse}.

 \begin{figure}
\scalebox{.5}{\includegraphics{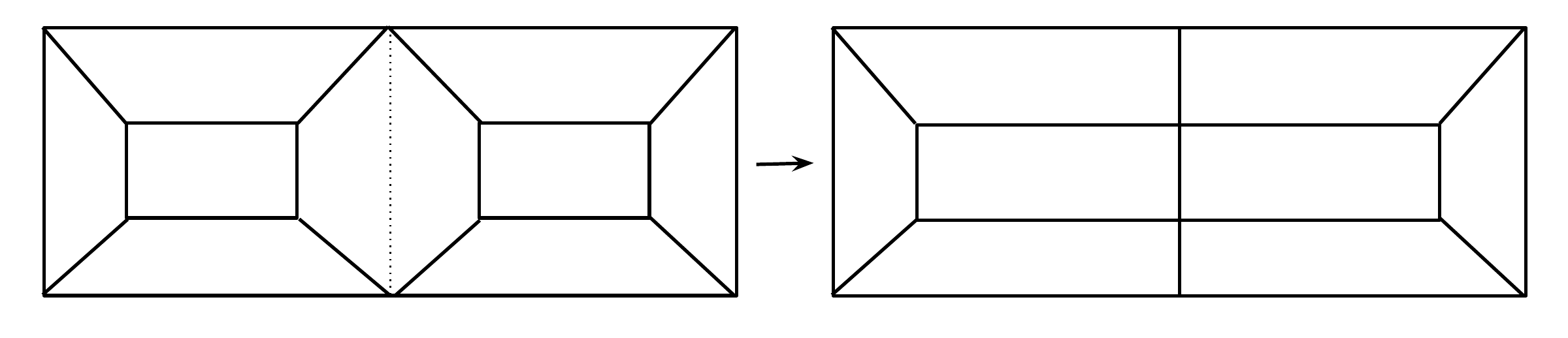}} \caption{A collapsing ridge. The dotted line
on the left is flat; the ridge and the facets containing it are deleted,
and their boundary identified to get the picture on the
right.}\label{GoodCollapse}\end{figure}

This operation may change the topology if all of the facets containing an
interior cell of codimension 2 or greater collapse (see Figure
\ref{BadCollapse}).

 \begin{figure}
\scalebox{.45}{\includegraphics{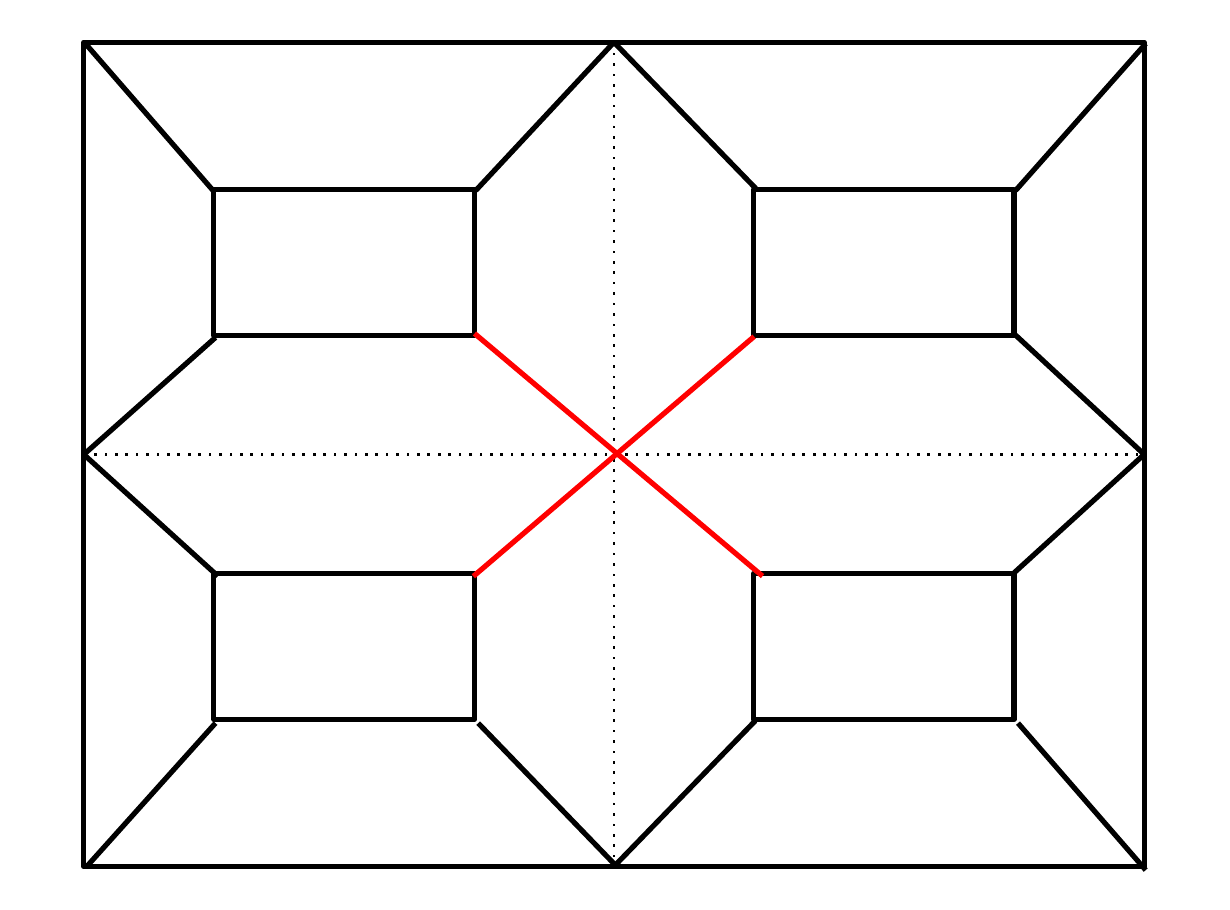}}\caption{The four lines in the center have
flat facets on either side. Deleting those flat facets and identifying
their boundaries leaves the four lines identified and gives a space that
is not a manifold.}\label{BadCollapse} \end{figure}

There is a simple solution. In Figure \ref{BadCollapse}, note that we can
delete all 4 ridges in the center and proceed to identify the remaining
boundary ridges as before. In the universal cover, this corresponds to
Figure \ref{CollapsingEdge}.

 \begin{figure}
\scalebox{.45}{\includegraphics{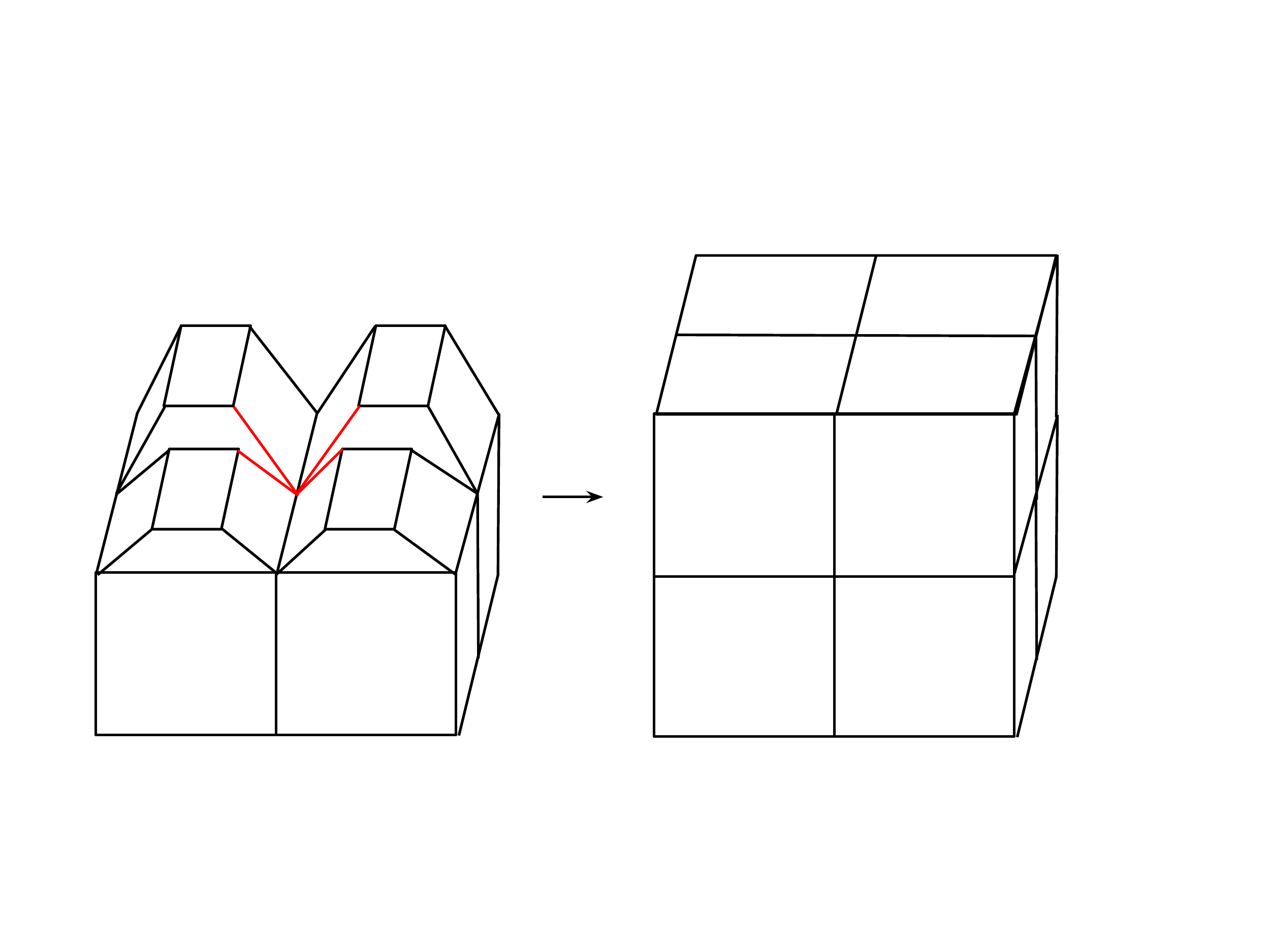}}\caption{All four lines in the center have
flat facets on either side. The correct idea is that when those flat
facets are deleted, the lines in the middle are deleted as
well.}\label{CollapsingEdge}\end{figure}

Notice how the four ridges disappear; this is because we've identified a
complete set of four fundamental domains around the ridge that they all
correspond to in the universal cover, and so the ridge is covered up.

In general, when facets collapse, we delete the collapsing facets and all
cells where every facet containing them is collapsing (i.e. the interior
of the union of the closed collapsing facets), then identify the
remaining boundary ridges. As in Figures \ref{Stage22Prime} and
\ref{StageS22} of Section \ref{CubeSection}, each facet that collapses
may have more than one flat ridge.

We identify the remaining boundary ridges of each collapsed facet $A$
with the remaining boundary ridges of the facet $A'$ it collapsed onto.
This makes the identified ridges flat or ideal flat. Notice that we
can think of this collection of new flat ridges as replacing the old
flat ridges. Thus, this gives us a natural map from the flat ridges of
$S(i,j)$ to the flat ridges of $S(i,j+1)$. This natural map is just
extending the wall that defines the flat ridges.

However, our identification as described above fails if different cells are deleted from $A$ and $A'$. Recall that we delete all
flat facets and all cells where every facet containing them is flat. Thus,
a boundary cell of $A$ is deleted if every other facet containing it is
flat. The same is true for $A'$. Thus, to show that $A$ and $A'$ have
the same boundary cells deleted, it suffices to show that they have the
same set of collapsing neighbors (i.e. if $e$ is a ridge of $A$ and $e'$
is the corresponding ridge of $A'$, then the other facets $B$, $B'$
containing $e$ and $e'$ are either both flat or both non-flat).

\begin{defi} We say that a set $E$ of flat ridges is \textbf{coherent}
if given $e$ in $E$, the reflection of $e$ across any other ridge
intersecting $e$ is also flat. Here, the \textbf{reflection} of a ridge
$e$ in a facet across a ridge $f$ is the unique ridge $e'$ which
intersects $e$, is distinct from $e$, and is a ridge of the facet
containing $f$ which does not contain $e$. \end{defi}

\begin{defi}
The \textbf{flat structure} of a complex $X$ or of a set of ridges $E$ of $X$ is the cell structure given by the flat ridges alone. It can be obtained by identifying any two facets that share a non-flat ridge.
\end{defi}

We summarize and expand on the above discussion with the following:

\begin{lemma}\label{CoLemma} Let $E$ be a set of flat ridges of a
complex $X$ satisfying our standing assumptions. Then collapsing each pair of facets containing a ridge of $E$ gives a new complex $X'$ with the
same topology as $X$, and $X'$ contains a coherent set $E'$ of ridges
whose flat structure is cellularly isomorphic to a subdivision of the flat structure of $E$. \end{lemma}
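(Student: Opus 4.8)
The plan is to verify the three assertions of the lemma in turn---that the collapse preserves topology, that the resulting set $E'$ of new flat ridges is coherent, and that its flat structure refines that of $E$---working locally throughout and using Lemma \ref{PolytopeLemma} to identify cells with intersections of facets. For topology preservation, recall that covering up a flat ridge amounts to gluing a fundamental domain across it, and by standing assumption (1) the complex $X$ is the boundary of a ball $Y$. Away from the degenerate situation of Figure \ref{BadCollapse}, each such gluing covers a single flat ridge and, by the proof of Lemma \ref{GluingLemma}, preserves the ball structure of $Y$ and hence the topology of $\partial Y = X$; locally the identification of the two collapsing facets across the flat ridge is a fold of a manifold across a codimension-one wall, which remains a manifold. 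The only obstruction is the bad collapse, where every facet around an interior cell of codimension $\ge 2$ collapses; the collapse operation is defined precisely so as to also delete those cells (the interior of the union of the closed collapsing facets, as in Figure \ref{CollapsingEdge}), which corresponds to completing a set of four domains around a ridge so that the ridge is covered up. With these cells removed, every remaining identification is a local fold, and the topology of $X'$ agrees with that of $X$.

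Next I would establish that the identification of remaining boundaries is well defined, which is the crux. As noted before the statement, collapsing a pair $A, A'$ across a flat ridge $e = A\cap A'$ is consistent precisely when $A$ and $A'$ have the same collapsing neighbors: for corresponding ridges $g\subset A$ and $g'\subset A'$, the second facet containing $g$ should be flat if and only if the second facet containing $g'$ is. I would match $g$ with $g'$ using the right-angled combinatorics of Lemma \ref{PolytopeLemma}, and then observe that the second facet through $g'$ is exactly the reflection across $e$ of the second facet through $g$; the coherence hypothesis (standing assumption (5), applied to $E$) guarantees that such reflections carry flat ridges to flat ridges, so the same cells are deleted from $A$ and from $A'$ and the identification goes through, producing a bona fide complex $X'$.

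Finally I would define $E'$ to be the set of new flat ridges, namely the identified boundary ridges of the collapsed pairs, which are exactly the extensions of the walls defining $E$; this is the natural map from flat ridges of $X$ to flat ridges of $X'$ described before the lemma. Coherence of $E'$ follows by the same reflection analysis: a ridge of $E'$ is an extension of a ridge of the coherent family $E$, its reflections across meeting ridges are again extensions of flat ridges from that family, and one checks the finitely many local models of a collapse (Figures \ref{GoodCollapse} and \ref{StageS22}) to confirm that no new non-flat reflection appears. For the subdivision statement, recall that the flat structure is obtained by identifying facets that share a non-flat ridge. Using standing assumptions (2)--(4), each top cell of the flat structure of $E$ is a concave $k$-star, its flat ridges lie only in its intersections with neighboring stars, and distinct stars meet along their center cells in codimension one; this lets me exhibit a cellular map from the flat structure of $E'$ onto that of $E$ which is the identity on the preserved combinatorics and cuts each old cell along the new flat ridges created inside it, realizing the flat structure of $E'$ as a subdivision of that of $E$.

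The step I expect to be the main obstacle is the well-definedness argument together with the verification that $E'$ is again coherent: both hinge on controlling how the reflection operation on ridges interacts with the collapse, and the bookkeeping is delicate because a single collapsing facet may carry several flat ridges at once (as in Figure \ref{StageS22}), so the matching of deleted cells between $A$ and $A'$ must be checked simultaneously across all of them rather than one ridge at a time.
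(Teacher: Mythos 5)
Your overall route is essentially the paper's: a local topological argument for the collapse, a reflection-based analysis to show the new flat ridges are coherent, and the observation that the identified boundary ridges replace the single old collapsing ridge, giving the subdivision of the flat structure. The well-definedness point you isolate (that $A$ and $A'$ must have the same boundary cells deleted, which follows from coherence of $E$ via reflections) is exactly the point the paper makes in the discussion preceding the definition of coherence.

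There is, however, a genuine gap in your topology argument. After handling the degenerate configuration of Figure \ref{BadCollapse} by deleting the appropriate interior cells, you assert that ``every remaining identification is a local fold'' and conclude that the topology of $X'$ agrees with that of $X$. Local correctness is not sufficient here: the collapse is a global quotient, and it can fail to preserve topology even when each individual identification is locally a fold. The dangerous configuration is the one in Figure \ref{GlobalCollapse}: a ridge that meets no ridge of $E$ but is contained in two \emph{different} collapsing facets. Each of those facets is identified with its own partner, and the shared ridge then forces an identification between boundary ridges of the two partners, which may be far apart in $X$; the resulting quotient need not be a manifold of the same topological type. Your proposal never rules this configuration out. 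The paper does so using the standing assumptions: the ridges of $E$ lie in the intersections of the glued polytopes with their gluing regions, so a ridge disjoint from $E$ lying in two collapsing facets would force a prismatic $3$-circuit, which is prohibited. Without an argument of this kind, the step from ``locally a fold'' to ``the topology of $X'$ agrees with that of $X$'' does not follow. A smaller weakness: your coherence check for $E'$ defers to ``finitely many local models,'' whereas one genuinely needs a case split according to whether the ridge being reflected across came from collapse, survived from a previous stage, or is newly created; the last case is not merely an extension of the coherence of $E$ along walls, since the reflected ridge is another boundary ridge of the collapsed facet itself.
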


\begin{proof}
By our standing assumptions, the set of collapsing ridges of each facet $A$ that collapses is a ball, and is homeomorphic to its complement in the boundary of $A$. Thus, collapse doesn't change the topology locally. However, we must show that it is globally well-defined, that is, that it does not have singularities such as those shown in Figure \ref{GlobalCollapse}. But the set $E$ is a subset of the intersections of the cubes with their gluing region, so each ridge that is disjoint from $E$ can only be contained in one facet containing a ridge of $E$, as otherwise we would have a prismatic 3-circuit. This ensures that collapse is globally well-defined.

 \begin{figure}
\scalebox{.45}{\includegraphics{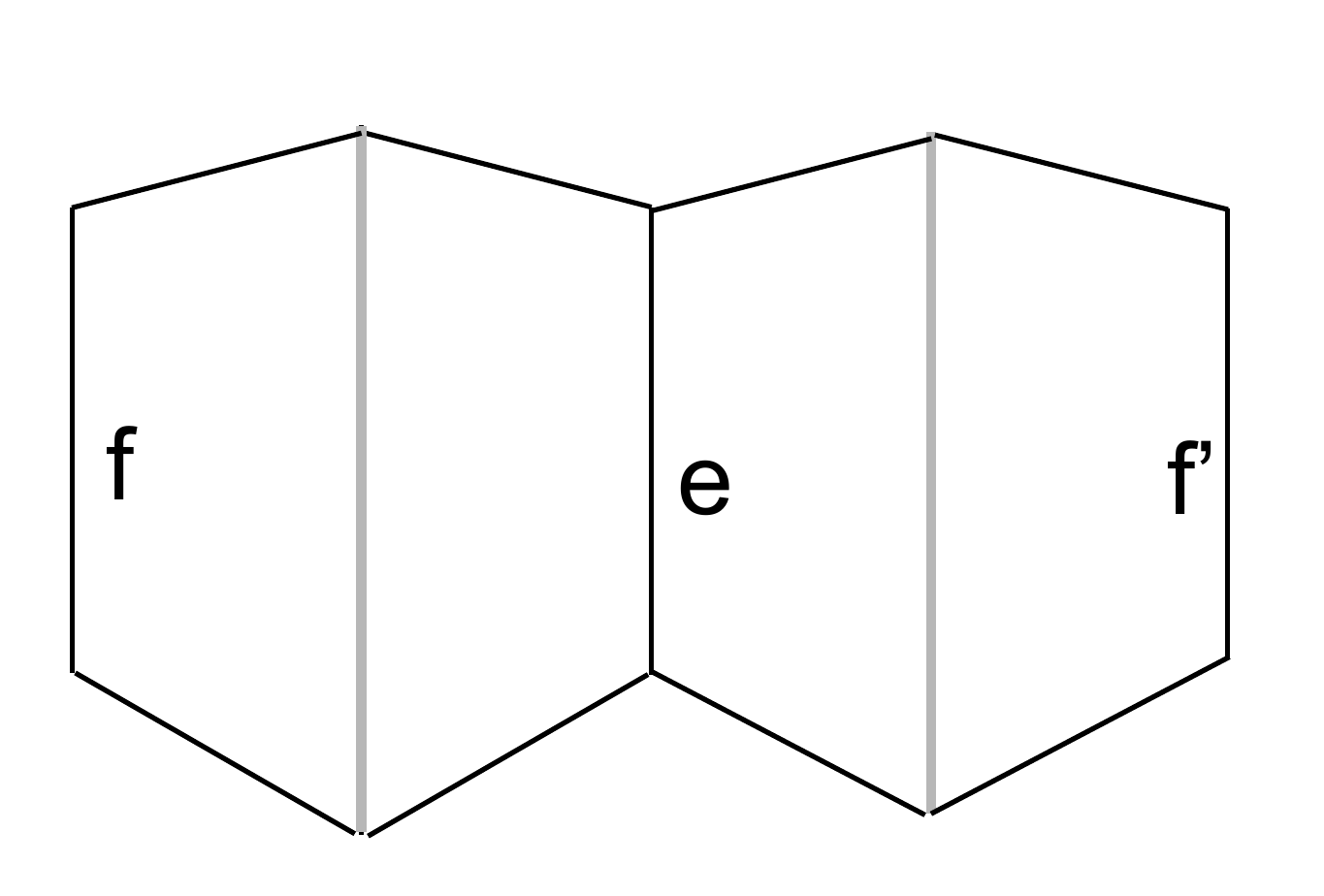}} \caption{If a ridge not intersecting any
collapsing ridge is contained in two different collapsing
facets, it allows cells that are far apart to be identified (such as the ridges $f$,$e$, and $f'$ here).
}\label{GlobalCollapse}\end{figure}

As described above, we delete the interior of the union of all
collapsing facets, and \emph{identify the boundary ridges of collapsing facets
to the boundary ridges of the facets they collapse to}. By our assumptions, the set of collapsing ridges of a facet is a concave star of the boundary of the facet, and so is a single ridge in the flat structure. After collapse, there is at least one and possibly more ridges in the flat structure.  By coherence, all flat structure that existed previously is preserved. Thus we can think of these newly flat (or flat ideal) ridges as a \emph{subdivision} of the
single collapsing ridge that was between the two collapsing facets.

Now we show that the new flat ridges are coherent. Let $A$ be a facet
with a ridge $e$ of any kind and a flat or flat ideal ridge $f$ that
came from collapse. Then either $e$ came from collapse or it did not.

If $e$ came from collapse, then $f$ and $e$ both replaced collapsing
ridges $f'$ and $e'$. But the set of collapsing ridges was coherent, so
the reflection of $f'$ across $e'$ was another collapsing ridge $g'$,
which gives a flat ridge $g$ that is the reflection of $f$ across $e$.

If $e$ did not collapse and is a ridge from a previous stage, then, as
in the previous case, let $f'$ be the ridge that collapsed to make $f$
flat. Then $e$ existed in the previous stage and the reflection $g'$ of
$f'$ across $e$ was flat in that stage by coherence, so it collapsed to
give a flat ridge $g$ which is the reflection of $e$ across $f$.

If $e$ did not collapse and is a new ridge, then let $A$ be the facet that contained $f$ and
collapsed to make $f$ flat. Then $e$ intersected two of the ridges of
$A$, one on either side of $e$. One was $f$, and we'll call the other
$g$. Then when $A$ collapsed, $f$ and $g$ both became flat. Since $g$
was the reflection across $e$ of $f$, this shows that they are still
coherent and completes our proof. \end{proof}

Coherence is an essential tool in the proof of Theorem \ref{BigTheorem},
because it allows us to think of the new flat ridges as a subdivision of
the old flat ridges. The following lemma will be useful in the proof:

\begin{lemma}\label{CoherenceInduction} Assume that a complex $X=\partial Y$, where $Y$ is the union of right-angled polytopes which is topologically
a ball. If there are no concave ridges, then the set of all flat ridges
of $X$ is a coherent set. \end{lemma}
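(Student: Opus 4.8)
The plan is to reduce coherence to a purely local statement at the codimension-$3$ cells of $X$ and then settle it by a short finite check in the link. First I would observe that if $e$ and $f$ are distinct ridges lying in a common facet $F$, then by Lemma \ref{PolytopeLemma} their intersection $C=e\cap f$ is a single cell of codimension $3$, and the reflection $e'$ is the ridge through $C$, other than $f$, of the second facet $G$ containing $f$. This $G$ is well defined because $f$ lies on $\partial Y$ and hence (in each of the convex, flat, and concave cases) is contained in exactly two facets of $X$. Since coherence constrains only the triple $e,f,e'$ and their shared cell $C$, and since the neighborhood of a codimension-$3$ cell is the product of the three-dimensional corner picture with a cube of dimension $d-3$ (exactly as in the local-picture discussion for ridges), it suffices to treat $d=3$. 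There $C$ is a vertex, the ridges through $C$ are the six half-axes, the facets are the twelve coordinate quadrants, and the fundamental domains meeting $C$ are the eight octants; the link of $C$ in $X$ is a subcomplex of the boundary of an orthoplex (an octahedron), recorded by the collection $\mathcal{O}$ of octants that actually appear in $Y$.

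Next I would translate the combinatorial data into conditions on $\mathcal{O}$. A half-axis is present in $X$ exactly when its four surrounding octants are neither all in $\mathcal{O}$ (covered) nor all absent, and it is convex, flat, or concave according as $1$, $2$, or $3$ of those octants lie in $\mathcal{O}$. The hypothesis of no concave ridges becomes the statement that no half-axis has exactly three of its four incident octants in $\mathcal{O}$. The hypothesis that $X=\partial Y$ is a manifold bounding a ball becomes the statement that the two octants of any flat ridge are \emph{adjacent}, i.e.\ share a facet, since otherwise the link of that ridge is disconnected and $X$ fails to be a manifold there.

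With this dictionary I would run the enumeration. Normalizing $e$ to be the $+x$-axis, flatness of $e$ fixes its two octants to be an adjacent pair, say $(+,+,+)$ and $(+,+,-)$, with $(+,-,+)$ and $(+,-,-)$ absent; by the evident $z\leftrightarrow -z$ symmetry we may take $f=+z$. There are then two cases according to whether $f$ is convex or flat. If $f$ is convex, the reflection is $e'=+y$, and the two octants of $e$ already force two of the four octants around $+y$ into $\mathcal{O}$; the no-concave condition at $+y$ forbids a third, and manifoldness supplies adjacency, so $e'$ is flat. If $f$ is flat, the reflection is $e'=-x$, the continuation of $e$ along its wall; here the no-concave condition applied first at $+y$ and then at $-x$ forces exactly two adjacent octants around $-x$, so again $e'$ is flat. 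The only remaining possibility, two non-adjacent octants around $f$, is precisely the non-manifold pinch excluded by the ball hypothesis. Thus in every admissible configuration $e'$ is present and has exactly two adjacent surrounding octants, hence is flat, which is coherence.

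The main obstacle I anticipate is not any single case but pinning down which hypotheses force the surrounding count for $e'$ to be exactly two. One must use the no-concave assumption to rule out $e'$ being concave (count three) or covered (count four), the fact that $e'$ bounds the boundary facet $G$ to rule out its being absent (count zero), and the manifold/ball hypothesis both to exclude the pinched configurations and to guarantee that the two octants of a flat ridge are genuinely adjacent. Once these exclusions are in force, the conclusion that $e'$ is flat follows immediately from the enumeration, and since $C$ was an arbitrary codimension-$3$ cell and $e$ an arbitrary flat ridge, the full set of flat ridges of $X$ is coherent.
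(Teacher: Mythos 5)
Your proof is correct and takes essentially the same approach as the paper: both localize coherence to the octahedral link of a codimension-$3$ cell and settle it by a finite combinatorial check using the no-concave-ridges hypothesis. The only difference is organizational --- the paper classifies the three possible proper links without concave vertices (single octant, two adjacent octants, half-octahedron) and reads off that flat vertices pair up under reflection, whereas you run the case analysis directly on the pair $(e,f)$, and in doing so you are somewhat more explicit than the paper about where the ball/manifold hypothesis is needed to exclude the pinched, non-adjacent octant configurations.
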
 \begin{proof}

Define the \textbf{link} of a codimension-3 cell $K$ in $Y$ to be the
simplicial complex whose 0-skeleton is given by ridges containing $K$,
whose edges correspond to facets containing $K$, and whose facets
correspond to fundamental domains containing $K$. Alternatively, the link
of $K$ is the complex one obtains by taking a sufficiently small
2-sphere orthogonal to $K$ and centered at an interior point of $K$, and
looking at the cell structure it inherits from its intersection with
$Y$.

In a right-angled manifold, the link of codimension-3 cell is always an
octahedron, unless it is ideal, in which case the link is half or a fourth of an octahedron. Since $Y$ may have some unidentified facets, the
link will generally be a subset of the octahedron or half-octahedron or
quarter-octahedron. Now, the subset of the octahedron is itself a
right-angled object, and its vertices are convex, flat, concave, or
covered up exactly when the ridges they correspond to are. So if the
link has a vertex which is contained in exactly 3 triangles of the link,
that corresponds to a concave ridge in $X$.

 \begin{figure}
\scalebox{.25}{\includegraphics{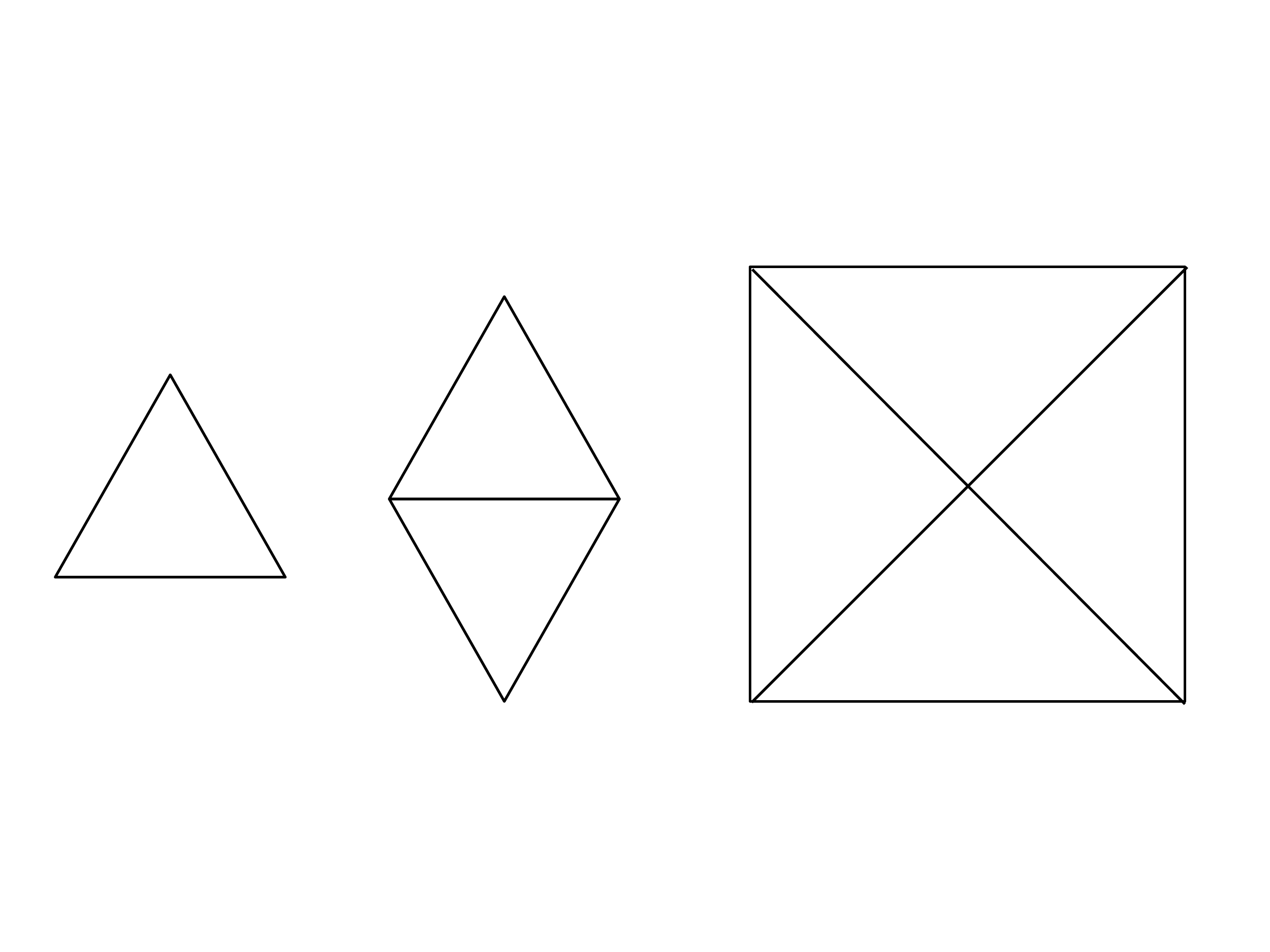}} \caption{These three subsets of the
octahedron are the only proper subsets of the octahedron without concave
vertices.}\label{ConvexSubsets}\end{figure}

Now, we are assuming that there are no concave ridges in $X$. Thus, the
link of every codimension-3 cell has no concave vertices. Figure
\ref{ConvexSubsets} shows all possible links without concave vertices
(the octahedron itself is another possibility, but if the link is a full
octahedron, the cell is not in the boundary of $Y$). Notice that in all
of these subsets, the flat vertices come in opposite pairs. This implies
that the corresponding flat ridges are coherent. Thus, in $X$, because
there are no concave ridges, the flat ridges are coherent.
 \end{proof}

\subsection{The interaction between gluing and collapsing}

The following technical lemmas describe how gluing regions and
collapsing facets interact in certain special situations that will arise
in the proof of Theorem \ref{BigTheorem}.

\begin{lemma}\label{FlatIntersection} Let $X$ be as before, and let $E_1,...,E_n$ be the concave regions of $X$. Assume that we glue on a polytope $P_i$ to each concave
star $E_i$ and collapse all formerly flat ridges to create a complex
$X'$. Then the flat ridges of the
concave regions of $X'$ are only found in the intersection with other
concave regions. \end{lemma}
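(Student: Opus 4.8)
The plan is to reduce the statement to a single local assertion and then establish that assertion with the coherence machinery of Lemma \ref{CoLemma}. Observe first that a flat ridge $f$ of $X'$ lies in exactly two facets $F_1,F_2$, and that every facet of $X'$ belongs to exactly one concave region (in the degenerate case a lone facet is its own concave $1$-star). Hence $f$ is contained in a concave region precisely when $F_1$ or $F_2$ lies in that region, and $f$ lies in the intersection of two distinct concave regions precisely when $F_1$ and $F_2$ lie in different regions. So the lemma is equivalent to the assertion that no flat ridge of $X'$ is shared by two facets of one and the same concave region, and this is the form I would prove.

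Next I would pin down the concave regions of $X'$. Gluing $P_i$ onto the concave $k$-star $E_i$ covers up its center cell $C_i$ and, following the progression of Figure \ref{UnloadedProgression}, converts the convex ridges on the boundary of $E_i$ into concave ridges of $X'$. Using Lemma \ref{PolytopeLemma} to count the facets through a cell, these new concave ridges assemble into concave $(k+1)$-stars, each centered on a codimension-$(k+1)$ cell $C'$ coming from the old convex structure. The key structural point is that within such a star any two facets meet in a single ridge containing $C'$ (again by Lemma \ref{PolytopeLemma}), and every ridge through $C'$ is concave, since $C'$ is the cell next to be covered up. Consequently a flat ridge internal to a concave region of $X'$ would have to be a ridge through some center cell $C'$, and it would then be concave, a contradiction. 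Thus the reduction follows once I show that no flat ridge of $X'$ passes through a center cell.

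To locate the flat ridges I would invoke Lemma \ref{CoLemma}: the flat ridges of $X'$ form a coherent set whose flat structure is a subdivision of the flat structure of $X$, and by standing assumption (4) the flat ridges of $X$ occur only in the intersections $E_i\cap E_j$. Tracing the collapse, each new flat ridge is created where the new facets of $P_i$ and $P_j$ are identified over such an intersection, so it sits over $E_i\cap E_j$ and is disjoint from the convex cells that become the new center cells $C'$. Coherence then guarantees that this flatness reflects only into further ridges lying over the intersections and never toward a $C'$. Hence no flat ridge meets a center cell, which by the reduction above completes the argument.

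The main obstacle is exactly this last localization: showing that the flatness created by collapse stays over the intersections $E_i\cap E_j$ and cannot leak to a cell that becomes a new center. This is where coherence, controlling how a flat ridge reflects across its neighbors, and the prohibition on prismatic $3$-circuits, which (as in the proof of Lemma \ref{CoLemma} and Figure \ref{GlobalCollapse}) prevents collapses coming from different stars from meeting and forcing a spurious identification, carry the weight. Verifying that the $(k+1)$-star description of the concave regions of $X'$ holds in full generality, rather than only in the two worked examples, is the delicate bookkeeping on which I would expect to spend the most care.
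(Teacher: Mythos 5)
Your opening reduction replaces the lemma with a weaker statement. By declaring every facet to be its own concave $1$-star, you make ``the two facets of $f$ lie in different regions'' automatically true whenever one of them has no concave ridge at all, so your reduced assertion (no flat ridge is shared by two facets of the \emph{same} region) does not exclude the configuration of a flat ridge $e$ with a facet $A$ carrying a concave ridge on one side and a facet $A'$ carrying no concave ridge on the other. That configuration is exactly what the lemma must rule out: it is what guarantees that at the next stage every flat ridge of a concave region has a polytope glued onto \emph{both} of its facets simultaneously and therefore collapses, which is how claim (4) is used in the induction of Theorem \ref{BigTheorem} and inside the proof of this very lemma. The paper's argument is aimed squarely at that case: it takes a facet $A$ with a concave ridge $f$ and a flat ridge $e$, shows $e$ must arise from the collapse of a neighboring facet $B$ of $A$'s polytope (the alternative produces a prismatic $2$- or $3$-circuit), shows $e$ and $f$ intersect via the $3$-circuit $A,B,C$ with $C$ in the gluing region, and then uses the symmetry of the collapse identification of $B$ with $B'$ to conclude that the facet $A'$ across $e$ also meets its own gluing region and hence also has a concave ridge. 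None of this transfer from $A$ to $A'$ appears in your argument.

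The machinery you do invoke is also either circular or misapplied. That the concave regions of $X'$ are concave $(k+1)$-stars is Lemma \ref{StarLemma}, whose proof uses Corollary \ref{EasyCor}, which is itself extracted from the proof of the present lemma; you cannot assume it here without an independent argument. Your key local claim --- that two facets of one concave star of $X'$ meet only in a ridge through the new center cell, ``again by Lemma \ref{PolytopeLemma}'' --- is not licensed by that lemma, which concerns facets of a single polytope, whereas the facets of a concave star of $X'$ come from different glued polytopes. Finally, the localization step (that the flat ridges created by collapse stay over $E_i\cap E_j$ and never reach a cell that becomes a new center) is precisely the hard point; you acknowledge it but offer only an appeal to coherence, which by itself does not deliver it.
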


\begin{proof} We first show that a ridge
$e$ in $X'$ can only be concave if it existed in $X$ as a convex ridge
in the intersection of two concave stars. We do this by eliminating all other
possibilities.

If $e$ was a flat ridge in $X$, then by hypothesis, both facets of $X$
containing it were part of concave stars, and both had polyhedra
attached to them; thus, the flat ridge collapses, and all new ridges
resulting from the collapse are flat.

If $e$ was convex in $X$ but only one facet containing it was part of a
concave region, then $e$ would have become flat after gluing on
polytopes, and not concave.

Thus, $e$ was a convex ridge in $X$ in the intersection of two concave
regions, and it had polytopes glued onto both sides.

Now, we prove that the flat ridges of the concave regions of $X'$ are
only found in the intersections between different concave
regions.

Let $A$ be a facet of $X'$ with a flat ridge $e$ and a concave ridge $f$.
Because $A$ has a concave ridge, $A$ must be a facet of a polytope $P$
glued onto a concave region of $X$. The flat ridge $e$ must have
come from either the boundary of a facet that collapsed or from another intersection of $A$ with the gluing region. But the second possibility would imply the existence of a prismatic 2- or 3-circuit in the polytope. Thus, the flat ridge must have come from the collapse of a neighboring facet $B$ in the polytope (see Figure \ref{FlatEdgeOfPair}).

 \begin{figure}
\scalebox{.4}{\includegraphics{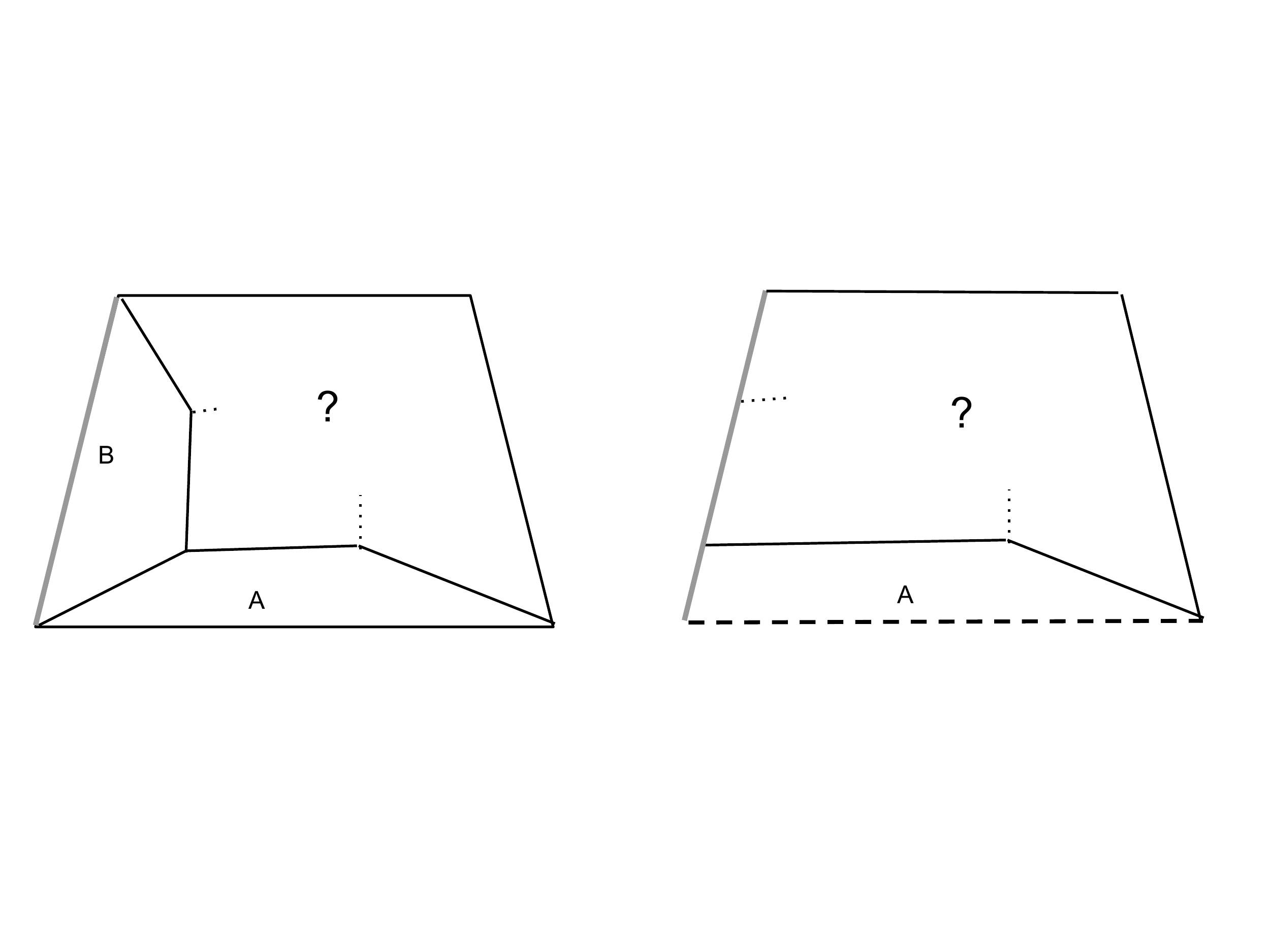}}\caption{The flat ridge of $A$ must have
come from a collapsing neighbor $B$.}\label{FlatEdgeOfPair}\end{figure}

Let $A'$ be the other facet of $X'$ containing the flat ridge $e$. Like
$A$, the facet $A'$ is also part of its own polytope glued onto a concave
region of $X$, and it must have had a neighbor $B'$ that collapsed with
$B$ to form the new flat ridge $e$. To prove the lemma, it suffices to
show that $A'$ has at least one concave ridge.

We first show that $e$ and $f$ intersect each other. Because $f$ is concave, there is a facet $C$ of the gluing region such that $f=A\cap C$. But the facet $B$
that collapsed intersected \emph{every} facet of the gluing region, by
hypothesis. Thus, $A,B,$ and $C$ form a 3-circuit.
Since it cannot be prismatic, the ridges $e$ and $f$ intersect each
other.

Finally, we show that $A'$ (the facet of $X'$ that shares the ridge $f$
with $A$) also has a concave ridge. Now, when $B$ and $B'$ collapsed,
their interiors and their common ridges were deleted and their remaining
boundaries were sewn together, fixing the portion of each boundary that
intersected the collapsing ridge. By coherence, the remaining boundaries
that are glued together are identical. Note that the ridge $g$ of $A$
that collapsed to become $f$ intersected the boundary of its polytope's
gluing region. By symmetry, this means that the ridge $g'$ of $B'$ that
became $f$ intersected the boundary of its polytope's gluing region.
This means that $A$ and $A'$, the facets across $g$ and $g'$ from $B$ and
$B'$, respectively, must also have intersected the gluing region (since
they contained $g$ or $g'$), meaning that each has a concave ridge in
$X'$. This concludes our proof. \end{proof}

The following corollary will be used in the proof of Lemma
\ref{StarLemma}, our final lemma before Theorem \ref{BigTheorem}.

\begin{cor}\label{EasyCor} If a facet $A$ in the complex $X'$ of Lemma
\ref{FlatIntersection} has a concave ridge, then $A$ is a facet of one of
the new polytopes $P_1,...,P_n$ glued onto $X$. \end{cor}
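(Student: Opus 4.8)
The plan is to observe that this corollary is essentially an immediate consequence of the intermediate result already established at the start of the proof of Lemma \ref{FlatIntersection}. Recall that the first paragraph of that proof shows that a ridge $e$ of $X'$ can be concave only if it existed in $X$ as a convex ridge lying in the intersection of two distinct concave stars. I would take this as the starting point and simply trace through where the facets containing such a ridge can come from.

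First I would let $A$ be a facet of $X'$ with a concave ridge $e$, and apply the classification above to conclude that $e$ was a convex ridge of $X$ shared by two facets $F_i$ and $F_j$, where $F_i$ belongs to a concave star $E_i$ and $F_j$ belongs to a (possibly different) concave star $E_j$. The point is that both facets of $X$ meeting along $e$ were part of concave regions and hence both had polytopes glued onto them.

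Next I would recall the combinatorial description of gluing from Section \ref{ConcaveSection}: gluing a domain $P_i$ onto a concave region deletes the interior of the gluing region $f(\Omega_{P_i})$ and replaces it with a copy of $\overline{\partial P_i \setminus \Omega_{P_i}}$. Consequently the facets of $X'$ split into two kinds: the facets of $X$ that survive because they were \emph{not} part of any concave region, and the new facets contributed by the glued polytopes $P_1,\dots,P_n$. Since both facets $F_i,F_j$ of $X$ that contained $e$ lay in concave stars, they were among the deleted facets and do not survive into $X'$. Therefore no surviving facet of $X$ contains $e$, and so the facet $A$ of $X'$ carrying $e$ must be one of the new facets, i.e.\ a facet of some $P_k$, as claimed.

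I do not expect a genuine obstacle here, which is consistent with the corollary being labeled as easy; the only point requiring a little care is the enumeration of the facets of $X'$ and the verification that the two facets of $X$ meeting along $e$ are genuinely deleted rather than merely relabeled. This is handled by the explicit gluing operation of Section \ref{ConcaveSection}, so the argument closes immediately once the facet $A$ is forced to be new.
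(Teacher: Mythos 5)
Your proposal is correct and follows essentially the same route as the paper: both start from the intermediate fact in the proof of Lemma \ref{FlatIntersection} that a concave ridge of $X'$ must have been a convex ridge of $X$ in the intersection of two concave stars, and both conclude that since polytopes are glued onto both sides of such a ridge, the facets of $X'$ containing it are facets of the new polytopes. Your extra step of explicitly checking that the two old facets are deleted by the gluing operation is a slightly more careful spelling-out of what the paper leaves implicit, but it is the same argument.
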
 \begin{proof}
It was shown in the proof of Lemma \ref{FlatIntersection} that a ridge
$e$ in $X'$ can only be concave if it existed in $X$ as a convex ridge
in the intersection of two concave stars. Each such convex ridge has a
new polytope glued onto either side of it to create $X'$, and thus both
facets in $X'$ that contain it come from new polytopes. \end{proof}

\begin{lemma}\label{StarLemma} Let $X$ be as before, with concave regions $E_1,...,E_n$. Then if $X'$ is the complex obtained by gluing on a
right-angled polytope to each $E_i$, these three conditions hold for
$X'$ and its new concave regions $E'_1,...,E'_m$:
 \begin{enumerate}
 \item each $E'_i$ is a concave $k$-star,
\item if any two facets $A$ and $B$ of distinct $k$-stars
intersect, the ridge $A\cap B$ contains the intersection of the center
cells of the two concave stars, and
\item the intersection of any two center
cells $K_1,K_2$ is a single cell $K_1 \cap K_2$ of codimension 1 in each center cell.
\end{enumerate}
\end{lemma}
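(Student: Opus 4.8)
The plan is to pin down the new concave regions by locating their center cells explicitly, and then to read the three conditions off Lemma~\ref{PolytopeLemma}. Write $Y'$ for the ball obtained from $Y$ by gluing on the polytopes, so $X'=\partial Y'$, and let $K_i$ be the codimension-$k$ center cell of $E_i$. The starting point is the characterization obtained inside the proof of Lemma~\ref{FlatIntersection}: a ridge of $X'$ is concave if and only if it was a convex ridge of $X$ lying in an intersection $E_i\cap E_j$ of two old concave stars, and by Corollary~\ref{EasyCor} every facet of $X'$ carrying a concave ridge is a facet of one of the glued $P_i$. Thus all new concave structure appears exactly where two old stars met. By standing assumption~(3), whenever $E_i$ and $E_j$ meet, $K_i\cap K_j$ is a single cell of codimension $1$ in each $K_i$, hence of codimension $k+1$ in $Y$; and since $K_i\cap K_j$ is a proper face of each $K_i$ it lies in $\partial E_i\cap\partial E_j$, on the boundary of each gluing region, so it is not destroyed when the interiors of $E_i,E_j$ are deleted and survives as a cell $K'$ of $X'$. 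My claim is that these surviving cells $K'=K_i\cap K_j$ are precisely the center cells of the new concave regions.

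For condition~(1), I would fix such a $K'$ and show that the concave region through a former convex ridge $e\subseteq E_i\cap E_j$ with $K'\subseteq e$ is exactly the star of $K'$. Applying Lemma~\ref{PolytopeLemma} locally, there are exactly $k+1$ facets of $Y'$ containing the codimension-$(k+1)$ cell $K'$; each comes from a glued polytope, contains $K'$, and so contains a former convex ridge of $E_i\cap E_j$, i.e.\ a concave ridge. Hence all $k+1$ of them belong to a single concave region, which is therefore a concave $(k+1)$-star. The two verifications are that this region is no smaller than the star (every facet through $K'$ really does acquire a concave ridge, and these ridges connect the $k+1$ facets) and no larger (a concave ridge cannot link the star of $K'$ to the star of a different center), the latter following because a concave ridge contains its center cell and the absence of prismatic $3$-circuits forbids the merging configuration. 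Since every old center has codimension $k$, every new center has codimension $k+1$, so all new concave regions are concave stars of one fixed order, establishing standing assumption~(2) for $X'$.

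For conditions~(2) and~(3) I would translate statements about new centers back to old centers. A new center has the form $K'=K_a\cap K_b$, so the intersection $K'_1\cap K'_2$ of two new centers is an intersection of old center cells; as these old cells mutually intersect and no prohibited circuits occur, Lemma~\ref{PolytopeLemma} makes it a single cell, while standing assumption~(3) applied to the relevant pairs controls its codimension and yields a single cell of codimension $1$ in each of $K'_1,K'_2$ (condition~(3)). For condition~(2), if facets $A\in E'_i$ and $B\in E'_j$ of distinct new stars meet, then each contains the relevant old-star facets and hence the corresponding center cells; applying the converse half of Lemma~\ref{PolytopeLemma} to the facets meeting along $A\cap B$ shows the single ridge $A\cap B$ contains $K'_i\cap K'_j$.

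The step I expect to be hardest is condition~(1), and in particular the bookkeeping that $K'=K_i\cap K_j$ truly persists in $X'$ with exactly the correct star around it: one must follow which cells of $X$ are deleted or identified under the simultaneous gluing of all the $P_i$ and the ensuing collapse, and then confirm both that every facet through $K'$ becomes concave and that no single concave ridge joins two distinct centers. A brief separate remark will also be needed for the degenerate case $k+1=d$, where the new centers are vertices that pairwise miss one another and conditions~(2)--(3) hold vacuously.
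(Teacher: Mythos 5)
Your overall strategy matches the paper's: both identify the new center cells as the pairwise intersections $K_i\cap K_j$ of old center cells, and both lean on Lemma \ref{PolytopeLemma}, Lemma \ref{FlatIntersection}, Corollary \ref{EasyCor}, and the standing assumptions. But two steps you rely on do not go through as stated. First, the count ``there are exactly $k+1$ facets of $Y'$ containing the codimension-$(k+1)$ cell $K'$'' is not an application of Lemma \ref{PolytopeLemma}: that lemma counts facets of a \emph{single} polytope through one of its cells, whereas the facets of $X'=\partial Y'$ through $K'$ are the unglued facets of the several polytopes of $Y'$ sharing $K'$, and their number depends on which fundamental domains around $K'$ are already present --- a global, inductive fact, not a local polytope fact. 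The paper avoids this count entirely: it shows instead that every facet $A_i$ of $X'$ with a concave ridge satisfies $A_i\cap K_i=K_1\cap K_2$ (via the inclusion $K_1\cap K_2\subseteq A_1\cap K_1$ together with the codimension-$1$ statement in standing assumption (3), which forces equality), so that all concave ridges of all such facets contain the single cell $K'$ and any two such facets share a concave ridge; this is what makes the region a $(k+1)$-star. Relatedly, your claim that each facet through $K'$ ``contains a former convex ridge of $E_i\cap E_j$'' is essentially the statement $A_i\cap K_i=K'$ in disguise, so it cannot be used as an input to the facet count.

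Second, for conditions (2) and (3) the real content is \emph{non-emptiness}: once $K'_i\subseteq A$ and $K'_j\subseteq B$ are known, the containment $K'_i\cap K'_j\subseteq A\cap B$ is automatic, and the whole issue is to show that $A\cap B$ actually meets the center cell $K$ of the common gluing region (equivalently, that $K'_i\cap K'_j\neq\emptyset$), after which the codimension count ($k+2$ mutually intersecting facets) is immediate from Lemma \ref{PolytopeLemma}. The paper gets non-emptiness from the no-prismatic-$3$-circuit hypothesis: if $A\cap K$ and $B\cap K$ were disjoint, there would be a facet $C$ of the gluing region missing the ridge $A\cap B$, and $A,B,C$ would form a prismatic $3$-circuit. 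Your sketch invokes ``the converse half of Lemma \ref{PolytopeLemma}'' and standing assumption (3) but never supplies this step, and without it conditions (2)--(3) are not established. Finally, the case you defer as ``bookkeeping'' --- a facet of a glued polytope whose ridges with the gluing region were flat rather than convex --- is handled in the paper by coherence (all such ridges are simultaneously flat, so the facet collapses with a single partner and never joins a new concave region); this case has to be disposed of before any count of facets through $K'$ can be set up, since collapsed facets are deleted from $X'$.
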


\begin{proof} Let
$E_1,E_2$ be distinct concave stars in $X$ that intersect in a ridge
$e$. Let $P_1,P_2$ be the right-angled polytopes glued onto $E_1$ and
$E_2$, respectively. Let $A_1$ and $A_2$ be the facets in $X'$ that
contain $e$ (before collapse, if $e$ was flat in $X$), where $A_1$ comes from $P_1$ and $A_2$ comes from $P_2$. Since the $A_i$
intersect in the ridge $e$, by hypothesis each intersects the center
cell of the gluing region of its respective polytope, meaning it
intersects all $k$ facets of the gluing region.

There are two possibilities: either one of the ridges of the concave
region that $A_1$ is attached to was flat, or they were all convex. If
one was flat, then by coherence, all ridges of the concave region that
intersect $A_1$ are flat (see Figure \ref{ConcaveCoherence}). Since all the ridges are
flat, $A_1$ will collapse over multiple ridges; but $A_1$ can only be
identified with a single other facet, so it must have all of its flat
ridges in common with a single other facet $A_2$, and they must collapse
with each other (as in Figure \ref{Stage22Prime}).

 \begin{figure}
\scalebox{.8}{\includegraphics{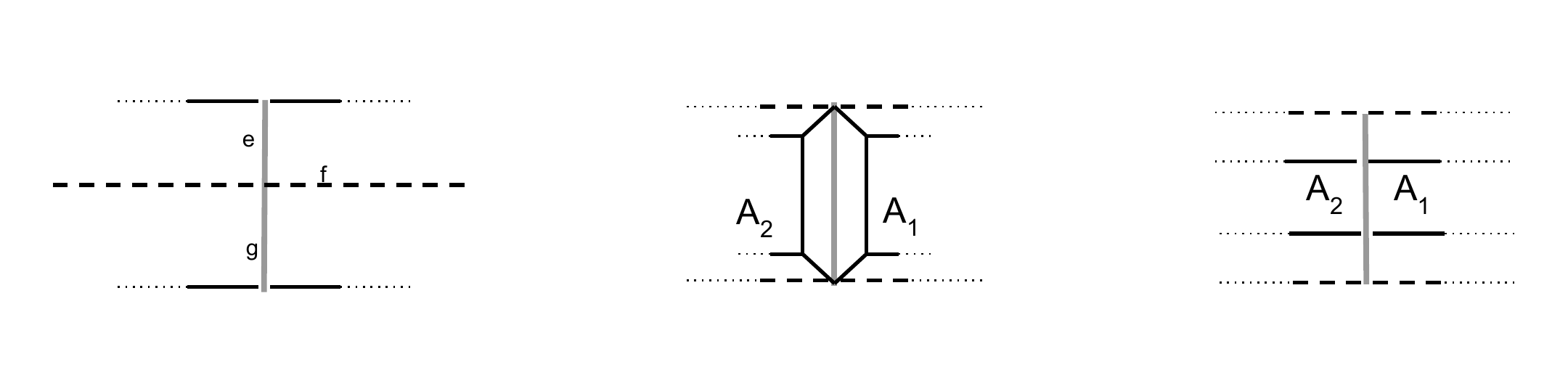}} \caption{If one ridge of a concave region
is flat (here, the ridge $e$), then its reflection across all concave
ridges is flat by coherence (here, $g$ is the reflection of $e$ across
$f$. This implies that any facet (here, the facet $A_1$) of the polytope
glued onto the concave region that has a flat ridge has no concave
ridges. Any such facet will collapse over all of its flat ridges with an
identical facet (here, $A_2)$).}\label{ConcaveCoherence}\end{figure}

Thus, we can assume that all $k$ ridges of $E_1$ that $A_1$ intersects
were convex in $X$ and are concave in $X'$. Then all these ridges have a
common cell, which is $A_1\cap K_1$, where $K_1$ is the center cell of
$E_1$. Now, let $A_2$ be one of the neighboring facets with a concave
ridge; by symmetry with $A_1$, it has $k$ concave ridges, all of which contain $A_2
\cap K_2$. Now, due to Lemma \ref{PolytopeLemma},
$A_1\cap K_1$ is a single cell of codimension 1 in $K_1$, and $A_2\cap
K_2$ is a single cell of codimension 1 in $K_2$. But by hypothesis,
$K_1\cap K_2$ is a single cell of codimension 1 in both $K_1$ and $K_2$,
and $K_1\cap K_2 \subseteq A_1 \cap A_2\subseteq A_1$. Thus, $K_1\cap
K_2 \subseteq A_1\cap K_1$, and we must have equality. By symmetry, we
also have that $K_1\cap K_2 = A_2 \cap K_2$, and, in general, $A_i\cap K_i=K_1\cap K_2$ for all facets $A_i$ intersecting $A_1$ in a concave ridge..

Thus, all facets in the concave region containing $A_1$ have exactly $k$
concave ridges (implying that there are at least $k+1$ facets in the
concave region), and there is a single cell $K'=K_1\cap K_2$ that all
concave ridges of all facets in the concave region contain (which implies
that any two concave facets share a concave ridge with each other). This
implies that the concave region is a concave $(k+1)$-star.

We need to show that condition 2 holds in $X'$. If $A$ and $B$ are facets
in $X'$ that have concave ridges but share only a convex ridge, then
they each came from a new polytope (by Corollary \ref{EasyCor}) and it
must be the same polytope (because they share a convex ridge). But then
because they have a concave ridge, each intersected the center cell $K$
of the gluing region. If $A\cap K$ did not intersect $B\cap K$, then
there would be a facet $C$ of the gluing region which does not intersect
the ridge $A\cap B$; then $A,B$ and $C$ would together form a prismatic
3-circuit. This is a contradiction. Thus, $A\cap K$ intersects $B\cap
K$, and thus, the center cells of their respective concave stars
intersect. Since $A\cap B\cap K$ is non-empty, and since $K$ is the
intersection of $k$ facets, $A\cap B \cap K$ is the non-empty
intersection of $k+2$ facets, and thus is a single cell of dimension one
lower than $A\cap K$ or $B \cap K$.

This completes the proof. \end{proof}

\section{A subdivision rule for right-angled manifolds}\label{MainSection}

The subdivision rules we obtain in this section will depend on the combinatorics of the original polytopes comprising a fundamental domain. In particular, they depend on a new complex derived from the polytopes called the \textbf{inflation}. For those familiar with such constructions, the inflation is almost identical to the handle decomposition (see \cite{Kosinksi}).

\begin{defi}
Let $P$ be a right-angled polytope with boundary $X=\partial P$. Let $I'(X)$ be a new complex such that:
\begin{itemize}
\item For every \emph{cell} $K$ of $X$, there is a \emph{facet} $I'(K)$ in $I'(X)$,
\item If $K$ has codimension $m$ in $X$, we have $I'(K)\cong K \times \Delta^m$,
\item If $K\subseteq K'$ are two cells of $X$ whose dimensions differ by 1, we attach $I'(K)$ to $I'(K')$ by a ridge $K\times \Delta^{m-1} \subseteq K\times \partial \Delta^m \subseteq \partial I'(K)$ and the ridge $K\times \Delta^{m-1} \subseteq \partial K'\times \Delta^{m-1} \subseteq \partial'(K')$, where the map is $(id, inclusion)$ and the choice of the sub simplex $\Delta^{m-1}\subseteq\partial \Delta^m$ is immaterial due to symmetry.
\end{itemize}

Finally, we let $I(X)$ be the quotient of $I'(X)$ obtained by quotienting the simplex factor of $I(K)$ to a point for all ideal cells $K$ (i.e. if $I'(K)=K\times \Delta^m$, we replace it with $I(K)=K\times \{pt.\}\cong K$). Note that this will cause some facets of the adjoining non-ideal cells to collapse. We call $I(X)$ the \textbf{inflation} of $X$.

\end{defi}

We think of the inflation process as expanding each non-ideal cell into a facet while the ideal cells remain fixed.

\textbf{Examples:} In the 3-torus example (Section \ref{CubeSection}), our fundamental domain was a cube. Its inflation is the complex of Figure \ref{TorusS0}.

In the product case, our polytope was an octahedral prism with 4 ideal square facets and 4 non-ideal facets, as well as 2 non-ideal octagons. Its inflation was given in Figure \ref{ProductS0}.

We need only one more definition:

\begin{defi}
Let $K$ be a cell in the complex $X$. Let $S(K)$ be the union of all closed cells intersecting $K$. The \textbf{inflated star} $IS(X)$ is defined as $I(S(K))$, the inflation of $S(K)$.
\end{defi}

\begin{thm}\label{BigTheorem} Let $M$ be a right-angled manifold with a fundamental domain consisting of polytopes $P_1,...,P_n$. Then $M$ has a
finite subdivision rule. The tile types are in 1-1
correspondence with the facets of the inflations $I(\partial P_1)$,...,$I(\partial P_n)$. Each tile corresponding to a facet $I(K)\subseteq I(\partial P_i)$ is subdivided into a complex isomorphic to $\overline{I(\partial P_i)\setminus IS(K)}$, i.e. the complement of the inflated star.\end{thm}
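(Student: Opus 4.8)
The plan is to realize the subdivision rule through the sequence of boundary spheres $S(i,j)$ built from the universal cover in Section \ref{MethodSection}: set $X = S(1,0)$ and let $R^n(X)$ be the flat structure of $S(n+1,0)$. Three things must then be established, in order: that every $S(k,0)$ is a sphere, that the flat structure of $S(n+2,0)$ is a subdivision of that of $S(n+1,0)$, and that the tile types and their subdivisions are exactly the facets of the inflations and their inflated-star complements. The first two are topological and follow quickly from the operation lemmas; the third is the combinatorial core and is where the real work lies.

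First I would set up the induction that carries the standing assumptions of Section \ref{GluingCollapseSection} from $S(k,0)$ to $S(k+1,0)$. The base case is the single fundamental domain $S(0,0)$, all of whose ridges are convex, so each facet is a concave $1$-star by the degenerate convention and the (empty) flat set is coherent by Lemma \ref{CoherenceInduction}. One full round from $S(k,0)$ to $S(k+1,0)$ consists of $d$ applications of gluing-and-collapse, handling concave $j$-stars at the $j$-th sub-step as in the $B(i,j)$ progression. At each sub-step Lemma \ref{StarLemma} shows the new concave regions are again concave $k$-stars meeting standing assumptions $2$ and $3$, Lemma \ref{FlatIntersection} shows their flat ridges occur only in pairwise intersections of concave regions (assumption $4$), and, since $S(k+1,0)$ again has no concave ridges, Lemma \ref{CoherenceInduction} shows the resulting flat set is coherent (assumption $5$). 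Thus the hypotheses of all the operation lemmas remain valid at every stage.

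With the standing assumptions in hand, topology and the subdivision relation are immediate. Lemma \ref{GluingLemma} keeps each $\partial B(i,j)$ a sphere, because the gluing regions are concave stars and hence balls, and Lemma \ref{CoLemma} shows each collapse preserves topology; hence every $S(k,0)$ is a sphere, so all $R^n(X)$ are homeomorphic. The same Lemma \ref{CoLemma} provides the natural wall-extension map from the flat ridges of $S(k,0)$ to those of $S(k+1,0)$, under which the flat structure of $S(k+1,0)$ is cellularly isomorphic to a subdivision of that of $S(k,0)$. This is exactly the requirement that $R^{n+1}(X)$ be a subdivision of $R^n(X)$, so a subdivision rule can be extracted.

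The crux, and the step I expect to be the main obstacle, is matching the combinatorics to the inflation. The key claim is that the flat structure of each $S(k,0)$ has facets each cellularly isomorphic to a facet $I(K)$ of some $I(\partial P_i)$, with the isomorphism tracking the cell $K$ of $P_i$ that the wall arrangement records; for $X = S(1,0)$ this says the flat structure is the inflation itself, which one checks directly as in the torus and product examples. I would then follow a single facet $I(K)$ through one full round: gluing the fundamental domain onto the concave stars and collapsing covers up a neighborhood of $K$ that is combinatorially the inflated star $IS(K) = I(S(K))$, so the surviving portion of the facet in the new flat structure is precisely $\overline{I(\partial P_i)\setminus IS(K)}$. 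The delicate part is the bookkeeping that makes this complement appear: tracking which ridges are convex, flat, or covered up at each of the $d$ sub-steps, showing this records the face poset of $P_i$ around $K$, and confirming that the quotient in the definition of $I$ correctly absorbs the ideal cells (which never subdivide). Once this cellular isomorphism is proven, and shown to be stable under iteration so that the pieces are again of the form $I(K')$, finiteness is automatic, since the finitely many polytopes have finitely many cells and hence finitely many facets $I(K)$ to serve as tile types.
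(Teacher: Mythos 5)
Your proposal follows essentially the same route as the paper's proof: the same induction carrying the standing assumptions through the stages $S(i,j)$ via Lemmas \ref{GluingLemma}, \ref{CoLemma}, \ref{StarLemma}, \ref{FlatIntersection}, and \ref{CoherenceInduction}, the same choice of $R^n(X)$ as the flat structure of $S(n+1,0)$, and the same identification of tiles with facets of the inflation and of each subdivision with the complement of the inflated star. The combinatorial bookkeeping you flag as the remaining obstacle (tracking which tiles are adjacent via collapsing stars versus sequential gluing, and matching this to the face poset of $P_i$) is exactly where the paper concentrates its effort, so your outline is a faithful skeleton of the actual argument.
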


\begin{proof} We prove this by induction using the various lemmas. We
claim by induction that for each $S(i,j)$:
\begin{enumerate}
\item the
topology of $S(i,j)$ is that of a sphere,
\item the concave regions of
$S(i,j)$ are all concave $(j+1)$-stars,
\item the intersection of any
two concave stars contains the intersection of their respective center
cells (which has codimension 1 in each cell),
\item the flat ridges of
facets in concave regions of $S(i,j)$ are only found in the intersection
with other concave regions,
\item the flat ridges of $S(i,j)$ found in
the intersection of concave regions are part of a coherent set for
$j\neq 0$, and
\item the flat ridges of $S(i,0)$ are coherent.
\item a
subdivision of the flat structure of $S(i,j)$ embeds into the flat structure
of $S(i,j+1)$. \end{enumerate}

These claims are all true trivially for $S(0,0)$, which has no concave
or flat ridges.

Assume the claims are true for $S(i,j)$. Then we then create $S(i,j+1)$
by attaching polytopes onto each concave region. By Lemma
\ref{GluingLemma}, this does not change the topology of $S(i,j+1)$. We
then collapse all facets that adjoin formerly flat ridges. By claim 5, the collapsing ridges are coherent. Thus, by Lemma
\ref{CoLemma}, this does not change the topology of $S(i,j+1)$. This
proves claim 1.

Claims 2 and 3 follow from Lemma \ref{StarLemma}.

Claim 4 follows from Lemma \ref{FlatIntersection}.

Claim 5 follows from Lemma \ref{CoLemma}, since all ridges in the
intersection of concave regions come from new polytopes on both sides (by Corollary
\ref{EasyCor}) and thus can only be flat from collapse.

Claim 6 follows from Lemma \ref{CoherenceInduction}.

Claim 7 follows from Lemma \ref{CoLemma}.

This concludes the proof of the claims. The most important of the claims
are claims 1 and 7; the rest are merely used in the induction. By virtue
of these claims, we can identify each stage $S(i,0)$ as a single sphere
with a sequence of cell structures that are subdivisions of each other
coming from the flat ridges $S(i,0)$. The first stage that has flat
ridges is $i=1$, so we let $R^n(X)$ be the sphere with the cell
structure coming from the flat ridges of $S(n+1,0)$.

We now describe the tile types of the subdivision rule. We claim that
there is one tile type for each facet of the inflations $I(\partial P_1),....I(\partial P_n)$. Heuristically, this is because the non-ideal cells of dimension
$k$ are in one to one correspondence with concave $(d-k)$-stars (each cell corresponding to its star), which are the essential building blocks of the tilings.
We now prove this claim. We first show that the flat ridges of
$S(i+1,0)$ partition the sphere into tiles that are in one-to-one
correspondence with the non-ideal \emph{cells} and ideal \emph{facets} of
$S(i,0)$.

Every non-ideal point in $S(i+1,0)$ is contained in the boundary of some
polytope $P$ of $B(i+1,0)\setminus B(i,0)$ (i.e. in the outermost layer), while the ideal points are
exactly the union of all ideal facets of all polytopes in $B(i+1,0)$  (as in Figure \ref{FlatProductS0}).

Let $P_k$ indicate the intersection of a polytope $P$ with the sphere $S(k,0)$. The collection $\{P_{i+1}\}$ for fixed $i$ and varying $P$ tiles $S(i+1,0)$, in the sense that the interiors of the $P_{i+1}$ are
disjoint, and the union of the $P_i$ covers $S(i+1,0)$. This tiling is in fact the tiling of $S(i+1,0)$ given by flat ridges. This follows since the intersection in $S(i+1,0)$ of any two $P_i$ is flat, because the ridges touch at
least 2 polytopes and there are no concave ridges in $S(i+1,0)$.
Conversely, each flat ridge of $S(i+1,0)$ is contained in an
intersection of two polytopes.

Thus, the flat ridges of $S(i+1,0)$ divide the points of $S(i+1,0)$ into
distinct tiles, one for each component of each $P_{i+1}$ (there may be more
than one component if $P_i$ consists entirely of ideal facets, as occurs
for some polytopes in Figure \ref{FlatProductS0}). We can consider two classes of the $\{P_{i+1}\}$: those where $P$ is in $B(i,0)$ and those where $P$ is in $B(i+1,0)\setminus B(i,0)$.

If a polytope in $B(i,0)$ intersects $S(i+1,0)$, it must intersect in
ideal facets only (because all non-ideal facets are covered up by the time
we reach $S(i+1,0)$. Every ideal facet of the polytopes composing $B(i,0)$
intersects $S(i+1,0)$ with all of its ridges being flat, so each ideal
facet of each polytope in $B(i,0)$ is its own tile in the cell structure
on the sphere given by the flat ridges of $S(i+1,0)$.

A \textbf{convex cell} of $S(i,0)$ is a cell which is not contained in \emph{any} flat ridge; or, equivalently, it is a cell in $B(i,0)$ which is contained in only one fundamental domain. By the construction of the $B(i,j)$, the polytopes in
$B(i+1,0)\setminus B(i,0)$ are in one-to-one correspondence with convex
cells of $S(i,0)$, since each convex cell of dimension $k$ becomes the
center cell of a concave $(d-k)$-star that is covered up in creating
stage $S(i,d-k)$. Thus, in the cell structure
on the sphere given by the flat ridges of $S(i+1,0)$, there is exactly one tile for each convex
cell of $S(i,0)$. Two tiles are adjacent exactly when their
corresponding polytopes share a flat ridge in $S(i+1,0)$; but every flat
ridge in $S(i+1,0)$ comes from either:
\begin{enumerate}
\item  facets in two $j$-stars collapsing for some $j$, or
\item a polytope being glued onto only one side of a convex ridge.
\end{enumerate}

Let's consider these two cases. Case 1 occurs when domains are glued simultaneously next to a flat ridge in some stage $S(i,j)$. Case 2 occurs when domains are glued sequentially, with a domain glued on in stage $S(i,j+1)$ next to a convex ridge of a domain glued on in stage $S(i,j)$.

Case 1 occurs when the center cell of one $j$-star (which lives as a convex cell of $S(i,0)$) intersects the center cell of another $j$-star, and their intersection intersects a flat ridge. For example, if two facets of $S(i,0)$ share a flat ridge, then the polytopes glued onto them will share a flat ridge (as in Figures \ref{Stage21Prime} and \ref{Stage21}). Similarly, if two convex ridges of $S(i,0)$ intersect each other and a flat ridge, the polytopes glued onto the concave pairs they belong to will share a flat ridge (as in Figures \ref{Stage22Prime} and \ref{StageS22}, where the convex ridges are the edges on the corners of the cube).

Case 2 occurs when a polytope is glued onto a $j$-star which shares a convex ridge with a facet $B$ without concave ridges. But concave $j$-stars only share convex ridges with facets of polytopes that were glued onto $j-1$-stars. Also, the center cell of any such $j$-star will be contained in the center cell of each of the $j-1$-stars that the polytopes comprising it were glued onto (as described in the proof of Lemma \ref{StarLemma}). Thus, two tiles in the cell structure given by the flat ridges of $S(i+1,0)$ can only share this kind of flat ridge if they correspond to convex cells $K_1,K_2$ where the dimension of $K_1$ is one less than that of $K_2$ and $K_1\subseteq K_2$.

Thus, the tiles in the flat structure corresponding to two
cells of $S(i,0)$ are adjacent when their two cells are the same
dimension and intersect a flat ridge together or when their dimensions
differ by 1 and one is included in the other.

Thus, since we define the $n$-th stage of subdivision to be the cell
structure on the $(d-1)$-sphere given by flat ridges of $S(n+1,0)$, we
can describe it explicitly by taking the cell structure of $S(n,0)$ and
`inflating' each convex cell into its own facet, with ridges
corresponding to containment of cells or to two cells intersecting a
flat ridge. Thus, the flat cell structure is just the inflation of $S(n,0)$.

Only cells lying on the boundary $S(n,0)$ get inflated.

In the examples of Sections \ref{CubeSection} and \ref{ProductSection},
this same inflating process occurred. In the torus case, each
ridge (in this case, each edge) was inflated by taking the product with a 1-simplex and each vertex with a 2-simplex. In
the truncated ideal case, each edge was inflated to a bigon and not
any further, because all vertices were ideal.

We now describe the subdivision of each tile type. Ideal tiles are never
subdivided. Every polytope $P$ that intersects $S(i+1,0)$ in non-ideal
points comes from some stage $S(i,j)$. When gluing a polytope on to form
$S(i,j)$, we first end up with the complement in $P$ of the gluing
region, and then some boundary facets collapse and thus disappear, some get a concave ridge and are subsequently covered up,
and some get a flat ridge and are not covered up. The set of facets that collapse or get a concave
ridge is exactly the set of facets that intersects the underlying
$j$-tuple in $j$ ridges, as described in the proof of Lemma
\ref{StarLemma}. All collapsing facets disappear immediately and all
facets of $S(i,j)$ with concave ridges disappear in $S(i,j+1)$ as we glue
on polytopes to $j$-tuples. Thus, no matter what stage $j$ we glue $P$
onto, the set $P_i$ consists of the complement of a $j$-tuple minus all
facets that intersect the center cell of the $j$-tuple (so, for instance,
polytopes glued onto a 1-tuple delete all facets intersecting that one
facet, as well as the facet itself), followed by inflating every non-ideal
cell as described earlier. Thus, it is simply the inflated star of the cell.

\end{proof}

\begin{cor} There exist group-invariant finite subdivision rules for:
\begin{enumerate}
\item All non-split, prime alternating link
complements,
\item All fully augmented link complements, and
\item All closed 3-manifolds built from right-angled polyhedra.
\end{enumerate}

\end{cor}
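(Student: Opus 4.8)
The plan is to derive all three statements from Theorem \ref{BigTheorem} by exhibiting, in each case, a decomposition of the manifold into right-angled polytopes. The key observation is that the hypotheses of Theorem \ref{BigTheorem} are purely combinatorial: one needs an abstract polytopal fundamental domain whose facets are identified in pairs and whose ridges are surrounded by four polytopes in the universal cover, subject only to the regularity conditions (no $1$- or $2$-circuits and no prismatic $3$-circuits). No metric right angles are required. For the cusped families (1) and (2) I would first remove open horoball neighborhoods of the cusps, producing a compact manifold with torus boundary; the horospherical cross-sections of the ideal vertices then become the ideal facets of the construction, so each complement is realized as a right-angled manifold with corners. Group-invariance is then automatic, since the tilings $S(i,0)$ are the boundary spheres of the metric balls $B(i,0)$ in the universal cover, on which the deck group $\pi_1$ acts.

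Case (3) is immediate: a closed $3$-manifold assembled from right-angled polyhedra is by definition a right-angled manifold, with fundamental domain the given collection $P_1,\dots,P_n$. Each such polyhedron is simple, so its vertex links are triangles and it is an abstract polytope, and the existence of the hyperbolic all-right-angles structure forces, by Pogorelov's realizability criterion, the absence of prismatic $3$- and $4$-circuits; the $1$- and $2$-circuit conditions are likewise automatic. Thus Theorem \ref{BigTheorem} applies verbatim. Case (2) needs one genuinely geometric input: a fully augmented link complement cuts along its two totally geodesic checkerboard surfaces into two ideal right-angled polyhedra. After truncating the ideal vertices to ideal facets, the hypotheses of Theorem \ref{BigTheorem} are met and the theorem produces the subdivision rule, with the ideal facets recording the cusps.

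Case (1) is where the essential work lies, and I expect it to be the main obstacle. A prime alternating link complement need not be metrically built from right-angled polyhedra---the figure-eight knot, whose volume is too small to be assembled from ideal right-angled octahedra, already shows this---so the argument must be combinatorial rather than geometric. The tool is Menasco's decomposition of the complement of a connected, prime, reduced alternating diagram into two ideal polyhedra whose $1$-skeleta are the diagram graph. I would verify that this decomposition is combinatorially right-angled, in the sense that exactly four polyhedral wedges meet along each edge (one for each of the four regions surrounding the corresponding crossing), so that the ridge-in-fours condition of Theorem \ref{BigTheorem} holds even though the individual dihedral angles are not metrically $\pi/2$. The crux of the case is that the regularity hypotheses translate exactly into the hypotheses on the link: reducedness (no nugatory crossings) and primeness of the diagram are precisely what forbid the short and prismatic circuits in the resulting polytopes, with all vertices ideal and hence truncated to ideal facets as before, after which Theorem \ref{BigTheorem} applies.

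The delicate thread running through all three cases is the reconciliation of the combinatorial circuit conditions with the source of the decomposition: in the geometric cases (2) and (3) these conditions come for free from hyperbolic realizability, whereas in case (1) they must be extracted directly from the primeness and reducedness of the alternating diagram, and one must separate the combinatorial four-fold edge pattern from the (generally non-right) metric angles. Once the decomposition and its regularity are in hand, each conclusion is a direct invocation of Theorem \ref{BigTheorem}.
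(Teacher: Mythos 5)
Your proposal is correct and follows essentially the same route as the paper: exhibit a decomposition into (combinatorially) right-angled polytopes in each case --- Menasco's two-ideal-polyhedra decomposition for prime alternating links, the Agol--Thurston decomposition for fully augmented links, and the defining decomposition in the closed case --- and then invoke Theorem \ref{BigTheorem}. The paper's own proof consists of exactly these citations with no further verification, so your additional checks (the four-fold edge identification in Menasco's decomposition, and that primeness and reducedness rule out the forbidden circuits) supply detail the paper leaves implicit rather than a different argument.
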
 \begin{proof} All of these manifolds admit a decomposition
into right-angled polytopes. This is due to Menasco \cite{Menasco} for the alternating link complements and Agol and Thurston \cite{Lackenby} for augmented link complements. The third case is trivial. \end{proof}

This corollary improves on the results in \cite{PolySubs}, where we found subdivision rules for all closed 3-manifolds built from \emph{hyperbolic} right-angled polyhedra, which satisfy the additional condition that there are no prismatic 4-circuits.

\section{Subdivision rules for RAAGs}\label{RAAGSection}

Let $G$ be a RAAG with generators $e_1^{\pm 1},...,e_d^{\pm 1}$. In
\cite{CharneyDav}, the authors described a space called the \textbf{Salvetti complex} whose fundamental group is $G$. The Salvetti complex is obtained by taking a bouquet of circles (with one loop per generator), attaching one square for each commutator (by a map of the form $aba^{-1}b^{-1}$), and then attaching an $n$-cube for each $n$-clique in the defining graph of the right-angled Artin group.

This space is a subset of the $d$-dimensional torus
$\mathbb{T}^d$. We construct a space homotopy equivalent to this one
which is more suitable for our purposes; it will also be a subset of
$\mathbb{T}^d$; in fact, it will be a regular neighborhood of the Salvetti complex in $\mathbb{T}^d$.  

\begin{figure}\caption{On the left is the
Salvetti complex for $G=\mathbb{Z}*\mathbb{Z}^2$ (before taking the
quotient); on the right is our space $T_G$, which is homotopy
equivalent. The dotted lines represent deleted edges.}\label{Expansion}
\scalebox{.25}{\includegraphics{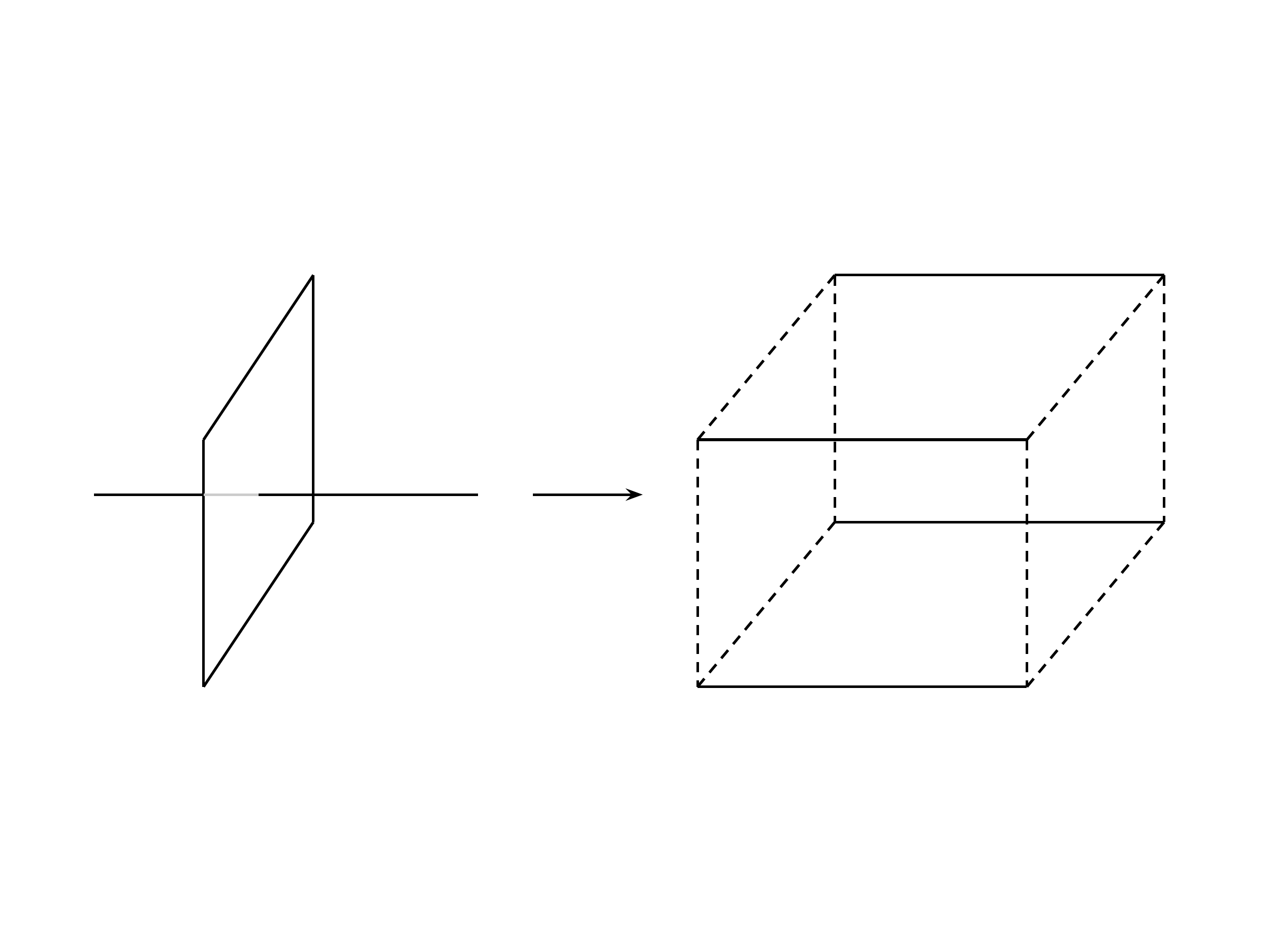}} \end{figure}

Let $I=[-1,1]$, and let $I^d$ be the cube of dimension $d$. Let $H_{ij}$
be the union of the four ridges of this cube defined by the equations
$x_i=\pm 1, x_j=\pm 1$. Finally, let $I'_G=I^d \setminus
\mathop{\bigcup}\limits_{[e_i,e_j]\neq 1} H_{ij}$. Thus, $I'_G$ is obtained
from $I^d$ by deleting all ridges corresponding to non-commuting pairs
of generators. We now let $T'_G=p(I'_G)\subseteq p(I^d)=\mathbb{T}^d$,
where $p$ is the quotient map from the cube to the torus. One can think
of $T'_G$ as the subspace of the torus obtained by snipping out the
portions of the manifold corresponding to some of the commutator
relations (those which are not found in our given RAAG).

\begin{thm}\label{DomainTheorem} The fundamental group of $T'_G$ is $G$.
\end{thm}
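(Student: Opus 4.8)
The plan is to recognize $T'_G$ as the complement of a union of coordinate subtori in $\mathbb{T}^d$ and then run a duality argument identifying its homotopy type with that of the Salvetti complex $\Sigma_G$, whose fundamental group is $G$ by construction (Charney--Davis). First I would pass from the cube to the torus: the face $H_{ij}$ is saturated under the identification $p$ (flipping a coordinate already equal to $\pm 1$ keeps the point in $H_{ij}$), so $p^{-1}(p(H_{ij}))=H_{ij}$ and hence $p(I^d\setminus H_{ij})=\mathbb{T}^d\setminus p(H_{ij})$. Writing $T_{ij}:=p(H_{ij})$, the codimension-$2$ subtorus on which the $i$th and $j$th coordinates sit at the basepoint, this gives
\[
T'_G=\mathbb{T}^d\setminus \bigcup_{[e_i,e_j]\neq 1} T_{ij}.
\]

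Next I would set up the bookkeeping. Give $\mathbb{T}^d=(S^1)^d$ its product CW structure, with one open cell $c_S=\prod_{k\in S}e_k\times\prod_{k\notin S}\{\mathrm{bp}\}$ of dimension $|S|$ for each $S\subseteq\{1,\dots,d\}$. Let $\Gamma$ denote the defining graph (vertices the generators, an edge for each commuting pair). Then $c_S\subseteq T_{ij}$ iff $i,j\notin S$, so the removed set $N:=\bigcup_{[e_i,e_j]\neq 1}T_{ij}$ is the union of those $c_S$ whose complement $S^{c}$ contains a non-commuting pair, i.e.\ whose complement is \emph{not} a clique of $\Gamma$. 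Since a subset of a clique is a clique, the surviving cells (those with $S^{c}$ a clique) are closed under passing to cofaces, which is exactly the statement that $N$ is a subcomplex of $\mathbb{T}^d$.

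The heart of the argument is a regular-neighborhood/PL-duality step. Because $\mathbb{T}^d$ is a closed PL manifold cubulated as above and $N$ is a subcomplex, the open complement $T'_G=\mathbb{T}^d\setminus N$ deformation retracts onto the closed complement of a regular neighborhood of $N$, which is the union $\Sigma^{*}:=\bigcup_{c_S\not\subseteq N}D(c_S)$ of the \emph{dual} cells of the surviving cells, a subcomplex of the dual cubulation. Here $\dim D(c_S)=d-|S|=|S^{c}|$, so reindexing a surviving cell by the clique $T=S^{c}$ exhibits $\Sigma^{*}$ as a complex with exactly one cell of dimension $|T|$ for each clique $T$ of $\Gamma$; tracing the dual cube structure shows the dual $1$-cells represent the generators $e_k$ and the $2$-cell indexed by a commuting pair $\{i,j\}$ is attached along $e_ie_je_i^{-1}e_j^{-1}$. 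This is precisely the cell structure and attaching maps of $\Sigma_G$, so $T'_G\simeq\Sigma_G$ and therefore $\pi_1(T'_G)\cong\pi_1(\Sigma_G)\cong G$.

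The step I expect to be the main obstacle is making the duality rigorous: verifying both that the open complement of the subcomplex $N$ is homotopy equivalent to the closed complement of a regular neighborhood of $N$ (by pushing in along the collar of $\partial\nu(N)$), and that the resulting dual subcomplex is genuinely isomorphic to $\Sigma_G$ as a CW complex, attaching maps included. If one prefers to avoid duality, the same conclusion should follow from an iterated van Kampen computation: starting from $\pi_1(\mathbb{T}^d)=\mathbb{Z}^d$ and deleting one $T_{ij}$ at a time, the meridian circle linking $T_{ij}$ represents the commutator $[e_i,e_j]$, so each deletion frees precisely that one relation and the net effect is the RAAG presentation. The difficulty in that route is that all the subtori $T_{ij}$ meet at the basepoint, so their tubular neighborhoods and meridians interact, and one must argue carefully (e.g.\ by general position after a small isotopy) that the deletions are independent.
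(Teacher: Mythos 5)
Your proposal is correct and follows essentially the same route as the paper: both exploit the duality between the cubical structure on $\mathbb{T}^d$ and the standard one-vertex CW structure, identifying the removed codimension-$2$ subtori $p(H_{ij})$ as dual to the commutator $2$-cells. The paper's version is terser (it only "punctures" the dual $2$-skeleton), whereas you carry the dual-complex deformation retraction to completion and recover the full homotopy equivalence of $T'_G$ with the Salvetti complex, which the paper asserts in the surrounding discussion but does not prove.
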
 \begin{proof} The 2-skeleton of $T_d$ can be taken to be
${d\choose 2}$ squares attached to $d$ edges (with a single vertex) to
form ${d\choose 2}$ tori, one for each pair of generators. We can
arrange this 2-skeleton so that the edges are dual to the images of
facets in our cube $I^d$ and so that the squares are dual to ridges. Then
the intersection of the 2-dimensional square and the codimension-2 ridge
is a single point. Thus, removing a ridge in $\mathbb{T}^d$ is
equivalent to puncturing one of the tori in the 2-skeleton, and thus it
corresponds to removing a commutator relation. Since the preimage of
such a ridge in $I^d$ consists of four separate ridges defined by the
equations above for $H_{ij}$, we are done. \end{proof}

Finally, we replace $I'_G$ and $T'_G$ with new cell complexes $I_G$ and
$T_G$ by taking a `truncated' version of both of them. In hyperbolic
geometry, truncating an ideal vertex of a right-angled polyhedron
replaces a deleted vertex of valence 4 with a square, where each corner
of the square intersects exactly one of the 4 edges that originally met
at the vertex. Similarly, we alter the fundamental domain $I'_G$ by
expanding each ideal cell into a facet (similar to the inflation process described in the previous section). The exact details are immaterial; all that we require is that the ideal set in the boundary has the same dimension as the boundary itself, with some cell structure.The manifold $T_G$ then is the quotient of $I_G$.

The exact details of this expansion or truncation are unimportant; it is
useful only because it implies that two non-ideal facets have non-trivial
intersection if and only if the corresponding generators are distinct
and commute. Almost any expansion would work as well. It does not change
the fundamental group.

\newtheorem*{BiggerTheorem}{Theorem \ref{BiggerTheorem}}
\begin{BiggerTheorem}
Every right-angled Artin group has a subdivision rule.
\end{BiggerTheorem}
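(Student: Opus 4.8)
The plan is to apply Theorem \ref{BigTheorem} directly to the space $T_G$ constructed above. By Theorem \ref{DomainTheorem} and the subsequent remarks, $T_G$ has fundamental group $G$ and is the quotient of the single fundamental domain $I_G$, the truncated version of $I^d \setminus \bigcup_{[e_i,e_j]\neq 1} H_{ij}$. So it suffices to check that $T_G$ is a right-angled manifold (with corners) whose fundamental domain $I_G$ is an abstract right-angled polytope meeting the regularity hypotheses of Theorem \ref{BigTheorem}; the subdivision rule for $G$ is then the one that theorem produces, acting on the spheres $S(n+1,0)$ in the universal cover.

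First I would verify that $I_G$ is an abstract right-angled polytope. Away from the truncated (ideal) region, $I_G$ inherits the cubical structure of $I^d$, so the link of each non-ideal vertex is a simplex and, in the quotient $\mathbb{T}^d$, becomes a $(d-1)$-orthoplex. Deleting the ridges $H_{ij}$ removes the corresponding faces from these links, and the truncation inserts ideal facets in their place, leaving precisely the sub-orthoplex links (halves and quarters of orthoplexes) allowed for a manifold with corners. The non-ideal facets $\{x_i=\pm 1\}$ glue in opposite pairs under the quotient map $p$ (one identified pair per generator $e_i$), the ridges glue in groups of four by the right-angled condition inherited from the cube, and the ideal facets produced by truncation are never identified, so $T_G$ is a compact manifold with boundary, i.e.\ a right-angled manifold with corners.

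The crux is verifying the regularity conditions: no $1$- or $2$-circuits and no prismatic $3$-circuits. Here I would use the defining feature of the truncation, namely that two non-ideal facets meet if and only if their generators are distinct and commute. Consequently any collection of pairwise-intersecting non-ideal facets corresponds to a clique of pairwise-commuting generators, hence to an honest cube in the Salvetti complex; by Lemma \ref{PolytopeLemma} those facets share a single common cell, so their pairwise ridges are never disjoint and no prismatic $3$-circuit (and no $2$-circuit) can form among non-ideal facets. For a circuit involving an ideal facet, the same cubical picture shows that the relevant ridges again meet in the truncated cell, so prismatic circuits are likewise excluded. I expect this step—tracking how the deletions and the truncation interact so as to keep every small circuit non-prismatic—to be the main obstacle, since it is the only place where the global combinatorics of the defining graph, rather than the purely local cube structure, enters.

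With $I_G$ confirmed as a right-angled polytope satisfying the hypotheses, Theorem \ref{BigTheorem} produces a finite subdivision rule for $T_G$, whose tile types are indexed by the facets of the inflation of $\partial I_G$ and whose subdivisions are given by complements of inflated stars. Since $\pi_1(T_G)=G$ and $T_G$ is compact, this is a finite subdivision rule for $G$, which completes the proof.
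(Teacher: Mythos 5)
Your proposal follows exactly the route the paper takes: apply Theorem \ref{BigTheorem} to the truncated complex $T_G$ with fundamental domain $I_G$, using Theorem \ref{DomainTheorem} to identify $\pi_1(T_G)$ with $G$. In fact you supply more detail than the paper, whose proof is the single sentence that the result ``follows immediately'' from those two results; your verification that $I_G$ is a right-angled polytope free of prismatic $3$-circuits (via the clique/commutation correspondence) is a reasonable filling-in of what the paper leaves implicit.
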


\begin{proof} This follows immediately from
Theorem \ref{BigTheorem} and Lemma \ref{DomainTheorem}. \end{proof}

Recall from Theorem \ref{BigTheorem} that there is one tile types for each non-ideal \emph{cell} on the boundary of the right-angled fundamental domain, and a finite number of ideal tile types (which never subdivide). Up to symmetry, each non-ideal cell of $\partial I_G$ corresponds to a clique in the defining graph $\Gamma$. To see this, note that each cell of codimension $k$ in $\partial I'_G$ (the precursor to $\partial I_G$) is defined by $k$ equations $x_{i_1}=\pm 1, x_{i_2}=\pm 1,...,x_{i_k}=\pm 1$. Varying the signs of the equation gives other cells that are equivalent up to symmetry. The cell defined by these equations is non-ideal if and only if the generators $g_{i_1},g_{i_2},...,g_{i_k}$ commute. Since the non-ideal cells of $\partial I'_G$ are the same as the non-ideal cells of $\partial I_G$, we see that cliques in $\Sigma$ are in 1-1 correspondence with the symmetry classes of cells of $\partial I_G$, and thus are in 1-1 correspondence with the tile types of the subdivision rule $R$ for the group $G$.

We need a small fact to describe the subdivision. Let $K,K'$ be non-ideal cells of $\partial I_G$ with $K\subseteq K'$, and let $\Sigma_K, \Sigma_{K'}$ be the cliques they correspond to. Then $\Sigma_{K'}\subseteq \Sigma_K$, as $K$ is `cut out' by more equations than $K'$, and thus corresponds to more generators than $K'$.

This helps us describe the subdivision of each tile type; from Theorem \ref{BigTheorem}, we see that each tile type corresponding to a cell $K$ is subdivided into the complement of its inflated star. The facets in the inflated star of a cell $K$ in $\partial I_G$ corresponding to a clique $\Sigma_K$ are in 1-1 correspondence with commuting sets of generators that all commute with every element of $\Sigma_K$.

We now describe all examples for $d=3$. The 3-torus described earlier is
such an example, and the tile types for its subdivision rule are shown
in Figure \ref{TorusSubdivision}. The other possible groups are
$F_2\times \mathbb{Z}$ (shown in Figure \ref{ProductSubdivision});
$F_3$ (shown in Figure \ref{FreeSubdivision}), where $F_n$ is the free
group on $n$ generators; and the free product of $\mathbb{Z}$ with $\mathbb{Z}^2$, shown in Figures \ref{FreeProductX} and \ref{FreeProductSubs}.

 \begin{figure}
\scalebox{.5}{\includegraphics{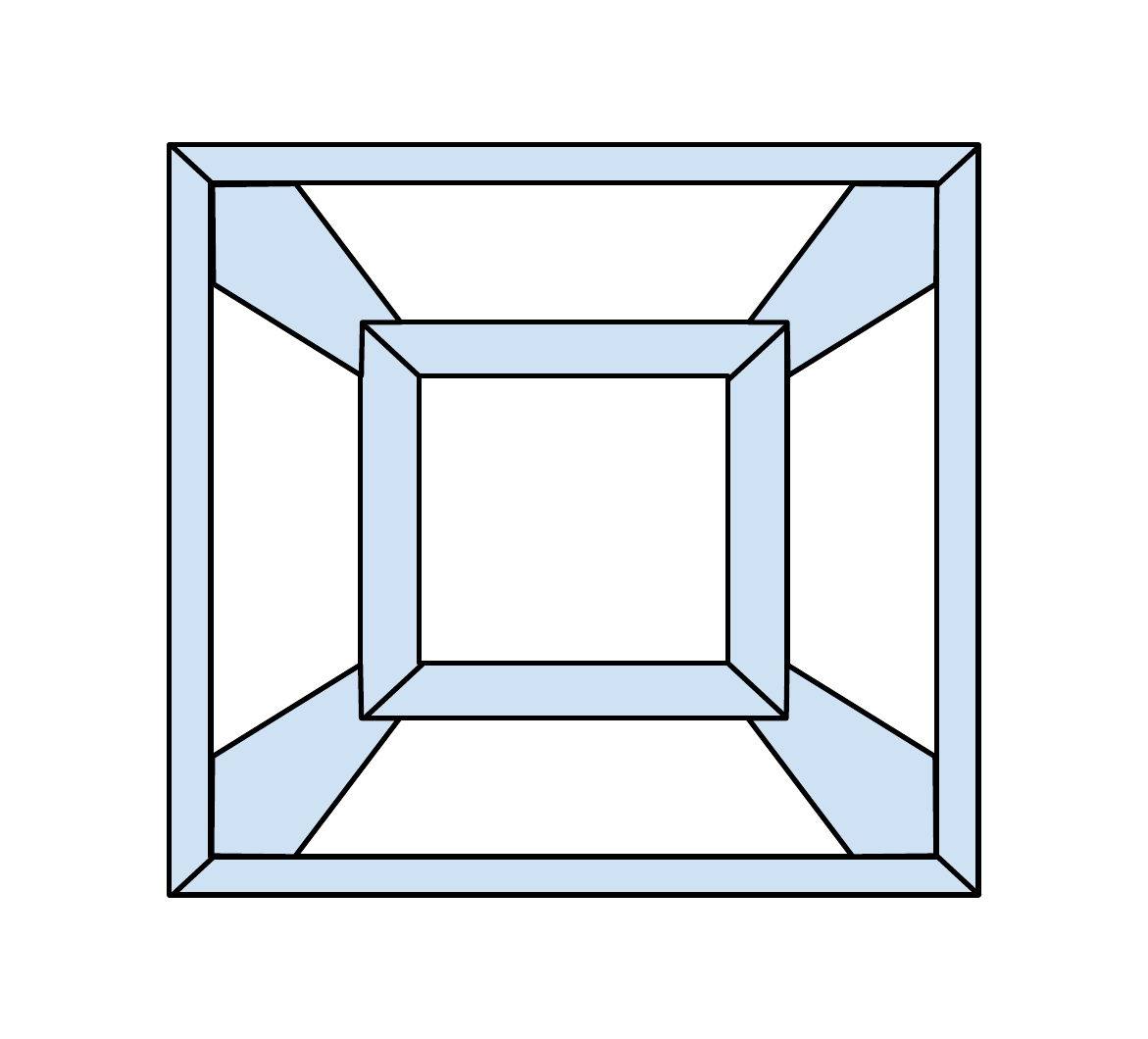}} \caption{The single tile type for the
subdivision rule of $F_3$. The complex $X$ that it acts on looks the
same as the tile type, except the outside is another
face.}\label{FreeSubdivision}\end{figure}

\begin{figure}
\scalebox{.4}{\includegraphics{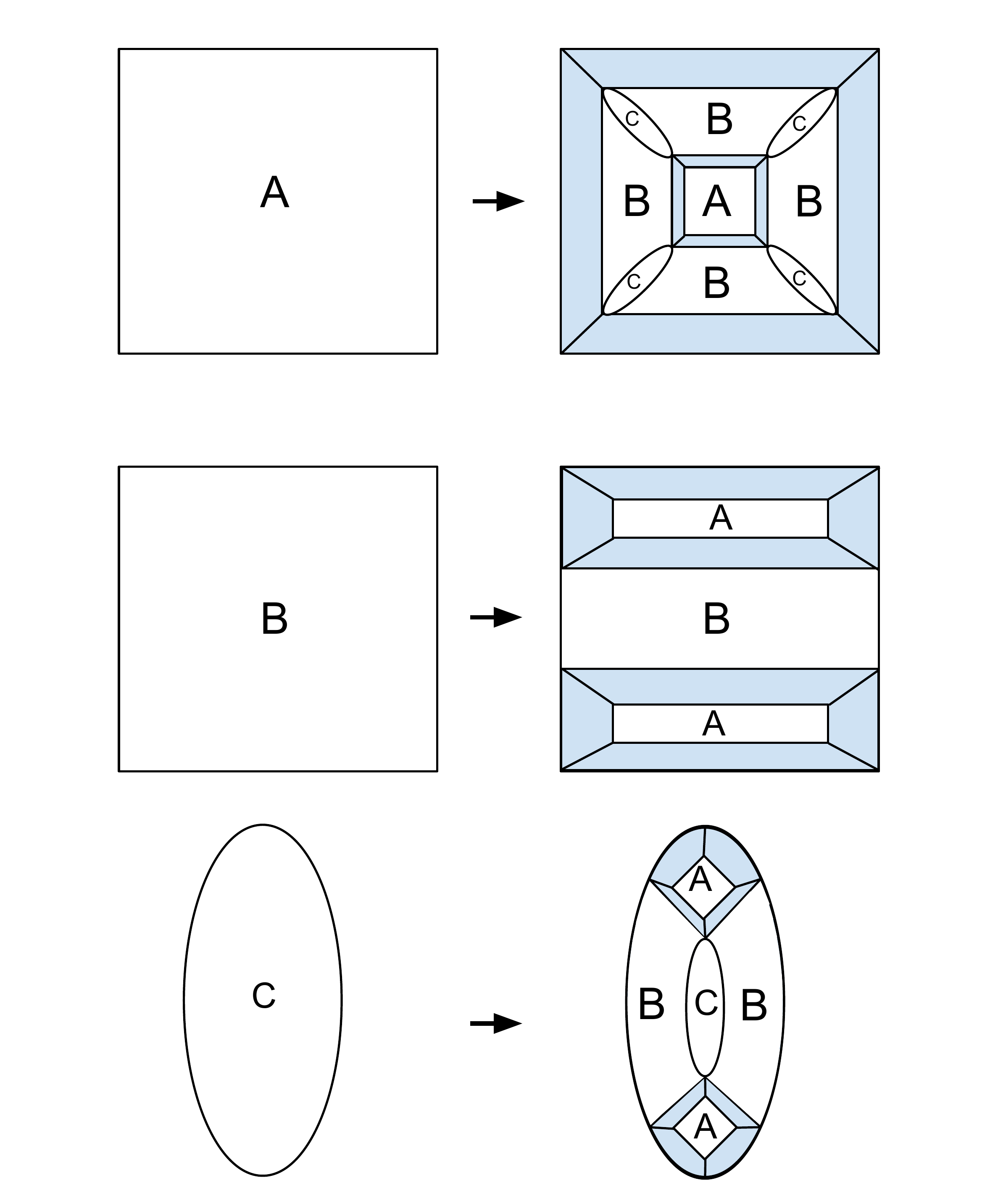}} \caption{The subdivision rule for the free product of $\mathbb{Z}$ with $\mathbb{Z}^2$.}\label{FreeProductSubs}\end{figure}

\begin{figure}
\scalebox{.5}{\includegraphics{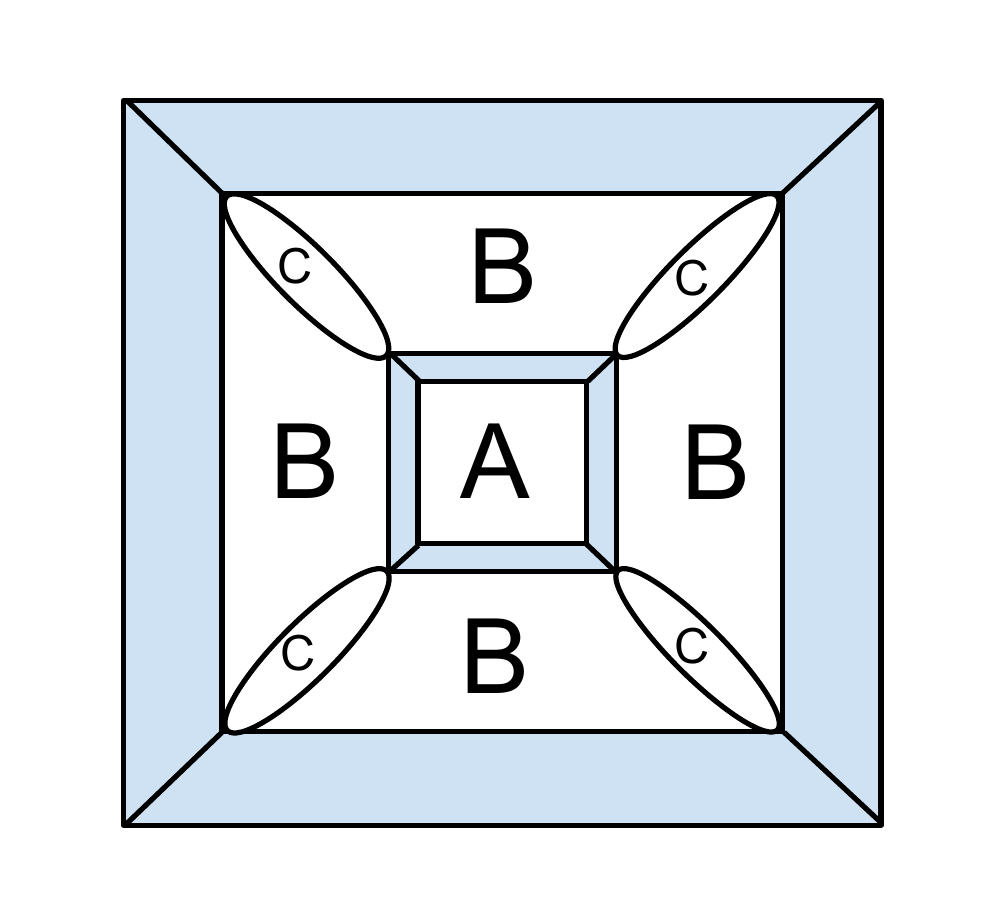}} \caption{The complex that the subdivision rule in Figure \ref{FreeProductSubs} acts on. The outside square is another $A$ tile.}\label{FreeProductX}\end{figure}

\section{Subdivision rules for special cubulated groups}\label{CubulatedSection}

To describe the subdivision rules for cubulated groups, we will need a few definitions.

Following Wise \cite{Wise}, we define a \textbf{cube complex} to be a cell complex obtained by gluing cubes together along facets by isometries. Since we are gluing by isometries, cube complexes have standard metrics. A \textbf{special cube complex} is a cube complex that is nonpositively curved in this metric, and that avoids certain pathologies in the gluing structure. For the purposes of this paper, it will be sufficient to use the following theorem due to Haglund and Wise \cite{haglund2008special}):

\begin{thm}
Let $X$ be a non positively curved cube complex. Then $X$ is special if and only if there is a local isometry of $X$ into the Salvetti complex for some right-angled Artin group $G$.
\end{thm}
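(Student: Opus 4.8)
The plan is to prove the two implications separately, treating the forward direction (existence of a local isometry implies special) as essentially formal and reserving the real work for the converse. For the forward direction I would first recall that every Salvetti complex $S(\Gamma)$ is itself special: its hyperplanes are in bijection with the generators of $G$, each is embedded and two-sided, and the absence of self-osculation and inter-osculation follows because two generating loops bound a square exactly when the corresponding generators commute, so crossing and osculating patterns are completely governed by $\Gamma$. I would then argue that a local isometry $\phi\colon X\to S(\Gamma)$ transports the special structure back to $X$: since $\phi$ is injective on links and carries each hyperplane of $X$ into a hyperplane of $S(\Gamma)$, any of the four pathologies in $X$ (self-intersection, one-sidedness, direct self-osculation, or inter-osculation) would produce the same pathology in $S(\Gamma)$, which has none. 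Hence $X$ is special.

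The converse is where the content lies, and the strategy is to manufacture the target right-angled Artin group and the map directly from the hyperplane combinatorics of $X$. Since $X$ is special, every hyperplane is two-sided and embedded, so each carries a well-defined transverse orientation and is unambiguously assigned a generator. First I would define the graph $\Gamma$ whose vertices are the hyperplanes of $X$ (finite in number when $X$ is compact, or replaced by parallelism classes in general) and whose edges record the pairs of hyperplanes that cross somewhere in $X$; this \emph{crossing graph} is the graph whose Salvetti complex $S(\Gamma)$ will receive the map. Next I would define a combinatorial map $\phi\colon X\to S(\Gamma)$ by collapsing every vertex of $X$ to the single vertex of $S(\Gamma)$, sending each oriented edge $e$ to the directed loop indexed by the hyperplane dual to $e$, and extending over cubes: the edges of a $k$-cube are dual to $k$ distinct, pairwise-crossing hyperplanes, hence to a $k$-clique of $\Gamma$, so the cube maps isometrically onto the corresponding $k$-cube of $S(\Gamma)$.

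The heart of the proof is verifying that $\phi$ is a local isometry, which by definition means that for each vertex $v$ the induced simplicial map of links $\mathrm{lk}(v,X)\to\mathrm{lk}(*,S(\Gamma))$ is injective with full image. The target link is the flag complex built on the doubled generating set $\{h^{\pm}\}$, in which $h^{\pm}$ and $k^{\pm}$ are adjacent exactly when $h$ and $k$ span an edge of $\Gamma$. Injectivity on vertices of $\mathrm{lk}(v,X)$ is precisely the \emph{no direct self-osculation} hypothesis: two edges at $v$ dual to the same hyperplane with the same transverse orientation would give distinct link-vertices with equal image, and that configuration is exactly a direct self-osculation. Fullness at the level of edges is precisely the \emph{no inter-osculation} hypothesis: if two corners at $v$ are non-adjacent in $\mathrm{lk}(v,X)$ (their edges osculate but span no square) while their dual hyperplanes cross elsewhere in $X$ (hence become adjacent in $\Gamma$), their images would be adjacent in the target, breaking fullness, and forbidding inter-osculation rules this out. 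Finally, because $X$ is nonpositively curved its vertex links are flag, so edge-fullness promotes to fullness on all simplices, matching the flag structure of the RAAG link; two-sidedness and the absence of self-intersection are used upstream to make the orientation and the hyperplane-to-generator assignment well defined.

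I expect the fullness verification — the no-inter-osculation step together with its promotion to higher simplices — to be the main obstacle, since it is the precise point where the global crossing relation recorded in $\Gamma$ must be reconciled simultaneously with the local square structure at every vertex, and where the subtlest pathology (a pair of hyperplanes that both cross and osculate) must be excluded. Everything else is comparatively routine: finiteness of $\Gamma$ follows from $X$ having finitely many hyperplanes, well-definedness of $\phi$ on cubes follows because the hyperplanes through a cube are pairwise crossing and hence form a clique, and the forward implication follows formally once one knows that $S(\Gamma)$ is special and that local isometries transport hyperplane pathologies.
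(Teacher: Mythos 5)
The paper does not prove this statement: it is quoted verbatim as a theorem of Haglund and Wise (\cite{haglund2008special}) and used as a black box in Section \ref{CubulatedSection}, so there is no internal argument to compare yours against. Your proposal is a correct reconstruction of the standard proof from that reference: build the crossing graph $\Gamma$ of hyperplanes, map $X$ to $S(\Gamma)$ by sending each edge to the loop of its dual hyperplane, and check that the link maps are injective with full image, with the four specialness conditions supplying exactly the needed properties; the converse direction via pulling pathologies back along a local isometry is likewise standard. One small bookkeeping point: injectivity of the link map at a vertex $v$ uses \emph{both} the absence of self-intersection and the absence of direct self-osculation, since two distinct edges at $v$ dual to the same hyperplane with the same co-orientation either span a square (a self-crossing) or do not (a direct self-osculation), and both configurations must be excluded; you assign the first of these roles to a vaguer ``upstream'' use. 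Similarly, in the forward direction an inter-osculating pair in $X$ whose hyperplanes map into a single hyperplane of $S(\Gamma)$ yields a self-pathology rather than an inter-osculation there, which is harmless since $S(\Gamma)$ has no pathologies of any kind, but is worth stating. Neither point is a genuine gap.
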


In this section, we will also use a specific fact about the structure of right-angled Artin groups, as described by Charney \cite{Charney}. We use slightly different notation from \cite{Charney}. Given a right-angled Artin group $G$, a \textbf{spherical set} $\Sigma$ is a mutually commuting set of standard generators $h_1,...,h_k$  of $G$, where no generator appears with its inverse. A \textbf{spherical subgroup} $\langle \Sigma \rangle$ is the subgroup generated by the elements of $\Sigma$, together with their inverses.  Thus, two distinct spherical sets can generate the same spherical subgroup.

Each word $g=g_1g_2...g_n$ of generators of a $RAAG$ can be decomposed uniquely into sub words $w_i$ such that:

\begin{enumerate}
\item each word $w_i$ lies in a spherical subgroup $\langle \Sigma_i\rangle$, and
\item each word $w_i$ is \emph{maximal} with respect to property 1 (i.e. is the word of longest length satisfying property 1).
\end{enumerate}

Although this decomposition is unique,  each subword $w_i$ has multiple representatives. We can fix a unique representative of each subword by introducing diagonal generators:

\begin{defi}
Given a spherical set $\Sigma=\{g_1,...,g_n\}$, the \textbf{diagonal generator} $t_\Sigma$ is the product $g_1g_2\cdots g_n$. We say that a diagonal generator $t_\Sigma$ is \textbf{subordinate} to another generator $t_{\Sigma^\prime}$ if $\Sigma \subseteq \Sigma^\prime$.
\end{defi}

Using diagonal generators, we can represent each $w_i$ by a word $t_{\Sigma_k}\cdots t_{\Sigma_2}t_{\Sigma_1}$, where each $\Sigma_j$ is subordinate to $\Sigma_{j-1}$. This representation is now unique; for instance, in $\mathbb{Z}^3$, the word $a^5b^{-2}c^3$ decomposes as $t_a t_a t_a t_{ac}t_{ac}t_{ab^{-1}c}t_{ab^{-1}c}$. Thus, every word in a general RAAG can now be given a standard representative.

We will also need the definition of a cone type of a graph:

\begin{defi}
Let $\Gamma$ be a graph with a fixed origin $\mathcal{O}$. A \textbf{cone} of $\Gamma$ is the set $\Gamma_v$ of all points in $\Gamma$ that can be reached by a geodesic segment beginning at $\mathcal{O}$ and passing through $v$. We say that two vertices $v,w$ have the same \textbf{cone type} if the cones $\Gamma_v$ and $\Gamma_w$ are isometric by an isometry sending $v$ to $w$.
\end{defi}

In a finite subdivision rule, the tile type of a non-ideal tile is determined by the cone type of vertex in the history graph that it corresponds to.

\newtheorem*{BiggestTheorem}{Theorem \ref{BiggestTheorem}}
\begin{BiggestTheorem}
The fundamental group of a compact special cube complex $Y$
has a finite subdivision rule. If there is a local isometry of $Y$ into a Salvetti complex of a RAAG $A$, then the subdivision rule for $A$ contains a copy of the subdivision rule for $Y$.
\end{BiggestTheorem}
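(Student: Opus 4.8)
The plan is to leverage the local isometry $\iota\colon Y\to \mathrm{Sal}(A)$ into the Salvetti complex, together with the subdivision rule for the RAAG $A$ that was already constructed via Theorem \ref{BigTheorem}. The key observation is that a local isometry of nonpositively curved cube complexes lifts to an isometric embedding $\tilde\iota\colon \tilde Y\hookrightarrow \tilde{\mathrm{Sal}}(A)$ of universal covers, and that this embedding is \emph{convex} in the CAT(0) metric: $\tilde Y$ sits inside $\tilde{\mathrm{Sal}}(A)$ as a convex subcomplex. Consequently the wall structure (hyperplane structure) on $\tilde Y$ is precisely the restriction of the wall structure on $\tilde{\mathrm{Sal}}(A)$ — each wall of $\tilde Y$ is the intersection of a wall of the RAAG with the convex subcomplex. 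Since the balls $B(i,j)$ and their boundary spheres $S(i,j)$ in the construction of Section \ref{MethodSection} are built entirely from the wall/fundamental-domain combinatorics, the entire machinery of gluing, collapse, concave stars, and inflation restricts from the ambient RAAG to $\tilde Y$.

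First I would make precise the statement that $\tilde Y$ is a convex subcomplex, invoking Haglund--Wise: the local isometry guarantees that $\pi_1(Y)$ embeds in $A$ and that $\tilde Y$ is the convex hull of an orbit. Second, I would verify that the standing assumptions (1)--(5) preceding Lemma \ref{CoLemma} continue to hold when we run the $B(i,j)$ construction inside $\tilde Y$ rather than in all of $\tilde{\mathrm{Sal}}(A)$: the spheres $S(i,j)$ remain topological spheres (or balls-with-ideal-boundary) because convexity ensures no new concave ridges or illegal circuits are introduced by the restriction, and the ideal facets are exactly the facets of $Y$-cubes that abut the complement of $\tilde Y$. Third, having checked these hypotheses, Theorem \ref{BigTheorem} applies verbatim to produce a finite subdivision rule for $\pi_1(Y)$, whose tile types correspond to the non-ideal cells (inflated) on the boundary of the cubes of $Y$, exactly as in the RAAG case. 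Finiteness follows because $Y$ is compact, so there are finitely many cube types and hence finitely many cone types in the history graph.

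Finally, for the containment statement, I would observe that because the walls of $\tilde Y$ are restrictions of walls of $\tilde{\mathrm{Sal}}(A)$, each tile type of the subdivision rule for $Y$ is literally one of the tile types of the subdivision rule for $A$ (the inflated star of a cell restricts to the inflated star computed inside the convex subcomplex, which is a sub-star of the ambient one). The subdivision of such a tile — the complement of its inflated star — is then the restriction of the corresponding RAAG subdivision, so the subdivision rule for $Y$ embeds as a sub-collection of tile types closed under subdivision inside the subdivision rule for $A$.

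The main obstacle I anticipate is Step two: carefully verifying that restricting to the convex subcomplex $\tilde Y$ does not create concave regions that fail to be concave $k$-stars, or flat ridges outside the intersections of concave regions, i.e.\ that standing assumptions (2)--(4) survive the restriction. The worry is that the boundary facets of $Y$ (which become \emph{ideal} facets) interact with the inflation and collapse operations in new ways, particularly where an ideal ridge of $Y$ meets a genuine wall. I would handle this by a link-of-vertex analysis analogous to Lemma \ref{CoherenceInduction}: convexity of $\tilde Y$ forces the link of each codimension-3 cell to be a \emph{full-dimensional convex} subcomplex of the orthoplex link in $\tilde{\mathrm{Sal}}(A)$, and the enumeration in Figure \ref{ConvexSubsets} (together with its ideal analogues) shows that flat vertices still occur in opposite pairs, so coherence and the concave-star structure are inherited.
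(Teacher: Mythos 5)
Your starting point --- that the local isometry lifts to an embedding of $\widetilde Y$ as a convex subcomplex of the universal cover of the Salvetti complex --- is exactly the key fact the paper uses. But the way you propose to exploit it contains a genuine gap. You want to re-run the $B(i,j)$ construction \emph{inside} $\widetilde Y$, verify the standing assumptions for the restricted wall structure, and then apply Theorem \ref{BigTheorem} ``verbatim.'' Theorem \ref{BigTheorem}, however, is a statement about right-angled \emph{manifolds} with polytopal fundamental domains; a compact special cube complex $Y$ is not a manifold in general, $\widetilde Y$ need not be one either, and the boundaries $S(i,j)$ of balls built intrinsically in $\widetilde Y$ need not be spheres. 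So the hypotheses you need in your second step simply fail, and no link-of-vertex analysis will restore them. The paper avoids this entirely: it never rebuilds anything inside $\widetilde Y$. Instead it keeps the subdivision rule $R$ already constructed for the ambient RAAG, acting on the ambient sphere, and defines the subdivision rule for $\pi_1(Y)$ by declaring ideal every tile of $R^n(X)$ whose corresponding basepoint lift does not lie in $\widetilde Y$. The only thing to check is that this labeling is consistent, i.e.\ that non-ideal tiles have non-ideal ancestors; this is the star-convexity of the induced subgraph $H$ of the history graph, proved by writing $h\in\pi_1(Y)$ in its normal form $t_1t_2\cdots t_n$ of diagonal generators, observing that the predecessor $\hat h=t_2\cdots t_n$ lies on a CAT(0) geodesic from $\tilde b_0$ to $h\tilde b_0$, and invoking convexity of $\widetilde Y$. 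With this formulation the containment claim of Theorem \ref{BiggestTheorem} is automatic, rather than requiring your restriction-of-inflated-stars argument.

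The second gap is finiteness. You assert that compactness of $Y$ gives finitely many cube types ``and hence finitely many cone types in the history graph.'' That implication is not immediate: cone types are measured from a fixed origin, which the $\pi_1(Y)$-action does not preserve, so cocompactness alone does not bound the number of cone types. The paper's argument needs two ingredients: first, star-convexity of $H$ gives $cone_H(h)=cone_K(h)\cap H$, where $K$ is the ambient history graph; second, the cone type of $h$ in $H$ is then determined by the pair consisting of the cone type of $h$ in $K$ and the $\pi_1(Y)$-orbit of $h\tilde b_0$, and each factor ranges over a finite set --- the first because $R$ is a finite subdivision rule, the second by cocompactness of the action on $\widetilde Y$. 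You would need to supply an argument of this kind; ``finitely many cube types'' by itself does not close the loop.
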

 \begin{proof} The
special cube complex $Y$ has a local isometry into the Salvetti complex
$X$ for a RAAG $\pi_1(X)$. This extends to a local isometry
$\widetilde Y \hookrightarrow \widetilde X$. The lifts in $\widetilde X$
of the basepoint $b_0$ of $X$ are in 1-to-1 correspondence with elements
of $\pi_1(X)$, and can be written as the set $\{g\tilde b_0 |g\in
\pi_1(X)\}$.

Now, we consider the lifts of $b_0$ that lie in $\widetilde
Y$. The set $\widetilde Y$ is convex (being a CAT(0) subcomplex). Recall that every element of a right-angled Artin group can be written uniquely as a product of maximal diagonal generators.
Given an element $h\in \pi_1(Y)\subseteq \pi_1(X)$, let $t_1t_2...t_n$ be its decomposition into diagonal generators, and let $\hat h$ be the
element of $\pi_1(X)$ given by $\hat h=t_2...t_n$. Then $\hat h\tilde b_0$ is a lift of $b_0$ in
$\widetilde X$, but we do not yet know if it lies in $\widetilde Y$. However, there is
clearly a geodesic in $\widetilde X$ (with its CAT(0) metric) from $\tilde b_0$ to $h \tilde b_0$
going through $\hat h \tilde b_0$. Because $\widetilde Y$ is convex, the
point $\hat h \tilde b_0$ must lie in $\widetilde Y$.

Now, consider the history graph $K$ of the subdivision rule $R$
corresponding to the RAAG $\pi_1(X)$. The vertices of $K$ are the
vertices of the Cayley graph of $\pi_1(X)$. Consider the induced
subgraph $H$ whose vertices are only those group elements $g$ of $\pi_1(X)$ where
the lifts $g\tilde b_0$ of the basepoint lie in the subcomplex
$\widetilde Y$. By the preceding  portion of the proof, if an element
$h$ of $\pi_1(X)$ is in $\pi_1(Y)$, then its predecessor in the history
graph $\hat h$ lies in $\widetilde Y$. Thus, the induced subgraph $H$ is
star convex at the origin and can be interpreted as the history graph of
a subdivision rule $S$ obtained by labeling as ideal all facets of
$R^n(X)$ except those that correspond to vertices of $H$ (Note: if it were not star convex, then some `ideal' facets would subdivide into non-ideal facets, which is not allowed).

Thus, we have a subdivision rule $S$, but we need to show that it is
finite. This is the same as showing that the history graph $H$ has
finitely many cone types. Now, the history graph $K$ of $R$ has
finitely many cone types. Let $h$ be a vertex of $H$. Then
$cone_H(h)=cone_K(h) \cap H$. This is true because $H$ is star convex
and an induced subgraph, so given any point $a$ of $cone_K(h) \cap H$,
there is a geodesic in $H$ from the origin to $a$ which is a geodesic in
$K$, meaning that $cone_K(h) \cap H$ lies in $cone_H(h)$. The reverse
inclusion is clear.

Finally, note that the action of $\pi_1(Y)$ on $\widetilde Y$ is
cocompact, implying that, under the action of $\pi_1(Y)$, there are only
finitely many equivalence classes of basepoint lifts $h \tilde
b_0$. On the other hand, since $S$ is a finite subdivision rule, there
are only finitely many cone types of vertices in $K$. So, given a vertex
$h$ in $H$ corresponding to a lifted basepoint $h\tilde b_0$, there are
only finitely many possibilities for its equivalence class under the
$\pi_1(Y)$ action and its cone type in $K$. Fix representatives $\tilde
b_1,...,\tilde b_n$ of each possible combination of cone type and equivalence class under the $\pi_1(Y)$ action. Then given any other
vertex $h'$ in $H$ corresponding to a lifted basepoint $h'
\tilde b_0$, there is some $\tilde b_i$ which has the same cone type in
$K$ and which is sent to $h'\tilde b_0$ by the action of some element
$y$ of $\pi_1(Y)$.  This implies that $cone_H(h')=cone_K(h') \cap
H=cone_K(yh) \cap H=cone_K(yh) \cap H\approx cone_K(h) \cap
y^{-1}H=cone_K(h) \cap H=cone_H(h)$. Thus, the two cone types in $H$ are
the same, and there are only finitely many cone types, which means that
$S$ is a finite subdivision rule.
\end{proof}

\section{Quasi-isometry properties of subdivision rules}\label{QuasiProofs}

In this section, we present the proofs of Theorems \ref{Quasi1Thm}-\ref{Quasi4Thm}.

Recall from the Introduction that the history graph of each subdivision rule is quasi-isometric to the group it is associated to. This means that subdivision rules can be used to study quasi-isometry
properties of these groups.

\begin{defi}
Let $X,Y$ be metric spaces with metrics $d_1,d_2$. Then a function $f:X\rightarrow Y$ is a \textbf{quasi-isometric embedding} if there is a constant $C$ such that for all $x_1,x_2$ in $X$, $$\frac{1}{C}d_2(f(x_1),f(x_2))-C\leq d_1(x_1,x_2) \leq C d_2(f(x_1),f(x_2)+C.$$ A quasi-isometric embedding is a \textbf{quasi-isometry} if every point of $Y$ is within some fixed distance $D$ of the image of $X$ under $f$.
\end{defi}

Quasi-isometric spaces have the same `large scale' structure. The following theorems all deal with properties that are invariant under quasi-isometry.

\begin{defi}
A \textbf{growth function} for a group $G$ is a function $g:\mathbb{N}\rightarrow\mathbb{N}$ such that $g(n)$ is the number of elements of $G$ of distance $n$ from the origin in the word metric with some finite generating set.
\end{defi}

Since growth functions depend on the generating set, they are not unique. However, the degree of growth (polynomial of degree $d$, exponential, etc.) is a quasi-isometry invariant. 

\newtheorem*{Quasi1Thm}{Theorem \ref{Quasi1Thm}}
\begin{Quasi1Thm}
The growth function of $G$ is the number of non-ideal cells in $R^n(X)$.
\end{Quasi1Thm}
\begin{proof}
By construction, the non-ideal cells of $R^n(X)$ are in 1-1 correspondence with the vertices in the sphere of radius $n$ in the history graph $\Gamma$, which is quasi-isometric to $G$.
\end{proof}

\newtheorem*{Quasi2Thm}{Theorem \ref{Quasi2Thm}}
\begin{Quasi2Thm}
If the subdivision rule $R$ has mesh approaching 0 (meaning that each path crossing non-ideal tiles eventually gets subdivided), then the group $G$ is $\delta$-hyperbolic.
\end{Quasi2Thm}

\begin{proof}
This is Theorem 6 of \cite{myself2}.
\end{proof}

\begin{defi} Let $X$ be a metric space. Then an \textbf{end} of $X$ is a sequence  $E_1\subseteq E_2 \subseteq E_3\subseteq...$ such that each $E_i$ is a component of $X\setminus B(0,n)$, the complement in $X$ of a ball of radius $n$ about the origin.

If $G$ is a group, then an \textbf{end} of $G$ is an end of a Cayley graph for $G$ with the word metric.
\end{defi}

\newtheorem*{Quasi3Thm}{Theorem \ref{Quasi3Thm}}
\begin{Quasi3Thm}
The number of ends of $G$ is the same as the number of components of $X\setminus \bigcup R_I^n(X)$, where $R^n_I$ is the union of the ideal tiles of $R^n(X)$.
\end{Quasi3Thm}

\begin{proof}
Let $A_n=X\setminus R_I^n(X)$, and let $A=\bigcap A_n=X\setminus \bigcup R_I^n(X)$. Let $B$ be a component of $A$. Then $B$ is contained in a component $B_n$ of $A_n$. The set $B_n$ is connected and locally path connected, so it is path connected. Now, given $x$ and $y$ in $A$, we can find a path $\alpha_n$ in $B_n$ between $x$ and $y$ for each $n$. In the history graph $\Gamma$, the points $x$ and $y$ correspond to geodesic rays from the origin, and the paths $\alpha_n$ connecting them correspond to paths in the $n$-sphere in $\Gamma$, which lies outside the ball $B(0,n)$ of radius $n$ about the origin. Thus, the rays corresponding to $x$ and $y$ are connected by paths lying outside of $B(0,m)$ for any $m$, so are in the same end of $\Gamma$.

Thus, all rays corresponding to points of $B$ are in the same end of $\Gamma$.

On the other hand, let $x$ be in one component $B$ of $A$ and $y$ in another component $C$. Then there is some $n$ such that $B_n\cap C_n=\emptyset$. But this means that there is no path in $\Gamma\setminus B(0,n)$ from the ray representing $x$ to the ray representing $y$, since any such path would imply the existence of a similar path in the sphere of radius $n$ in $\Gamma$, which is dual to $A_n$. (This path can be obtained explicitly by pushing every vertex in $\alpha$ to its predecessor in the sphere of radius $n$, and sending any edge in the path to the corresponding edge between the projection of its two vertices). Thus, the two geodesics lie in different ends of $\Gamma$.
\end{proof}

Finally, subdivision rules can be used to study divergence. There are several definitions; we use the definition given in \cite{behrstock2012divergence}:

\begin{defi}
 The \textbf{divergence of a geodesic ray} $\gamma$ in a metric space $X$ is the function of $n$ given by the length of the shortest path in $X\setminus B(\gamma(0),n)$ between $\gamma(-n)$ and $\gamma(n)$. The \textbf{divergence} of a group is the supremum of the divergence of all geodesics in the Cayley graph of $G$. \end{defi}

In many spaces, the distance between the endpoints is given by a path lying on
the surface of the sphere. This is true for history graphs, as we will show in Theorem \ref{Quasi4Thm}. So, the divergence is essentially the diameter
of the sphere of radius $n$ as a metric space with the induced metric.
The subdivision rules described in this paper give an exact description
of the cell structure of the sphere of radius $n$, and the actual metric
on the sphere of radius $n$ is quasi-isometric to the `chunky metric'
given by counting the minimum number of cells that a path between two
points must intersect, and the quasi-isometry constants are independent
of $n$. Thus, we can just find the diameter of the sphere in this
`chunky metric'. For instance, in the subdivision rules in Figure
\ref{TorusSubdivision} for the 3-torus, we can take two points $a,b$ in
the subdivision complex $X$ that are in the intersection of infinitely
many $C$-tiles. Then the length of any path between $a$ and $b$ in
$R^n(X)$ grows linearly with $n$, which is what is expected for this
manifold \cite{behrstock2012divergence}. The same holds true for the
subdivision in Figure \ref{ProductSubdivision}. In the subdivision rule
shown in Figure \ref{FreeSubdivision}, there is no path between distinct
components, which is expected, as this is a free group. The same holds for the subdivision depicted in Figures \ref{FreeProductSubs} and \ref{FreeProductX}.

In general, divergence is not a quasi-isometry invariant, but it becomes so under a new equivalence relation. If $f(n),g(n)$ are two divergence functions, we write $f(n)\preceq g(n)$ if $f(n)\leq g(an+b)+an+b$ for some constants $a,b$. If $f(n)\preceq g(n)$ and $g(n) \preceq f(n)$, we say that $f$ and $g$ are \textbf{equivalent}.

\newtheorem*{Quasi4Thm}{Theorem \ref{Quasi4Thm}}
\begin{Quasi4Thm}
If $R$ is a subdivision rule acting on a space $X$ associated to a group $G$, then the diameter $diam_X(n)$ of $R^n(X)$ is an upper bound to the divergence of $G$, i.e. $div_G(n)\preceq diam_X(n)$. Conversely, if is a geodesics $\gamma$ in the history graph of $R$ realizing $diam_X(n)$ (i.e. with $d_{R^n(X)}(\gamma(n),\gamma(-n))=diam_X(n)$)) then $diam_X(n)\preceq div_G(n)$.
\end{Quasi4Thm}

\begin{proof}
The history graph $\Gamma$ of $\{R^n(X)\}$ is quasi-isometric to the Cayley graph $C=C(G)$; in fact, there is a fixed $K>0$ such that for all $y$ in $C$, the history graph $\Gamma$ is $K$-quasi-isometric to $G$ by a quasi-isometry taking the origin in $\Gamma$ to $y$ in $C$. This implies that the divergence of $C$ is equivalent to the divergence of $\Gamma$ measured from the origin.

Thus, we only need to bound the divergence of $\Gamma$ measured from the origin. Look at $\Gamma\setminus B(0,n)$, and choose two points $x,y$ at distance $n$ from the origin. All such points lie in the dual graph of $R^n(X)$, which we call $\Gamma_n$. We need to measure the minimum distance of a path between $x$ and $y$ that avoids the ball of distance $n$; such a path is called an \textbf{avoidant path}. We claim that any minimal avoidant path $\alpha$ from $x$ to $y$ lies completely in $\Gamma_n$. This is because we can canonically `project' $\alpha$ avoiding $B(0,n)$ onto $\Gamma_n$ by replacing each vertex in the path with its unique ancestor in $\Gamma_n$, and replacing each edge between two vertices with the edge between the projections of its endpoints. This projection collapses all vertical edges in $\alpha$ and collapses all horizontal edges whose endpoints have a common projection. Since every path that is not yet in $\Gamma_n$ must have vertical edges, projection will shorten all paths except those that lie in $\Gamma_n$. Thus, the minimal avoidant path $\alpha$ lies in $\Gamma_n$.

Thus, the length of any avoidant path between two geodesics rays is bounded above by the diameter of $\Gamma_n$, which is $diam_X(n)$, so the length of an avoidant path connecting the two halves of a bi-infinite geodesic is also bounded above. This proves the first statement of the theorem.

On the other hand, if $\gamma$  is a geodesic realizing the diameter, its divergence is given by the diameter of $\Gamma_n$. Because $div_{\Gamma}(n)$ is a supremum, it is bounded below by the divergence of $\gamma$, and thus by $diam_X(n)$. This proves the second statement of the theorem.
\end{proof}

\bibliographystyle{plain} \bibliography{RAAGbib}

\end{document}